\tikzset{main_node/.style={circle,draw=black,fill=black,minimum size=5.5pt,inner sep=0pt]}}
\tikzset{every picture/.style={line width=0.7pt}}
\tikzset{highlight/.style={line width=2pt,red}}
\g@addto@macro\bfseries{\boldmath}
\newtheorem{theorem}{Theorem}
\newtheorem{corollary}[theorem]{Corollary}
\newtheorem{lemma}[theorem]{Lemma}
\newtheorem{proposition}[theorem]{Proposition}
\theoremstyle{remark}
\newtheorem{example}[theorem]{Example}
\newtheorem{remark}[theorem]{Remark}
\numberwithin{theorem}{section}
\DeclareMathOperator{\diag}{diag}
\DeclareMathOperator{\im}{im}
\DeclareMathOperator{\Star}{Star}
\DeclarePairedDelimiter{\abs}{\lvert}{\rvert}
\newcommand{\vd}{\mathbf{d}}
\newcommand{\vr}{\mathbf{r}}
\newcommand{\ZZ}{\mathbb{Z}}
\begin{document}

\title[Generalized chip firing and critical groups associated to trees]{Generalized chip firing and critical groups\\ of arithmetical structures on trees}

\author{Kassie Archer}
\address[K.~Archer]{Department of Mathematics\\ United States Naval Academy\\ Annapolis, MD 21402\\ USA \newline Disclaimer: The views expressed in this paper are those of the authors and do not reflect the official policy or position of the U.S. Naval Academy, Department of the Navy, the Department of Defense, or the U.S. Government.}
\email{karcher@usna.edu}

\author{Alexander Diaz-Lopez}
\address[A.~Diaz-Lopez]{Department of Mathematics and Statistics\\ Villanova University\\ 800 Lancaster Ave (SAC 305)\\ Villanova, PA 19085\\ USA}
\email{alexander.diaz-lopez@villanova.edu}

\author{Darren Glass}
\address[D.~Glass]{Department of Analytics\\ Dickinson College\\ 28 N. College St.\\ Carlisle, PA 17013\\ USA}
\email{glassd@dickinson.edu}

\author{Joel Louwsma}
\address[J.~Louwsma]{Department of Mathematics\\ Niagara University\\ Niagara University, NY 14109\\ USA}
\email{jlouwsma@niagara.edu}

\begin{abstract}
Chip firing provides a way to study the sandpile group (also known as the Jacobian) of a graph. We use a generalized version of chip firing to bound the number of invariant factors of the critical group of an arithmetical structure on a graph. We also show that, under suitable hypotheses, critical groups are additive under wedge sums of graphs with arithmetical structures. These results allow us to relate the number of invariant factors of critical groups associated to any given tree to decompositions of the tree into simpler trees. We use this to classify those trees for which every arithmetical structure has cyclic critical group. Finally, we show how to construct arithmetical structures on trees with prescribed critical groups. In particular, every finite abelian group is realized as the critical group of some arithmetical structure on a tree. 
\end{abstract}

\maketitle

\section{Introduction}

Let $G$ be a finite, connected graph without loops, and let $A(G) = (a_{v,w})_{v,w \in V(G)}$ be the adjacency matrix of~$G$. An \emph{arithmetical structure} on~$G$ is a pair $(\vd,\vr)$ of vectors indexed by the vertices of~$G$ such that $\vd = (\vd(v))_{v \in V(G)}$ has nonnegative integer entries, $\vr = (\vr(v))_{v \in V(G)}$ has positive integer entries with no nontrivial common factor in~$\ZZ$, and
\begin{equation}\label{eq:arithmeticalstructure}
(\diag(\vd)-A(G))\vr = \mathbf{0}.
\end{equation}
Equivalently, arithmetical structures can be defined as positive integer labels $\{\vr(v)\}_{v \in V(G)}$ on the vertices of~$G$ such that for each vertex~$v$ we have 
\[\vd(v)\vr(v) = \sum_{w \in V(G)}a_{v,w}\vr(w)\] 
for some $\vd(v) \in \ZZ_{\geq 0}$. In other words, at each vertex, the $\vr$-label there divides the sum of the $\vr$-labels at adjacent vertices, counted with multiplicity. Arithmetical structures were originally introduced by Lorenzini~\cite{L89} to study intersections of degenerating curves in algebraic geometry, but in recent years they have also been studied as combinatorial objects. 

The matrix $\diag(\vd)-A(G)$ in Equation~\eqref{eq:arithmeticalstructure} is called the \emph{generalized Laplacian matrix} of the arithmetical structure and is denoted $L(G,\vd)$. This matrix has rank $\abs{V(G)}-1$, as shown in \cite[Proposition~1.1]{L89}. To each arithmetical structure, there is an associated \emph{critical group} $\mathcal{K}(G;\vd,\vr)$ that is isomorphic to the torsion part of the cokernel of $L(G,\vd)$, i.e.,
\[\ZZ^{V(G)}/\im(L(G,\vd)) \cong \ZZ \oplus \mathcal{K}(G;\vd,\vr).\]
In the special case when $\vr(v)=1$ and $\vd(v)=\deg(v)$ for all $v \in V(G)$, the arithmetical structure $(\vd,\vr)$ is called the \emph{Laplacian arithmetical structure}. Many authors (e.g., Biggs~\cite{Biggs}) use the term \emph{critical group of a graph} to refer specifically to the critical group of the Laplacian arithmetical structure; this group also has several other names in the literature, including the sandpile group~\cite{Dhar}, the Jacobian~\cite{BN}, and the Picard group~\cite{BHN}. For more details about these notions, see~\cite{CP} and the references therein. 

The critical group of an arithmetical structure can be computed via the Smith normal form of its generalized Laplacian matrix. Specifically, $\mathcal{K}(G;\vd,\vr) \cong \bigoplus_{i=1}^{n-1} \ZZ/\alpha_i\ZZ$, where $\alpha_i$ is the $i$-th diagonal entry of the Smith normal form of $L(G,\vd)$. It follows from results of Raynaud~\cite{Raynaud} that the critical group is isomorphic to the group of components of the N\'{e}ron model of the Jacobian associated to the generic curve. Lorenzini further discusses the geometric motivation for the study of critical groups of arithmetical structures in~\cite{L90}.

For a tree~$T$, a result of Lorenzini \cite[Corollary~2.5]{L89} gives the order of the critical group of an arithmetical structure in terms of~$\vr$ and the degrees of the vertices. Specifically, 
\[\abs{\mathcal{K}(T;\vd,\vr)} = \prod_{v \in V(T)} \vr(v)^{\deg(v)-2}.\]
However, the overall structure of the critical group is generally not clear, and there are many open questions in this direction. For example, among graphs with a fixed number of vertices, which groups arise as the critical group of an arithmetical structure on one of these graphs? What about for graphs with a fixed number of edges? For a given finite abelian group~$\mathcal{G}$, what is the smallest graph that admits an arithmetical structure with $\mathcal{G}$ as critical group? Which graphs only admit arithmetical structures with cyclic critical groups? There are some partial results about these questions for certain families of graphs.

In~\cite{B18}, Braun et al.\ show that for path graphs $\mathcal{K}(P_n;\vd,\vr)$ is always trivial, while for cycle graphs $\mathcal{K}(C_n;\vd,\vr)$ is a cyclic group whose order is the number of entries of~$\vr$ that are~$1$. In \cite[Proposition~5.1]{A20}, Archer et al.\ show that critical groups of arithmetical structures on bidents are always cyclic, while in \cite[Theorem~1]{A24} critical groups of arithmetical structures on star graphs are determined in terms of the $\vd$-values at the leaves, which also follows from \cite[Theorem~2.1]{L92}. In \cite[Corollary~2.10]{L24}, Lorenzini shows that every graph admits an arithmetical structure with trivial critical group.

This paper uses a generalized notion of chip firing on arithmetical structures to study critical groups of arithmetical structures on trees. We begin in Section~\ref{sec:divisorschipfiring}, where we interpret elements of critical groups of arithmetical structures on general graphs as equivalence classes of divisors under chip-firing relations. The main result of this section, Theorem~\ref{thm:invariantfactorbound}, bounds the number of invariant factors of a critical group in terms of the number of vertices at which divisor representatives can be made to have no chips. In Section~\ref{sec:merging}, we study the behavior of critical groups under wedge sums of graphs with arithmetical structures. Theorem~\ref{thm:mergingcriticalgroup} shows that, when the $\vr$-values at the vertices to be merged are relatively prime, the critical group is additive under this operation, extending a result of Lorenzini \cite[Proposition~4.3]{L00}. 

Beginning in Section~\ref{sec:boundinginvariantfactors}, we turn our attention to trees, decomposing them into pieces that have at most one vertex of degree greater than~$2$. Using the chip-firing perspective from Section~\ref{sec:divisorschipfiring} and the merging perspective from Section~\ref{sec:merging}, we bound the number of invariant factors of a critical group in terms of the number of leaves of these pieces (Theorem~\ref{thm:invariantfactorsboundtrees}). As a corollary, we classify those trees for which every arithmetical structure has cyclic critical group. In Section~\ref{sec:invariants}, we introduce an invariant we call the splitting irregularity number of a tree and relate it to the $2$-matching number. Theorem~\ref{thm:invariantfactorsboundtrees2} then reinterprets Theorem~\ref{thm:invariantfactorsboundtrees} in terms of the splitting irregularity number and $2$-matching number, recovering and extending a result of Corrales and Valencia \cite[Theorem~3.9]{CV15}. 

Finally, in Section~\ref{sec:constructgroups}, we construct arithmetical structures on trees with prescribed critical groups. Every finite abelian group is realized as the critical group of some arithmetical structure on a tree. Furthermore, Corollary~\ref{cor:constructgroups} shows that, if $T$ is a tree, $\ell(T)$ is its number of leaves, and $\mathcal{G}$ is a finite abelian group with at most $\ell(T)-2$ invariant factors, then there is an arithmetical structure on a subdivision of~$T$ with critical group~$\mathcal{G}$. A more general statement involving the splitting irregularity numbers of the trees is given in Theorem~\ref{thm:constructgroups}.

\section{Divisors, chip firing, and the critical group}\label{sec:divisorschipfiring}

Given a finite, connected graph~$G$ without loops, there is a well-known chip-firing game (see \cite{CP,Klivans} for an overview) in which one considers the set of divisors (labelings of the vertices by integers) under an equivalence relation involving \emph{firing} and \emph{borrowing} at vertices. Replacing the standard firing and borrowing by a variation involving~$\vd$, one obtains a notion of chip firing on an arithmetical structure on~$G$, as described in \cite[Section~4]{AB}. In the case of the Laplacian arithmetical structure, the set of divisors with degree (sum of the labels)~$0$, up to firing and borrowing, is isomorphic to the critical group of the graph. Generalizing the notion of degree to a weighted sum involving~$\vr$, this result extends to any arithmetical structure on~$G$. 

For completeness, we define the generalized chip-firing game that we consider. Denote the adjacency matrix of~$G$ by $A(G) = (a_{v,w})_{v,w \in V(G)}$. A \emph{divisor}~$\delta$ on~$G$ is a function $\delta \colon V(G) \to \ZZ$; the integer value at each vertex can be thought of as giving the number of chips there. For a fixed $\vd \in \ZZ^{V(G)}$, we can obtain an equivalence relation on the set of divisors by defining firing and borrowing as follows. Given a divisor~$\delta$, \emph{firing} at vertex~$v$ adds $a_{v,w}$ chips to each vertex~$w$ other than~$v$ and removes $\vd(v)$ chips from~$v$. Similarly, \emph{borrowing} at vertex~$v$ (or, equivalently, firing $-1$~times at~$v$) removes $a_{v,w}$ chips from each vertex~$w$ other than~$v$ and adds $\vd(v)$ chips to~$v$. Letting $\delta'$ be the divisor that results from~$\delta$ after firing at~$v$ and $\delta''$ be the divisor that results from~$\delta$ after borrowing at~$v$, we have
\[\delta'(w) = \begin{cases}
 \delta(w) + a_{v,w} & \text{for } w \neq v, \\
 \delta(w) - \vd(w) & \text{for } w=v
\end{cases}\qquad\text{and}\qquad
\delta''(w) = \begin{cases}
 \delta(w) - a_{v,w} & \text{for } w \neq v, \\
 \delta(w) + \vd(w) & \text{for } w=v.
\end{cases}\]
Thinking of a divisor~$\delta$ as a vector in $\ZZ^{V(G)}$, firing or borrowing at~$v$ is equivalent to subtracting or adding, respectively, the column of the matrix $\diag(\vd)-A(G)$ indexed by~$v$. Therefore, two divisors are equivalent exactly when they differ by an element in the image of $\diag(\vd)-A(G)$, considered as a map $\ZZ^{V(G)} \to \ZZ^{V(G)}$. 

In this paper, we will primarily be concerned with the case when $\vd$ determines an arithmetical structure $(\vd,\vr)$ on~$G$. In this case, we can define the \emph{degree} of a divisor~$\delta$ with respect to~$\vr$, denoted $\deg_{\vr}(\delta)$, to be the sum of the labels weighted by the $\vr$-values, i.e., $\sum_{v \in V(G)} \vr(v)\delta(v)$. Note that the degree of a divisor is preserved under the firing and borrowing operations. The set of equivalence classes of divisors forms a group isomorphic to $\ZZ^{V(G)}/\im(L(G,\vd))$, the full cokernel of $L(G,\vd)$; the set of equivalence classes of divisors of degree~$0$ with respect to~$\vr$ is isomorphic to the torsion part of the cokernel, i.e., the critical group $\mathcal{K}(G;\vd,\vr)$.

Let $\delta$ be a divisor on~$G$, and let $\mathbf{x} \in \ZZ^{V(G)}$. Starting with~$\delta$ and firing $\mathbf{x}(v)$ times at~$v$ for all $v \in V(G)$ is the same as subtracting $(\diag(\vd)-A(G))\mathbf{x}$ from~$\delta$. In the case of an arithmetical structure $(\vd,\vr)$, since $\vr$ is in the kernel of $L(G,\vd)$, firing $\vr(v)$ times at~$v$ for all $v \in V(G)$ does not change a divisor.

\begin{example}
Let $C_4$ be the cycle graph on four vertices with $V(C_4) = \{v_1,v_2,v_3,v_4\}$, where $a_{v_i,v_j}$ is equal to~$0$ if $\{i,j\}$ is $\{1,3\}$ or $\{2,4\}$ and equal to~$1$ otherwise. An arithmetical structure on~$C_4$ is given by $(\vd(v_i))_{i=1}^4 = (3,2,1,4)$ and $(\vr(v_i))_{i=1}^4 = (1,2,3,1)$, with associated generalized Laplacian matrix
\[L(C_4,\vd) = \begin{bmatrix}
 3 & -1 & 0 & -1 \\ 
 -1 & 2 & -1 & 0 \\ 
 0 & -1 & 1 & -1 \\ 
 -1 & 0 & -1 & 4
\end{bmatrix}.\]
Consider the divisor $\delta = (3,1,-1,-2)$. The degree of this divisor is $\deg_{\vr}(\delta) = 1 \cdot 3 + 2 \cdot 1 + 3 \cdot (-1) + 1 \cdot (-2) = 0$. By firing once at vertex~$v_2$, we obtain the divisor $\delta' = (4,-1,0,-2)$. If we instead fire at~$v_1$ three times, borrow at~$v_3$ five times, and fire at~$v_4$ one time, this is equivalent to subtracting from~$\delta$ the vector $L(G,\vd)\mathbf{v}$, where $\mathbf{v} = (3,0,-5,1)^T$, which gives the new divisor $\delta'' = (-5,-1,5,-8)$. 
 
Although $\delta$ has degree~$0$, it is not equivalent to the divisor~$\epsilon$ with $\epsilon(v)=0$ for all $v \in V(G)$, which also has degree~$0$. If it were, then $\delta$ would be in the image of $L(G,\vd)$ as a map $\ZZ^{V(G)} \to \ZZ^{V(G)}$, but it is straightforward to check that it is not. However, $2\delta$ is in the image of $L(G,\vd)$ since $2\delta = L(G,\vd)\mathbf{w}$ for $\mathbf{w} = (3,3,1,0)^T$, and thus $\delta$ has order~$2$ in the critical group $\mathcal{K}(C_4;\vd,\vr)$. 
\end{example}

A \emph{tentacle} of a graph is a collection $\{v_0, v_1, \dots, v_k\}$ of vertices with $k \geq 1$, where $v_i \sim v_{i+1}$ for $0 \leq i \leq k-1$, $\deg(v_i)=2$ for all $i \in [k-1]$, $\deg(v_k)=1$, and $\deg(v_0) \neq 2$. We call $v_0$ the \emph{base} of the tentacle and $k$ the \emph{length} of the tentacle.

\begin{remark}\label{rem:borrowalongtentacle}
If a graph~$G$ has a tentacle $\{v_0, v_1, \dots, v_k\}$, then any divisor~$\delta$ on~$G$ is equivalent with respect to any $\vd \in \ZZ^{V(G)}$ to a divisor~$\delta'$ that is $0$ at $v_1, v_2, \dots, v_k$. Indeed, one can find such a~$\delta'$ by first borrowing at $v_{k-1}$ to obtain a divisor that is $0$ at~$v_k$, then borrowing at $v_{k-2}$ to obtain a divisor that is also $0$ at $v_{k-1}$, and continuing in this way through borrowing at~$v_0$. Similarly, $\delta$ is equivalent to a divisor~$\delta''$ that is $0$ at $v_0, v_1, \dots, v_{k-1}$. We can find such a~$\delta''$ by first borrowing at~$v_1$ to obtain a divisor that is $0$ at~$v_0$, then borrowing at~$v_2$ to obtain a divisor that is also $0$ at~$v_1$, and continuing in this way through borrowing at~$v_k$. We will use this process in Section~\ref{sec:boundinginvariantfactors} when working with divisors on trees. 
\end{remark}

In the next proposition, we relate certain properties of divisors on~$G$ to properties of the matrix $\diag(\vd)-A(G)$. 

\begin{proposition}\label{prop:equivalent}
Let $G$ be a finite, connected graph without loops, and let $\vd \in \ZZ^{V(G)}$. Fix subsets $X,Y \subseteq V(G)$ with $\abs{X} \leq \abs{Y}$, and let $M$ be the submatrix of $\diag(\vd)-A(G)$ determined by rows~$X$ and columns~$Y$. The following are equivalent:
\begin{enumerate}[label=\textup{(\alph*)},ref=\textup{\alph*}]
\item Every divisor on~$G$ is equivalent under firing \textup{(}with respect to $\vd$\textup{)} at vertices in~$Y$ to a divisor that is $0$ at vertices in~$X$.\label{item:clearchips} 
\item The matrix~$M$ has a right inverse over~$\ZZ$. \label{item:rightinverse}
\item The greatest common divisor of the $\abs{X} \times \abs{X}$ minors of~$M$ is~$1$. \label{item:gcd1}
\end{enumerate}
\end{proposition}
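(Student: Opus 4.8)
The plan is to translate statement (\ref{item:clearchips}) into the assertion that the map $M\colon\mathbb{Z}^{Y}\to\mathbb{Z}^{X}$ is surjective, and then to recognize (\ref{item:rightinverse}) and (\ref{item:gcd1}) as two standard reformulations of that surjectivity. Write $L=\diag(\vd)-A(G)$ and set $m=\abs{X}$, $n=\abs{Y}$. As observed just before the proposition, firing $\mathbf{x}(v)$ times at each $v\in Y$ (for some $\mathbf{x}\in\mathbb{Z}^{Y}$) changes a divisor $\delta$ by subtracting the vector $L_Y\mathbf{x}$, where $L_Y$ is the submatrix of $L$ consisting of the columns indexed by $Y$. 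Restricting to the coordinates in $X$, the value of the resulting divisor at the vertices of $X$ is $\delta|_X-M\mathbf{x}$. Since $\delta|_X$ can be an arbitrary element of $\mathbb{Z}^{X}$ as $\delta$ ranges over all divisors, statement (\ref{item:clearchips}) holds if and only if for every $b\in\mathbb{Z}^{X}$ there is some $\mathbf{x}\in\mathbb{Z}^{Y}$ with $M\mathbf{x}=b$, i.e., if and only if $M\colon\mathbb{Z}^{Y}\to\mathbb{Z}^{X}$ is surjective. The equivalence (\ref{item:clearchips})$\Leftrightarrow$(\ref{item:rightinverse}) is then immediate: if $M$ is surjective, pick $\mathbf{x}_i\in\mathbb{Z}^{Y}$ with $M\mathbf{x}_i=e_i$ for each standard basis vector $e_i\in\mathbb{Z}^{X}$, and the matrix with columns $\mathbf{x}_1,\dots,\mathbf{x}_m$ is a right inverse of $M$ over $\mathbb{Z}$; conversely, if $MN=I_m$ then $M(Nb)=b$ for every $b$, so $M$ is surjective.

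It remains to prove (\ref{item:rightinverse})$\Leftrightarrow$(\ref{item:gcd1}), for which I would argue directly with minors (the hypothesis $\abs{X}\le\abs{Y}$ ensures that $m\times m$ submatrices of $M$ exist). For (\ref{item:rightinverse})$\Rightarrow$(\ref{item:gcd1}): if $MN=I_m$, then the Cauchy--Binet formula gives $1=\det I_m=\sum_{S}\det(M_S)\det(N_S)$, where $S$ runs over the $m$-element subsets of $Y$, $M_S$ is the corresponding $m\times m$ submatrix of $M$, and $N_S$ is the $m\times m$ submatrix of $N$ with rows indexed by $S$; thus $1$ is an integer combination of the $m\times m$ minors of $M$, so their greatest common divisor is $1$. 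For (\ref{item:gcd1})$\Rightarrow$(\ref{item:rightinverse}): write $\sum_S c_S\det(M_S)=1$ with $c_S\in\mathbb{Z}$. For each $S$, padding $\operatorname{adj}(M_S)$ with zero rows in the positions of $Y\setminus S$ produces an $n\times m$ integer matrix $\widetilde{N}_S$ with $M\widetilde{N}_S=M_S\operatorname{adj}(M_S)=\det(M_S)I_m$, and then $N=\sum_S c_S\widetilde{N}_S$ satisfies $MN=\bigl(\sum_S c_S\det(M_S)\bigr)I_m=I_m$. (Alternatively, both implications fall out at once from the Smith normal form $M=UDV$ with $U,V$ invertible over $\mathbb{Z}$: the diagonal entries $\alpha_1\mid\dots\mid\alpha_m$ of $D$ satisfy $\alpha_1\cdots\alpha_m=\gcd$ of the $m\times m$ minors of $M$, and $M$ has a right inverse over $\mathbb{Z}$ exactly when $D$ does, which happens precisely when every $\alpha_i=1$.)

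There is no deep obstacle here; the proposition is essentially a bookkeeping exercise in linear algebra over $\mathbb{Z}$. The one point requiring care is the reduction from (\ref{item:clearchips}) to the surjectivity of $M$: one must be careful that the values of a divisor outside $X$ play no role in (\ref{item:clearchips}) while its values on $X$ are entirely unconstrained, and one must remember throughout that it is inverses over $\mathbb{Z}$ — not merely over $\mathbb{Q}$ — that are wanted, which is exactly the information detected by the gcd of the $m\times m$ minors.
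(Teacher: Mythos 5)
Your proof is correct. The equivalences (a)$\Leftrightarrow$(b) and (b)$\Rightarrow$(c) match the paper's argument in substance: the paper phrases (a)$\Leftrightarrow$(b) directly in terms of writing each standard basis vector of $\mathbb{Z}^{X}$ as an integral combination of the columns of $M$ rather than through the word ``surjective,'' and it uses the same Cauchy--Binet computation for (b)$\Rightarrow$(c). The one genuine divergence is (c)$\Rightarrow$(b): the paper extracts a right inverse from the Smith normal form $UMV=S$ (with $S$ equal to $I_{X}$ padded by zero columns) by restricting $V$ to the columns indexed by~$X$, whereas your primary argument constructs the right inverse explicitly as $N=\sum_S c_S\widetilde{N}_S$, an integer combination of adjugates of the $\abs{X}\times\abs{X}$ submatrices padded with zero rows. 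Your construction is more elementary and self-contained, avoiding any appeal to the fact that the gcd of maximal minors is the product of the Smith invariants; the paper's is marginally shorter given that Smith normal forms are already in play throughout the paper, and your parenthetical remark is essentially that route. Both are valid, and your careful reduction of (a) to surjectivity of $M\colon\mathbb{Z}^{Y}\to\mathbb{Z}^{X}$ is a clean way to organize the first equivalence.
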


\begin{proof}
We first prove that (\ref{item:clearchips}) implies~(\ref{item:rightinverse}). Denote the columns of~$M$ by $\{\textbf{m}_y\}_{y \in Y}$. Suppose any divisor is equivalent under firing at vertices in~$Y$ to a divisor with no chips at vertices in~$X$. This means that, given any $\mathbf{v} \in \ZZ^{X}$, we can find integers $\{a_y\}_{y \in Y}$ such that 
\[\mathbf{v} - \sum_{y \in Y} a_y\textbf{m}_y = \textbf{0}.\] 
Letting $\mathbf{v}$ be each of the columns of the identity matrix~$I_X$, the corresponding values of~$a_y$ produce the columns of a matrix that is a right inverse of~$M$.

We next prove that (\ref{item:rightinverse}) implies~(\ref{item:clearchips}). If $M$ has a right inverse, then each of the columns of~$I_X$ is an integral linear combination of the columns of~$M$, and hence every vector in~$\ZZ^{X}$ is a linear combination of the columns of~$M$. Given a divisor, let $\mathbf{v}$ record the number of chips at each vertex. The coefficients of the linear combination that gives $-\mathbf{v}$ as a combination of the columns of~$M$ tell us how many times to fire at each of the vertices in~$Y$ so there are no chips at vertices in~$X$. 

We next prove that (\ref{item:rightinverse}) implies~(\ref{item:gcd1}). Suppose that $M$ has a right inverse over~$\ZZ$; call it~$N$. Applying the Cauchy--Binet formula to $MN=I_{X}$, we get that 
\[\sum_{S \in \binom{V(G)}{\abs{X}}}\det(M_{X,S})\det(N_{S,X}) = 1.\]
This says that an integral linear combination of all $\abs{X} \times \abs{X}$ minors of~$M$ is~$1$, so hence the greatest common divisor of these minors is~$1$.

Finally, we prove that (\ref{item:gcd1}) implies~(\ref{item:rightinverse}). If the greatest common divisor of the $\abs{X} \times \abs{X}$ minors of~$M$ is~$1$, then from the Smith normal form of~$M$ we have that $UMV=S$, where $U$ and~$V$ are invertible over~$\ZZ$ and $S$ is an $\abs{X} \times \abs{Y}$ matrix that is $I_{X}$ with $\abs{Y}-\abs{X}$ columns of zeros appended. Restricting to the columns indexed by~$X$, we have that $UMV'=I_{X}$, where $V'$ consists of the columns of~$V$ indexed by~$X$. Multiplying by $U^{-1}$ on the left and $U$ on the right, we obtain $MV'U=I_{X}$, so hence $M$ has a right inverse.
\end{proof}

When $\vd$ determines an arithmetical structure, Proposition~\ref{prop:equivalent} allows us to give an upper bound on the number of invariant factors of the critical group of that arithmetical structure. 

\begin{theorem}\label{thm:invariantfactorbound}
Let $G$ be a finite, connected graph without loops. Fix a nonempty subset $Z \subseteq V(G)$ and an arithmetical structure $(\vd,\vr)$ on~$G$. If every divisor on~$G$ is equivalent \textup{(}with respect to~$\vd$\textup{)} to a divisor that is nonzero only at vertices in~$Z$, then $\mathcal{K}(G;\vd,\vr)$ has at most $\abs{Z}-1$ invariant factors.
\end{theorem}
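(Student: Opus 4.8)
The plan is to translate the hypothesis into a statement about generators of the cokernel of the generalized Laplacian, and then count minimal generators. First I would note that, as explained before Proposition~\ref{prop:equivalent}, two divisors on~$G$ are equivalent (with respect to~$\vd$) exactly when they have the same image in $\ZZ^{V(G)}/\im(L(G,\vd))$, and that this cokernel is the group of equivalence classes of divisors. A divisor that is nonzero only at vertices in~$Z$ is precisely an integral combination of the standard basis vectors $\{e_z\}_{z \in Z}$, so the hypothesis says exactly that every class in $\ZZ^{V(G)}/\im(L(G,\vd))$ is represented by such a combination; equivalently, the images of the $e_z$ for $z \in Z$ generate $\ZZ^{V(G)}/\im(L(G,\vd))$. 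Hence this group is generated by at most $\abs{Z}$ elements, i.e., it is a quotient of $\ZZ^{Z}$.

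Next I would use that $(\vd,\vr)$ is an arithmetical structure, so $L(G,\vd)$ has rank $\abs{V(G)}-1$; therefore $\ZZ^{V(G)}/\im(L(G,\vd)) \cong \ZZ \oplus \mathcal{K}(G;\vd,\vr)$ has free rank exactly~$1$. Write $\mathcal{K}(G;\vd,\vr) \cong \bigoplus_{i=1}^{k} \ZZ/\alpha_i\ZZ$ with $1 < \alpha_1 \mid \alpha_2 \mid \cdots \mid \alpha_k$, so that $k$ is the number of invariant factors, and suppose for contradiction (or just directly) that we want $k \leq \abs{Z}-1$. If $k=0$ there is nothing to prove since $Z$ is nonempty, so assume $k \geq 1$ and fix a prime~$p$ dividing~$\alpha_1$. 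Since $\alpha_1 \mid \alpha_i$ for all~$i$, the prime~$p$ divides every~$\alpha_i$, so tensoring with $\ZZ/p\ZZ$ gives $\bigl(\ZZ \oplus \mathcal{K}(G;\vd,\vr)\bigr) \otimes_{\ZZ} \ZZ/p\ZZ \cong (\ZZ/p\ZZ)^{k+1}$. On the other hand, applying $- \otimes_{\ZZ} \ZZ/p\ZZ$ to the surjection $\ZZ^{Z} \twoheadrightarrow \ZZ \oplus \mathcal{K}(G;\vd,\vr)$ yields a surjection $(\ZZ/p\ZZ)^{\abs{Z}} \twoheadrightarrow (\ZZ/p\ZZ)^{k+1}$ of $\ZZ/p\ZZ$-vector spaces. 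Comparing dimensions gives $k+1 \leq \abs{Z}$, that is, $\mathcal{K}(G;\vd,\vr)$ has at most $\abs{Z}-1$ invariant factors.

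There is essentially only one point requiring care: ensuring that the free $\ZZ$-summand of the cokernel is counted, so that the bound comes out to $\abs{Z}-1$ rather than $\abs{Z}$. This is exactly why I reduce modulo a prime dividing the \emph{smallest} invariant factor $\alpha_1$: this choice forces the free summand and all $k$ torsion summands to survive the reduction mod~$p$ simultaneously, producing an $\mathbb{F}_p$-vector space of dimension precisely $k+1$. The remaining ingredients are purely formal: the identification of divisor equivalence with equality in the cokernel (stated in the text) and the fact that $L(G,\vd)$ has rank $\abs{V(G)}-1$ for an arithmetical structure. One could equivalently invoke the standard fact that a finitely generated abelian group generated by $n$ elements has free rank plus number of invariant factors at most~$n$, but the explicit reduction mod~$p$ keeps the argument self-contained.
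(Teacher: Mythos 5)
Your proof is correct, and it takes a genuinely different route from the paper's. The paper deduces the theorem from its Proposition~\ref{prop:equivalent}: the hypothesis is condition~(\ref{item:clearchips}) of that proposition with $X=V(G)\setminus Z$ and $Y=V(G)$, whence condition~(\ref{item:gcd1}) gives that the greatest common divisor of the $\abs{X}\times\abs{X}$ minors of $L(G,\vd)$ is~$1$, so the first $\abs{X}$ diagonal entries of the Smith normal form are~$1$; combined with the last entry being~$0$ (rank $\abs{V(G)}-1$), only $\abs{Z}-1$ entries can contribute invariant factors. You instead bypass the matrix entirely: you observe that the hypothesis says the images of $\{e_z\}_{z\in Z}$ generate the cokernel $\ZZ^{V(G)}/\im(L(G,\vd))\cong\ZZ\oplus\mathcal{K}(G;\vd,\vr)$, and then bound the number of invariant factors of a finitely generated abelian group of free rank~$1$ on $\abs{Z}$ generators by reducing modulo a prime dividing the smallest invariant factor. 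Your key step (choosing $p\mid\alpha_1$ so that all $k$ torsion summands and the free summand survive tensoring with $\ZZ/p\ZZ$, yielding a surjection $(\ZZ/p\ZZ)^{\abs{Z}}\twoheadrightarrow(\ZZ/p\ZZ)^{k+1}$) is sound, and you correctly handle the trivial case $k=0$. What each approach buys: the paper's argument reuses machinery (Cauchy--Binet, determinantal divisors) that it needs anyway for Proposition~\ref{prop:equivalent} and extracts finer information about which Smith normal form entries are~$1$; yours is more elementary and self-contained at the level of the group itself, essentially reproving the standard fact that the minimal number of generators of a finitely generated abelian group is its free rank plus its number of invariant factors.
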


\begin{proof}
The hypothesis says that condition~(\ref{item:clearchips}) of Proposition~\ref{prop:equivalent} is satisfied with $X = V(G) \setminus Z$ and $Y=V(G)$. Condition~(\ref{item:gcd1}) of Proposition~\ref{prop:equivalent} then implies that the first~$\abs{X}$ diagonal entries of the Smith normal form of $L(G,\vd)$ are~$1$, and the fact that we have an arithmetical structure means that the last diagonal entry of the Smith normal form is~$0$. Since there are only $\abs{V(G)}-\abs{X}-1 = \abs{Z}-1$ other diagonal entries of the Smith normal form, the result holds.
\end{proof}

As an example, we can use Theorem~\ref{thm:invariantfactorbound} to recover the result from \cite[Theorem~7]{B18} that critical groups of arithmetical structures on path graphs are always trivial.

\begin{example}\label{ex:pathgraph}
For a path graph, any divisor is equivalent with respect to any $\vd \in \ZZ^{V(G)}$ to one with all chips at one vertex, by using the procedure described in Remark~\ref{rem:borrowalongtentacle}. The conditions of Theorem~\ref{thm:invariantfactorbound} are thus satisfied for some~$Z$ of cardinality~$1$, so the critical group of every arithmetical structure is trivial. 
\end{example}

The bound given by Theorem~\ref{thm:invariantfactorbound} is not necessarily sharp, as the next example shows.

\begin{example}
Consider the graph obtained by merging a $3$-cycle, a $5$-cycle, and a $7$-cycle at one vertex with the Laplacian arithmetical structure. The critical group is the cyclic group $\ZZ/105\ZZ$ and so has one invariant factor. Becker and Glass \cite[Example~2.3]{BG} show that any element of the critical group that is represented by a divisor that is nonzero at only two vertices cannot have order $105$. Therefore not all divisors are equivalent to a divisor that is nonzero at only two vertices, so Theorem~\ref{thm:invariantfactorbound} cannot be used to show that the critical group has at most one invariant factor. 

The element of the critical group represented by the divisor~$\delta$ obtained by putting $1$~chip on a vertex of the $3$-cycle, $1$~chip on a vertex of the $5$-cycle and $-2$~chips on a vertex of the $7$-cycle has order $105$, implying that every divisor is equivalent to one with chips at only these vertices. Letting $Z$ be the set of these three vertices, Theorem~\ref{thm:invariantfactorbound} gives the non-sharp bound that the critical group has at most $\abs{Z}-1=2$ invariant factors.
\end{example}

\section{Merging arithmetical structures}\label{sec:merging}

Suppose $G_1$ and~$G_2$ are two finite, connected graphs without loops, and choose $x \in V(G_1)$ and $y \in V(G_2)$. Let $G_1 \vee G_2$ be the graph obtained from $G_1$ and~$G_2$ by identifying the vertices $x$ and~$y$. In this section, we describe how to obtain an arithmetical structure on $G_1 \vee \nolinebreak G_2$ from arithmetical structures on $G_1$ and~$G_2$. In the case when the $\vr$-values at $x$ and~$y$ are relatively prime, we will show that the critical group of the resulting arithmetical structure is isomorphic to the direct sum of the critical groups of the two original arithmetical structures. 

We first describe how to obtain an arithmetical structure on $G_1 \vee G_2$ from arithmetical structures on $G_1$ and~$G_2$. This merging operation was previously studied by Hower in his thesis \cite[Definition~7.1]{Hower}. It is also closely related to operations studied by Corrales and Valencia \cite{CV18-LAA,CV18-JAA}. 

\begin{proposition}\label{prop:mergingconstruction}
Let $G_1$ and~$G_2$ be finite, connected graphs without loops, and choose $x \in V(G_1)$ and $y \in V(G_2)$. If $(\vd_1,\vr_1)$ and $(\vd_2,\vr_2)$ are arithmetical structures on $G_1$ and~$G_2$, respectively, then there is an arithmetical structure on $G_1 \vee G_2$ defined by
\[\vd(v) = \begin{cases}
 \vd_1(v) & \text{if } v \in V(G_1)\setminus\{x\}, \\
 \vd_2(v) & \text{if }v \in V(G_2)\setminus\{y\}, \\
 \vd_1(x) + \vd_2(y) & \text{if }v \text{ is the vertex of } G_1 \vee G_2 \text{ resulting from merging $x$ and~$y$}
\end{cases}\]
and
\[\vr(v) = \begin{cases}
 \frac{b}{\gcd(a,b)} \vr_1(v) & \text{for } v \in V(G_1), \\
 \frac{a}{\gcd(a,b)} \vr_2(v) & \text{for }v \in V(G_2),
\end{cases}\]
where $a=\vr_1(x)$ and $b=\vr_2(y)$.
\end{proposition}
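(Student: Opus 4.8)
The plan is to verify directly that the vector $\vr$ on $V(G_1 \vee G_2)$ obtained by gluing $\vr_1$ and $\vr_2$ (after rescaling so they agree at the merged vertex) satisfies the arithmetical-structure conditions with the stated $\vd$. First I would set up notation: write $z$ for the merged vertex. The $\vr$-values $\vr_1(x)$ and $\vr_2(y)$ need not be equal, so I would replace $\vr_1$ by $\tfrac{m}{\vr_1(x)}\vr_1$ and $\vr_2$ by $\tfrac{m}{\vr_2(y)}\vr_2$, where $m = \lcm(\vr_1(x),\vr_2(y))$; these are still integer vectors (each entry of $\vr_i$ is multiplied by an integer) and still satisfy their respective Laplacian equations since those are linear. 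Now both agree at the glued vertex, taking the common value $m$, so I can define $\vr$ on $G_1 \vee G_2$ by $\vr(v) = \vr_1(v)$ for $v \in V(G_1)$ and $\vr(v) = \vr_2(v)$ for $v \in V(G_2)$; this is well-defined at $z$. After dividing $\vr$ through by $\gcd_v \vr(v)$ we get a vector with positive integer entries and no common factor.

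Next I would check Equation~\eqref{eq:arithmeticalstructure} vertex by vertex for this $\vr$ and the given $\vd$. For a vertex $v \in V(G_1)\setminus\{x\}$, its neighborhood in $G_1 \vee G_2$ is the same as in $G_1$, with the same multiplicities, and $\vd(v) = \vd_1(v)$; the $\vr$-values of $v$ and its neighbors are exactly the (rescaled) $\vr_1$-values, so the equation at $v$ is precisely the $G_1$-equation, which holds. The symmetric argument handles $v \in V(G_2)\setminus\{y\}$. The only vertex requiring real work is $z$: in $G_1 \vee G_2$ its set of neighbors is the union of the $G_1$-neighbors of $x$ and the $G_2$-neighbors of $y$ (these are disjoint since the graphs are glued only at one vertex), so
\[
\sum_{w \sim z} a_{z,w}\,\vr(w) \;=\; \sum_{w \sim_{G_1} x} a^{(1)}_{x,w}\,\vr_1(w) \;+\; \sum_{w \sim_{G_2} y} a^{(2)}_{y,w}\,\vr_2(w) \;=\; \vd_1(x)\,\vr_1(x) + \vd_2(y)\,\vr_2(y).
\]
Since $\vr_1(x) = \vr_2(y) = m = \vr(z)$ after rescaling, this equals $(\vd_1(x)+\vd_2(y))\vr(z) = \vd(z)\vr(z)$, which is exactly the required equation at $z$ with $\vd(z) = \vd_1(x)+\vd_2(y) \in \ZZ_{\geq 0}$.

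That essentially completes the argument; the one point deserving care is the rescaling step and the claim that the glued vector is well-defined and has integer entries with no common factor, which I expect to be the only genuine (if minor) obstacle — the rest is a routine unwinding of the definition of adjacency in a wedge sum. I would also remark that the $\vd$-value assigned to $z$ is a nonnegative integer automatically, so no positivity issue arises, and that dividing out the gcd does not disturb the linear equation since $\vr$ appears linearly in~\eqref{eq:arithmeticalstructure}.
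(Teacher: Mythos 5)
Your proposal is correct and follows essentially the same route as the paper: rescale $\vr_1$ and $\vr_2$ so they agree (with value $\lcm(\vr_1(x),\vr_2(y))$) at the merged vertex — your factors $m/\vr_1(x)$ and $m/\vr_2(y)$ are exactly the paper's $b/\gcd(a,b)$ and $a/\gcd(a,b)$ — and then verify the defining equation vertex by vertex, with the merged vertex the only case needing the additivity of $\vd$. The only cosmetic difference is that the paper observes directly that the rescaled vector already has coprime entries (since $a/\gcd(a,b)$ and $b/\gcd(a,b)$ are coprime and each $\vr_i$ is primitive), whereas you divide out the gcd at the end; both are fine.
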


\begin{proof}
Note that $\vr$ is well defined as the given values agree at the merged vertex. The fact that $a/\gcd(a,b)$ and $b/\gcd(a,b)$ are relatively prime implies that the entries of~$\vr$ have no nontrivial common factor. One can check that $(\diag(\vd)-A)\vr = \mathbf{0}$ since $(\vd_1,\vr_1)$ and $(\vd_2,\vr_2)$ are arithmetical structures on $G_1$ and~$G_2$, as done in \cite[Theorem~2.3]{CV18-JAA}. Thus $(\vd,\vr)$ is an arithmetical structure on $G_1 \vee G_2$.
\end{proof}

The following example illustrates this construction.

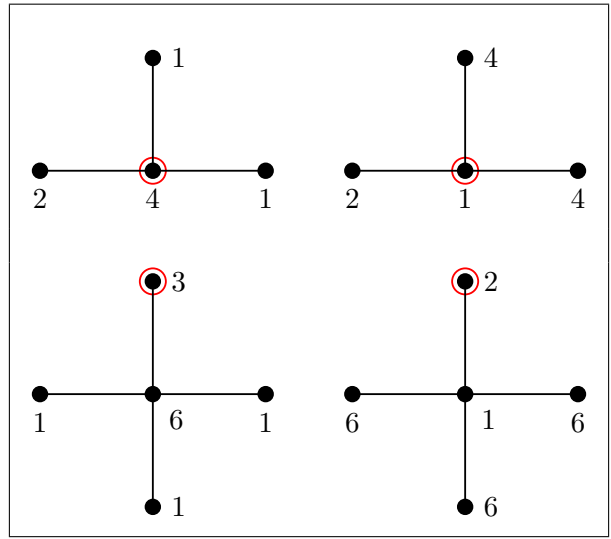
\begin{figure}
\centering
\begin{tikzpicture}[scale=.77]

\draw[thick] (-3,-3) -- (-3,7) -- (8.5,7) -- (8.5,-3) -- (-3,-3);

\draw[red, fill=white] (0, 4) circle[radius=0.6em]; 
\node[main_node, label=below:{$1$}] (C1) at (0, 4) {};

\node[main_node, label=right:{$4$}] (N1) at (0, 6) {};
\node[main_node, label=below:{$4$}] (E1) at (2, 4) {};
\node[main_node, label=below:{$2$}] (W1) at (-2, 4) {};

\draw (C1.center) -- (N1.center);
\draw (C1.center) -- (E1.center);
\draw (C1.center) -- (W1.center);

\draw[red, fill=white] (5.5, 4) circle[radius=0.6em]; 
\node[main_node, label=below:{$4$}] (C3) at (5.5, 4) {};

\node[main_node, label=right:{$1$}] (N3) at (5.5, 6) {};
\node[main_node, label=below:{$1$}] (E3) at (7.5, 4) {};
\node[main_node, label=below:{$2$}] (W3) at (3.5, 4) {};

\draw (C3.center) -- (N3.center);
\draw (C3.center) -- (E3.center);
\draw (C3.center) -- (W3.center);

\node[main_node, label=below right:{$1$}](C2) at (0, 0) {};

\draw[red, fill=white] (0, 2) circle[radius= 0.6 em]; 
\node[main_node, label=right:{$2$}] (N2) at (0, 2) {};
\node[main_node, label=below:{$6$}] (W2) at (-2, 0) {};
\node[main_node, label=right:{$6$}] (S2) at (0, -2) {};
\node[main_node, label=below:{$6$}] (E2) at (2, 0) {};

\draw (C2.center) -- (N2.center);
\draw (C2.center) -- (W2.center);
\draw (C2.center) -- (S2.center);
\draw (C2.center) -- (E2.center);

\node[main_node, label=below right:{$6$}] (C4) at (5.5, 0) {};

\draw[red, fill=white] (5.5, 2) circle[radius= 0.6 em]; 
\node[main_node, label=right:{$3$}] (N4) at (5.5, 2) {};
\node[main_node, label=below:{$1$}] (W4) at (3.5, 0) {};
\node[main_node, label=right:{$1$}] (S4) at (5.5, -2) {};
\node[main_node, label=below:{$1$}] (E4) at (7.5, 0) {};

\draw (C4.center) -- (N4.center);
\draw (C4.center) -- (W4.center);
\draw (C4.center) -- (S4.center);
\draw (C4.center) -- (E4.center);

\end{tikzpicture}

\caption{Arithmetical structures on $G_1$ (upper) and $G_2$ (lower), where the $\vd$-values are given on the left and the $\vr$-values on the right. The arithmetical structure on $G_1 \vee G_2$ obtained by merging $G_1$ and~$G_2$ at the circled vertices is shown in Figure~\ref{fig:postmerge}.}
\label{fig:premerge}
\end{figure}

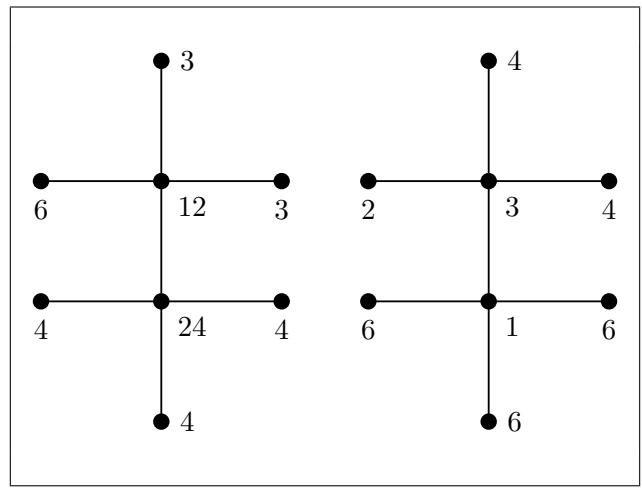
\begin{figure}
\centering
\begin{tikzpicture}[scale=.77]

\draw[thick] (-3,-3) -- (-3,5) -- (8.5,5) -- (8.5,-3) -- (-3,-3);

\node[main_node, label=below right:{$3$}] (C1) at (0, 2) {};

\node[main_node, label=right:{$4$}] (N1) at (0, 4) {};
\node[main_node, label=below:{$2$}] (W1) at (-2, 2) {};
\node[main_node, label=below:{$4$}] (E1) at (2, 2) {};

\draw (C1.center) -- (N1.center);
\draw (C1.center) -- (W1.center);
\draw (C1.center) -- (E1.center);

\node[main_node, label=below right:{$1$}] (C2) at (0, 0) {};

\node[main_node, label=right:{$6$}] (S2) at (0, -2) {};
\node[main_node, label=below:{$6$}] (W2) at (-2, 0) {};
\node[main_node, label=below:{$6$}] (E2) at (2, 0) {};

\draw (C2.center) -- (W2.center);
\draw (C2.center) -- (S2.center);
\draw (C2.center) -- (E2.center);
\draw (C1.center) -- (C2.center);

\node[main_node, label=below right:{$12$}] (C3) at (5.5, 2) {};

\node[main_node, label=right:{$3$}] (N3) at (5.5, 4) {};
\node[main_node, label=below:{$6$}] (W3) at (3.5, 2) {};
\node[main_node, label=below:{$3$}] (E3) at (7.5, 2) {};

\draw (C3.center) -- (N3.center);
\draw (C3.center) -- (W3.center);
\draw (C3.center) -- (E3.center);

\node[main_node, label=below right:{$24$}] (C4) at (5.5, 0) {};

\node[main_node, label=right:{$4$}] (S4) at (5.5, -2) {};
\node[main_node, label=below:{$4$}] (W4) at (3.5, 0) {};
\node[main_node, label=below:{$4$}] (E4) at (7.5, 0) {};

\draw (C4.center) -- (S4.center);
\draw (C4.center) -- (W4.center);
\draw (C4.center) -- (E4.center);
\draw (C3.center) -- (C4.center);

\end{tikzpicture}

\caption{The graph $G_1 \vee G_2$, with $G_1$ and~$G_2$ from Figure \ref{fig:premerge}, together with the arithmetical structure given by Proposition~\ref{prop:mergingconstruction}. The $\vd$-values are given on the left and the $\vr$-values on the right.\label{fig:postmerge}}
\end{figure}

\begin{example}
Let $\Star_3$ and $\Star_4$ be the star graphs with three and four leaves, respectively, with distinguished vertices as indicated in Figure~\ref{fig:premerge}, and let $(\vd_1,\vr_1)$ and $(\vd_2,\vr_2)$ be the arithmetical structures shown in Figure~\ref{fig:premerge}. It follows from \cite[Theorem~1]{A24} that $\mathcal{K}(\Star_3;\vd_1,\vr_1) \cong \ZZ/2\ZZ$ and $\mathcal{K}(\Star_4;\vd_2,\vr_2) \cong \ZZ/2\ZZ \oplus \ZZ/6\ZZ$. The graph $G = \Star_3 \vee \Star_4$ obtained by merging the central vertex of $\Star_3$ and one of the leaves of $\Star_4$, with the arithmetical structure $(\vd,\vr)$ given by Proposition~\ref{prop:mergingconstruction}, is shown in Figure~\ref{fig:postmerge}. One can check that the critical group of this arithmetical structure on~$G$ is 
\[\mathcal{K}(G;\vd,\vr) \cong \ZZ/2\ZZ \oplus \ZZ/2\ZZ \oplus \ZZ/6\ZZ \cong \mathcal{K}(\Star_3;\vd_1,\vr_1) \oplus \mathcal{K}(\Star_4;\vd_2,\vr_2).\] 
\end{example}

When $G = G_1 \vee G_2$ and $p$ is a prime number that does not divide the $\vr$-value at the merged vertex, Lorenzini \cite[Proposition~4.3]{L00} showed that the $p$-parts of $\mathcal{K}(G;\vd,\vr)$ and $\mathcal{K}(G_1;\vd_1,\vr_1) \oplus \mathcal{K}(G_2;\vd_2,\vr_2)$ are isomorphic. As a special case, this gives that
\[\mathcal{K}(G;\vd,\vr) \cong \mathcal{K}(G_1;\vd_1,\vr_1) \oplus \mathcal{K}(G_2;\vd_2,\vr_2)\]
for Laplacian arithmetical structures. The next theorem extends Lorenzini's result to all prime numbers~$p$ under the assumption that the greatest common divisor of the $\vr$-values at the vertices to be merged is~$1$. 

\begin{theorem}\label{thm:mergingcriticalgroup} 
Let $G_1$ and~$G_2$ be finite, connected graphs without loops, choose vertices $x \in V(G_1)$ and $y \in V(G_2)$, and let $G = G_1 \vee G_2$. Let $(\vd_1,\vr_1)$ and $(\vd_2,\vr_2)$ be arithmetical structures on $G_1$ and~$G_2$, respectively, and let $(\vd,\vr)$ be the arithmetical structure on~$G$ given by Proposition~\ref{prop:mergingconstruction}. If $\gcd(\vr_1(x),\vr_2(y)) = 1$, then
\[\mathcal{K}(G;\vd,\vr) \cong \mathcal{K}(G_1;\vd_1,\vr_1) \oplus \mathcal{K}(G_2;\vd_2,\vr_2).\]
\end{theorem}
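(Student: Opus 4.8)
The plan is to analyze the generalized Laplacian matrix $L(G,\vd)$ directly in terms of $L(G_1,\vd_1)$ and $L(G_2,\vd_2)$, and to pass through the full cokernel $\mathbb{Z}^{V(G)}/\im(L(G,\vd)) \cong \mathbb{Z} \oplus \mathcal{K}(G;\vd,\vr)$ rather than the torsion part alone, since the full cokernel behaves more transparently under block decompositions. Write $a = \vr_1(x)$, $b = \vr_2(y)$, and $g = \gcd(a,b) = 1$ by hypothesis, so $\vr$ restricts to $b\vr_1$ on $V(G_1)$ and to $a\vr_2$ on $V(G_2)$ (agreeing at the merged vertex, where both give $ab = \vr(z)$). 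Ordering the vertices of $G$ as (vertices of $G_1\setminus\{x\}$, the merged vertex $z$, vertices of $G_2\setminus\{y\}$), the matrix $L(G,\vd)$ is block-arrow-shaped: the only coupling between the $G_1$-block and the $G_2$-block is through the row and column of $z$, and by the definition of $\vd$ in Proposition~\ref{prop:mergingconstruction} the diagonal entry at $z$ splits as $\vd_1(x) + \vd_2(y)$. So $L(G,\vd) = L(G_1,\vd_1) \boxplus L(G_2,\vd_2)$ in the sense that it is the sum of two matrices supported on the two overlapping index blocks, overlapping only in the $(z,z)$ entry.

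The key step is to produce an explicit isomorphism between the cokernels. First I would use the chip-firing picture from Section~\ref{sec:divisorschipfiring}: a divisor on $G$ restricts to divisors on $G_1$ and $G_2$ that share a value at $z$, and firing at a vertex of $G_1\setminus\{x\}$ (resp.\ $G_2\setminus\{y\}$) is a purely local move, while firing at $z$ corresponds to firing simultaneously at $x$ in $G_1$ and at $y$ in $G_2$. This gives a natural surjection $\mathbb{Z}^{V(G_1)}/\im(L(G_1,\vd_1)) \oplus \mathbb{Z}^{V(G_2)}/\im(L(G_2,\vd_2)) \to \mathbb{Z}^{V(G)}/\im(L(G,\vd))$ once one checks that the firing relation at $z$ on the $G$-side is the image of the diagonal firing on the product side; I would instead build the map in the more convenient direction or work with a presentation. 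The cleaner route: exhibit a short exact sequence
\[
0 \to \mathbb{Z} \xrightarrow{\ \phi\ } \bigl(\mathbb{Z}^{V(G_1)}/\im L_1\bigr) \oplus \bigl(\mathbb{Z}^{V(G_2)}/\im L_2\bigr) \xrightarrow{\ \psi\ } \mathbb{Z}^{V(G)}/\im L \to 0,
\]
where $\psi$ adds the two restricted divisors (identifying the $V(G_1)$ and $V(G_2)$ copies of the $z$-coordinate, and summing there) and $\phi(n)$ is the class of $n\vr_1$ in the first summand paired with the class of $-n\vr_2$ (times the appropriate sign) in the second; one checks $\psi\phi = 0$ because $\vr_1$ and $\vr_2$ lie in the kernels of $L_1$ and $L_2$ and the contributions at $z$ cancel after scaling, and exactness is a rank-plus-divisor-count verification. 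Since each outer cokernel is $\mathbb{Z} \oplus (\text{finite})$, the sequence says $\mathbb{Z} \oplus \mathbb{Z} \oplus \mathcal{K}(G_1) \oplus \mathcal{K}(G_2) \cong \mathbb{Z} \oplus (\mathbb{Z} \oplus \mathcal{K}(G))$ as an extension, and by comparing free ranks and then using that the sequence splits (the copy of $\mathbb{Z}$ on the left is a direct summand because its generator $\phi(1)$ is primitive — this is exactly where $\gcd(a,b)=1$ enters, guaranteeing that $\vr = b\vr_1 = a\vr_2$ patches to a primitive vector), one extracts $\mathcal{K}(G) \cong \mathcal{K}(G_1) \oplus \mathcal{K}(G_2)$.

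I expect the main obstacle to be the splitting/primitivity argument: showing not merely that the orders multiply (which follows from Lorenzini's order formula or a determinant-of-minors computation and is comparatively easy) but that the finite parts decompose as a direct sum with no "mixing" between $\mathcal{K}(G_1)$ and $\mathcal{K}(G_2)$ across the merge. This is precisely where the coprimality hypothesis $\gcd(\vr_1(x),\vr_2(y)) = 1$ is indispensable: without it the patched vector $\vr$ is a proper multiple of a primitive vector, the sequence $\phi$ above fails to be split injective, and an extra cyclic factor of order $\gcd(a,b)$ (or something divisor-dependent) can appear. Concretely I would verify the splitting by choosing, via coprimality, integers $s,t$ with $sa + tb = 1$ and using them to build an explicit retraction of $\phi$, or equivalently to show that the vector $\vr$ can be completed to a $\mathbb{Z}$-basis adapted to the block structure. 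The remaining verifications — that $\psi$ is well defined and surjective, that the images of $L_1 \oplus L_2$ map onto $\im L$ together with the diagonal relation, and that ranks match — are routine block-matrix and Smith-normal-form bookkeeping that I would state but not belabor.
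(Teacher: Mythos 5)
Your overall architecture is viable and is essentially the paper's argument lifted from degree-zero divisor classes to full cokernels, but as written it contains one outright error and one mislocated difficulty. The error is your description of the kernel of $\psi$. The class $([\vr_1],[-\vr_2])$ does not lie in $\ker\psi$: the fact that $\vr_i$ lies in the \emph{kernel} of $L_i$ (firing according to $\vr_i$ does nothing) does not make $[\vr_i]$ zero in the \emph{cokernel} $\mathbb{Z}^{V(G_i)}/\im(L_{i})$ --- on the contrary, $[\vr_i]$ has degree $\sum_v \vr_i(v)^2>0$ and generates (a finite-index subgroup of) the free part. Concretely, the combined divisor $\alpha(\vr_1,-\vr_2)$ on $G$ has degree $b\sum_{v\in V(G_1)}\vr_1(v)^2-a\sum_{w\in V(G_2)}\vr_2(w)^2$, which is nonzero in general, so it cannot lie in $\im(L(G,\vd))$, and $\psi\phi\neq 0$. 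The correct generator of $\ker\psi$ is the class of $(e_x,-e_y)$ (one chip at $x$, minus one chip at $y$), i.e., the kernel of the underlying addition map $\mathbb{Z}^{V(G_1)}\oplus\mathbb{Z}^{V(G_2)}\to\mathbb{Z}^{V(G)}$; its image $(a,-b)$ under the degree pair $(\deg_1,\deg_2)$ is primitive exactly when $\gcd(a,b)=1$, which is what gives the splitting. With that repair your short exact sequence argument does go through.

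The mislocated difficulty is well-definedness of $\psi$, which you file under ``routine bookkeeping.'' It is not: one must show that the column of $L_1$ at $x$, viewed in $\mathbb{Z}^{V(G)}$, lies in $\im(L(G,\vd))$, even though it is \emph{not} a column of $L(G,\vd)$ (the column at the merged vertex is the sum of the $x$-column of $L_1$ and the $y$-column of $L_2$). One only knows a priori that $a$ times the first, $b$ times the second, and their sum lie in $\im(L(G,\vd))$, so Bézout's identity $sa+tb=1$ is needed to extract each summand individually; this is precisely the computation the paper performs in its well-definedness step, and it genuinely fails without coprimality (in the $P_3\vee P_3$ example of the paper's remark, the would-be $\psi$ would be a surjection of $\mathbb{Z}^2$ onto $\mathbb{Z}\oplus(\mathbb{Z}/2\mathbb{Z})^2$, which is impossible). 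For comparison, the paper sidesteps your kernel analysis entirely by defining the map only on degree-zero divisor classes, where it is a bijection; the price is a separate surjectivity argument (showing $a$ divides the weighted chip count on $V(G_1)\setminus\{x\}$), which is the second place coprimality enters there.
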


\begin{proof}
For $i=1,2$, let $\mathcal{D}_0({G_i})$ be the set of divisors on~$G_i$ that have degree~$0$ with respect to~$\vr_i$. There is a natural map $\phi \colon \mathcal{D}_0({G_1}) \oplus \mathcal{D}_0({G_2}) \to \mathcal{K}(G;\vd,\vr)$ that takes a pair $(\delta_1,\delta_2)$ to the equivalence class of the divisor~$\delta$ on~$G$ defined by 
\begin{equation}\label{eq:delta}
\delta(v) = \begin{cases}
 \delta_1(v) & \text{if } v \in V(G_1)\setminus\{x\}, \\
 \delta_2(v) & \text{if }v \in V(G_2)\setminus\{y\}, \\
 \delta_1(x) + \delta_2(y) & \text{if }v \text{ is the vertex of } G \text{ resulting from merging $x$ and~$y$,}
\end{cases}
\end{equation}
which is an element of $\mathcal{K}(G;\vd,\vr)$ because $\deg_{\vr}(\delta)=0$. We will show that $\phi$ induces an isomorphism~$\Phi$ between $\mathcal{K}(G_1;\vd_1,\vr_1) \oplus \mathcal{K}(G_2;\vd_2,\vr_2)$ and $\mathcal{K}(G;\vd,\vr)$ as described via the diagram
\[\begin{tikzcd}
\mathcal{D}_0({G_1}) \oplus \mathcal{D}_0({G_2}) \arrow{r}{\phi} \arrow{d}{\pi} & \mathcal{K}(G;\vd,\vr),\\
\mathcal{K}(G_1;\vd_1,\vr_1) \oplus \mathcal{K}(G_2;\vd_2,\vr_2) \arrow[swap,dashed]{ur}{\Phi} &
\end{tikzcd}\]
where $\pi$ is the natural quotient map. To do this, it suffices to show that $\ker(\phi) = \ker(\pi)$ and that $\phi$ is surjective.

We first show that $\ker(\pi) \subseteq \ker(\phi)$. Note that $\ker(\pi)$ is generated by elements of the form $(\delta_v,0)$, where $\delta_v$ is the divisor on~$G_1$ obtained from the trivial divisor by firing once at $v \in V(G_1)$, and elements of the form $(0,\delta_v)$, where $\delta_v$ is the divisor on~$G_2$ obtained from the trivial divisor by firing once at $v \in V(G_2)$. By symmetry, it suffices to show that $(\delta_v,0) \in \ker(\phi)$ for all $v \in V(G_1)$. When $v \neq x$, then $\phi((\delta_v,0))$ is obtained from the trivial divisor on~$G$ by firing at~$v$, so $(\delta_v,0) \in \ker(\phi)$. When $v=x$, we will show how to obtain $\phi((\delta_x,0))$ from the trivial divisor on~$G$. Let $a=\vr_1(x)$ and $b=\vr_2(y)$. Since $\gcd(a,b)=1$, there are integers $s$ and~$t$ such that $sa+tb=1$. Let $\mathbf{w}\in\ZZ^{V(G)}$ be defined by
\[\mathbf{w}(v)=\begin{cases}
 -s\vr_1(v) & \text{if } v \in V(G_1)\setminus\{x\}, \\
 t\vr_2(v) & \text{if }v \in V(G_2)\setminus\{y\}, \\
 tb & \text{if }v \text{ is the vertex of } G \text{ resulting from merging $x$ and~$y$.}
\end{cases}\]
Since firing $\vr_1(v)$ times at each vertex~$v$ of~$G_1$ preserves divisors on~$G_1$, we have 
\[\sum_{v\in V(G_1)\setminus\{x\}}-\vr_1(v)\delta_v=\vr_1(x)\delta_x=a\delta_x.\]
Therefore, starting with the trivial divisor on~$G$ and firing $\mathbf{w}(v)$ times at each vertex $v \in V(G_1)\setminus\{x\}$ yields $\phi((sa\delta_x,0))$. Similarly, starting with the trivial divisor on~$G$ and firing $\mathbf{w}(v)$ times at each $v \in V(G_2)\setminus\{y\}$ yields $\phi((0,-tb\delta_y))$. As a result, the divisor obtained by starting with the trivial divisor on~$G$ and firing at each vertex $v \in V(G)$ exactly $\mathbf{w}(v)$ times is $\phi(((tb+sa)\delta_x, (tb-tb)\delta_y)) = \phi((\delta_x, 0))$, showing that $\ker(\pi) \subseteq \ker(\phi)$. 

We next show that $\ker(\phi) \subseteq \ker(\pi)$. Let $(\delta_1,\delta_2) \in \ker(\phi)$, and let $\delta$ be as in~\eqref{eq:delta}, so that $\phi((\delta_1,\delta_2)) = [\delta]$. Therefore, there is some $\mathbf{w} \in \ZZ^{V(G)}$ such that starting with the trivial divisor on~$G$ and firing $\mathbf{w}(v)$ times at each $v \in V(G)$ yields~$\delta$. Then, starting with the trivial divisor on~$G_1$ (respectively, $G_2$) and firing $\mathbf{w}(v)$ times at each $v \in V(G_1)$ (resp.\ each $v \in V(G_2)$) yields $\delta_1$ (resp.~$\delta_2$). (Here $\mathbf{w}(x)$ and $\mathbf{w}(y)$ are taken to be the value of $\mathbf{w}$ at the merged vertex of~$G$.) Hence $\pi((\delta_1,\delta_2)) = 0$, so $(\delta_1,\delta_2) \in \ker(\pi)$. 

Finally, we show that $\phi$ is surjective. Fix an element of $\mathcal{K}(G;\vd,\vr)$ represented by a divisor~$\delta$ on~$G$ with $\deg_{\vr}(\delta) = 0$. Since each vertex $v \in V(G_1)\setminus\{x\} \subseteq V(G)$ has $\vr(v)$ divisible by~$b$, the weighted sum of~$\delta$ on just the vertices in $V(G_1)\setminus\{x\}$ is $kb$ for some integer~$k$. Similarly, the weighted sum of~$\delta$ on just the vertices in $V(G_2)\setminus\{y\}$ is $\ell a$ for some integer~$\ell$. Since the value of~$\vr$ at the merged vertex is $ab$, if the value of~$\delta$ there is~$m$, we then have that $kb + \ell a + mab = \deg_{\vr}(\delta) = 0$. Working modulo~$a$ and using that $\gcd(a,b)=1$, we get that $a$ divides~$k$, so $k=ca$ for some integer~$c$. We then define divisors $\delta_1$ on~$G_1$ and $\delta_2$ on~$G_2$ by
\[\delta_1(v) = \begin{cases}
 -c & \text{for } v=x, \\
 \delta(v) & \text{for } v \in V(G_1)\setminus\{x\}
\end{cases} \qquad\text{and}\qquad
\delta_2(v) = \begin{cases}
 m+c & \text{for } v=y, \\
 \delta(v) & \text{for } v \in V(G_2)\setminus\{y\},
\end{cases}\]
so that $\deg_{\vr_1}(\delta_1) = -cab + kb = 0$, $\deg_{\vr_2}(\delta_2) = (m+c)ab + \ell a = 0$, and $\phi((\delta_1,\delta_2)) = [\delta]$. 
\end{proof}

\begin{remark}
Theorem~\ref{thm:mergingcriticalgroup} is not true without the assumption that the greatest common divisor of the $\vr$-values at the vertices to be merged is~$1$. Hower \cite[Proposition~7.9]{Hower} showed that the orders of the critical groups are related by 
\[\abs{\mathcal{K}(G;\vd,\vr)} = \abs{\mathcal{K}(G_1;\vd_1,\vr_1)} \cdot \abs{\mathcal{K}(G_2;\vd_2,\vr_2)} \cdot (\gcd(a,b))^2.\]
The explicit relationship between the critical groups when $\gcd(a,b) \neq 1$ is more subtle and beyond the scope of this paper.
\end{remark}

\section{Number of invariant factors of critical groups associated to trees}\label{sec:boundinginvariantfactors}

For the rest of this paper, we focus on trees. We use $\ell(T)$ to denote the number of leaves of a tree~$T$. In this section, we investigate how many invariant factors the critical group of an arithmetical structure on a tree can have, bounding this in terms of a decomposition of the tree into simpler trees. 

We first describe the type of decompositions we consider. A \emph{starlike tree} is a tree that has exactly one vertex of degree at least~$3$; we call this high-degree vertex the \emph{central vertex}. If a tree~$T$ has multiple vertices of degree at least~$3$ (that is, if it is not a starlike tree or a path graph), we define a \emph{starlike splitting} of~$T$ to be a $4$-tuple $(S,T',v_s,v_t)$, where $S$ is a starlike tree, $T'$ is a tree, $v_s$ is a leaf of~$S$ that is adjacent to its central vertex, $v_t$ is a vertex of~$T'$, and $T$ is obtained from $S$ and~$T'$ by merging the vertices $v_s$ and~$v_t$. 

\begin{example}\label{ex:onesplitting}
Let $T$ be the tree shown below. There is a starlike splitting of~$T$ into $S$ and~$T'$, as shown to the right below, obtained by splitting at vertex~$v$. On the other hand, no starlike splitting of~$T$ is obtained by splitting at vertex~$u$.
\end{example}

\begin{center}
\begin{tikzpicture}[scale=.9]
\node at (-2.05,-2) {$T$};
\draw (-2.5,-2.5) -- (-2.5,1.5) -- (3.6,1.5) -- (3.6,-2.5) -- (-2.5,-2.5);
\node[main_node, label=below:{$v$}] (1) at (0,0) {};
\node[main_node] (2) at (-1,0) {};
\node[main_node] (3) at (-1,-1) {};
\node[main_node] (4) at (-1,1) {};
\node[main_node] (5) at (-2,0) {};
\node[main_node, label=above:{$u$}] (6) at (1,0) {};
\node[main_node] (7) at (2,0) {};
\node[main_node] (8) at (3,0) {};
\node[main_node] (9) at (3,1) {};
\node[main_node] (10) at (3,-1) {};
\node[main_node] (11) at (1,-1) {};
\node[main_node] (12) at (1,-2) {};
\node[main_node] (13) at (0,1){};
\node[main_node] (15) at (-1,-2){};

\draw (9.center) -- (8.center) -- (7.center) -- (6.center) -- (1.center) -- (2.center) -- (3.center);
\draw (1.center) -- (13.center);
\draw (2.center) -- (4.center);
\draw (2.center) -- (5.center);
\draw (10.center) -- (8.center);
\draw (6.center) -- (11.center) -- (12.center);
\draw (15.center) -- (3.center);

\node at (4.25,-0.5) {$\longrightarrow$};
\end{tikzpicture}
\begin{tikzpicture}[scale=.9]
\node at (-3.05,-2) {$S$};
\node at (-.05,-2) {$T'$};
\draw (-.5,-2.5) -- (-.5,1.5) -- (3.6,1.5) -- (3.6,-2.5) -- (-.5,-2.5);
\draw (-.5,-2.5) -- (-.5,1.5) -- (-3.5,1.5) -- (-3.5,-2.5) -- (-.5,-2.5);

\node[main_node, label=below:{$v_t$}] (1) at (0,0) {};
\node[main_node] (2) at (-2,0) {};
\node[main_node] (3) at (-2,-1) {};
\node[main_node] (4) at (-2,1) {};
\node[main_node] (5) at (-3,0) {};
\node[main_node] (6) at (1,0) {};
\node[main_node] (7) at (2,0) {};
\node[main_node] (8) at (3,0) {};
\node[main_node] (9) at (3,1) {};
\node[main_node] (10) at (3,-1) {};
\node[main_node] (11) at (1,-1) {};
\node[main_node] (12) at (1,-2) {};
\node[main_node] (13) at (0,1){};
\node[main_node, label=below:{$v_s$}] (14) at (-1,0){};
\node[main_node] (15) at (-2,-2){};

\draw (9.center) -- (8.center) -- (7.center) -- (6.center) -- (1.center);
\draw (14.center) -- (2.center) -- (3.center);
\draw (1.center) -- (13.center);
\draw (2.center) -- (4.center);
\draw (2.center) -- (5.center);
\draw (10.center) -- (8.center);
\draw (6.center) -- (11.center) -- (12.center);
\draw (15.center) -- (3.center);

\end{tikzpicture}
\end{center}

A \emph{starlike decomposition} of a tree~$T$ is a collection $\{S_i\}_{i=1}^k$ of trees with at most one vertex of degree at least~$3$ that result from a sequence of starlike splittings. This means that, for all $i \in [k-1]$, the tree~$S_i$ is starlike with a leaf adjacent to its central vertex. The tree~$S_k$ is either a starlike tree or a path graph. Note that a given tree may have many different starlike decompositions. If $T$ is a starlike tree or a path graph, then its only starlike decomposition is $\{T\}$. 

\begin{example}\label{ex:twosplittings}
Two starlike decompositions of the tree~$T$ from Example~\ref{ex:onesplitting} are shown below. In the first, we split off $S_1$ at~$v$ and then split the remaining $T'$ at~$v'$ to get $S_2$ and~$S_3$. In the second, we first split at~$v'$ and then split at~$v$. These are two different starlike splittings of~$T$.
\end{example}

\begin{center}
\begin{tikzpicture}[scale=.85]
\node at (-2.05,-2) {$T$};
\draw (-2.5,-2.5) -- (-2.5,1.5) -- (3.6,1.5) -- (3.6,-2.5) -- (-2.5,-2.5);
\node[main_node, label=below:{$v$}] (1) at (0,0) {};
\node[main_node] (2) at (-1,0) {};
\node[main_node] (3) at (-1,-1) {};
\node[main_node] (4) at (-1,1) {};
\node[main_node] (5) at (-2,0) {};
\node[main_node] (6) at (1,0) {};
\node[main_node, label=above:{$v'$}] (7) at (2,0) {};
\node[main_node] (8) at (3,0) {};
\node[main_node] (9) at (3,1) {};
\node[main_node] (10) at (3,-1) {};
\node[main_node] (11) at (1,-1) {};
\node[main_node] (12) at (1,-2) {};
\node[main_node] (13) at (0,1){};
\node[main_node] (15) at (-1,-2){};

\draw (9.center) -- (8.center) -- (7.center) -- (6.center) -- (1.center) -- (2.center) -- (3.center);
\draw (1.center) -- (13.center);
\draw (2.center) -- (4.center);
\draw (2.center) -- (5.center);
\draw (10.center) -- (8.center);
\draw (6.center) -- (11.center) -- (12.center);
\draw (15.center) -- (3.center);

\node at (4.65,-0.5) {$\longrightarrow$};
\end{tikzpicture}
\begin{tikzpicture}[scale=.85]
\draw (-.5,-2.5) -- (-.5,1.5) -- (2.5,1.5) -- (2.5,-2.5) -- (-.5,-2.5);
\draw (-.5,-2.5) -- (-.5,1.5) -- (-3.5,1.5) -- (-3.5,-2.5) -- (-.5,-2.5);
\draw (2.5,1.5) -- (4.6,1.5) -- (4.6,-2.5) -- (2.5,-2.5);

\node at (-3.05,-2) {$S_1$};
\node at (-.05,-2) {$S_2$};
\node at (2.95,-2) {$S_3$};

\node[main_node, label=below:{$v_t$}] (1) at (0,0) {};
\node[main_node] (2) at (-2,0) {};
\node[main_node] (3) at (-2,-1) {};
\node[main_node] (4) at (-2,1) {};
\node[main_node] (5) at (-3,0) {};
\node[main_node] (6) at (1,0) {};
\node[main_node, label=above:{$v'_s$}] (7) at (2,0) {};
\node[main_node] (8) at (4,0) {};
\node[main_node] (9) at (4,1) {};
\node[main_node] (10) at (4,-1) {};
\node[main_node] (11) at (1,-1) {};
\node[main_node] (12) at (1,-2) {};
\node[main_node] (13) at (0,1){};
\node[main_node, label=below:{$v_s$}] (14) at (-1,0){};
\node[main_node] (15) at (-2,-2){};
\node[main_node, label=above:{$v'_t$}] (16) at (3,0){};

\draw (9.center) -- (8.center);
\draw (7.center) -- (6.center) -- (1.center);
\draw (14.center) -- (2.center) -- (3.center);
\draw (1.center) -- (13.center);
\draw (2.center) -- (4.center);
\draw (2.center) -- (5.center);
\draw (10.center) -- (8.center);
\draw (6.center) -- (11.center) -- (12.center);
\draw (15.center) -- (3.center);
\draw (16.center) -- (8.center);
\end{tikzpicture}
\end{center}

\begin{center}
\begin{tikzpicture}[scale=.85]
\node at (-2.05,-2) {$T$};
\draw (-2.5,-2.5) -- (-2.5,1.5) -- (3.6,1.5) -- (3.6,-2.5) -- (-2.5,-2.5);
\node[main_node, label=below:{$v$}] (1) at (0,0) {};
\node[main_node] (2) at (-1,0) {};
\node[main_node] (3) at (-1,-1) {};
\node[main_node] (4) at (-1,1) {};
\node[main_node] (5) at (-2,0) {};
\node[main_node] (6) at (1,0) {};
\node[main_node, label=above:{$v'$}] (7) at (2,0) {};
\node[main_node] (8) at (3,0) {};
\node[main_node] (9) at (3,1) {};
\node[main_node] (10) at (3,-1) {};
\node[main_node] (11) at (1,-1) {};
\node[main_node] (12) at (1,-2) {};
\node[main_node] (13) at (0,1){};
\node[main_node] (15) at (-1,-2){};

\draw (9.center) -- (8.center) -- (7.center) -- (6.center) -- (1.center) -- (2.center) -- (3.center);
\draw (1.center) -- (13.center);
\draw (2.center) -- (4.center);
\draw (2.center) -- (5.center);
\draw (10.center) -- (8.center);
\draw (6.center) -- (11.center) -- (12.center);
\draw (15.center) -- (3.center);

\node at (4.65,-0.5) {$\longrightarrow$};
\end{tikzpicture}
\begin{tikzpicture}[scale=.85]
\draw (-.5,-2.5) -- (-.5,1.5) -- (2.5,1.5) -- (2.5,-2.5) -- (-.5,-2.5);
\draw (-.5,-2.5) -- (-.5,1.5) -- (-3.5,1.5) -- (-3.5,-2.5) -- (-.5,-2.5);
\draw (2.5,1.5) -- (4.6,1.5) -- (4.6,-2.5) -- (2.5,-2.5);

\node[main_node, label=below:{$v_s$}] (1) at (0,0) {};
\node[main_node] (2) at (-2,0) {};
\node[main_node] (3) at (-2,-1) {};
\node[main_node] (4) at (-2,1) {};
\node[main_node] (5) at (-3,0) {};
\node[main_node] (6) at (1,0) {};
\node[main_node, label=above:{$v'_t$}] (7) at (2,0) {};
\node[main_node] (8) at (4,0) {};
\node[main_node] (9) at (4,1) {};
\node[main_node] (10) at (4,-1) {};
\node[main_node] (11) at (1,-1) {};
\node[main_node] (12) at (1,-2) {};
\node[main_node] (13) at (-1,1){};
\node[main_node, label=below:{$v_t$}] (14) at (-1,0){};
\node[main_node] (15) at (-2,-2){};
\node[main_node, label=above:{$v'_s$}] (16) at (3,0){};

\draw (9.center) -- (8.center);
\draw (7.center) -- (6.center) -- (1.center);
\draw (14.center) -- (2.center) -- (3.center);
\draw (14.center) -- (13.center);
\draw (2.center) -- (4.center);
\draw (2.center) -- (5.center);
\draw (10.center) -- (8.center);
\draw (6.center) -- (11.center) -- (12.center);
\draw (15.center) -- (3.center);
\draw (16.center) -- (8.center); 

\node at (-3.05,-2) {$S_3$};
\node at (-.05,-2) {$S_2$};
\node at (2.95,-2) {$S_1$};
\end{tikzpicture}
\end{center}

A starlike decomposition of a tree~$T$ is useful in finding representatives of divisors on~$T$ that are nonzero at relatively few vertices. 

\begin{lemma}\label{lem:divisors}
Let $T$ be a finite tree with at least~$2$ vertices, let $\vd \in \ZZ^{V(G)}$, let $\{S_i\}_{i=1}^k$ be a starlike decomposition of~$T$, and let $\ell_i=\ell(S_i)$. Every divisor on~$T$ is equivalent \textup{(}with respect to $\vd$\textup{)} to a divisor that is nonzero at at most $\sum_{i=1}^k(\ell_i-2) + 1$ vertices. 
\end{lemma}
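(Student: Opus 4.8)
The plan is to induct on the number $k$ of pieces in the starlike decomposition, peeling off one starlike splitting at a time. For the base case $k=1$, the tree $T$ is either a path graph or a starlike tree $S_1$. If $T=P_n$ is a path, then $\ell_1=2$ and the bound $\sum(\ell_i-2)+1=1$ says every divisor is equivalent to one supported on a single vertex; this is exactly the content of Remark~\ref{rem:borrowalongtentacle} applied along the whole path (or Example~\ref{ex:pathgraph}). If $T=S_1$ is starlike with central vertex $c$ and leaves $u_1,\dots,u_{\ell_1}$, then each $u_j$ is the end of a tentacle (a chain of degree-$2$ vertices), and by the borrowing procedure of Remark~\ref{rem:borrowalongtentacle} we may clear all chips along each such tentacle except at its "attachment" end. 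After doing this for the tentacle ending in $u_1$, say, we are left with a divisor supported only on the central vertex $c$ and on the vertices $u_2,\dots,u_{\ell_1}$ — that is, at most $\ell_1-1=(\ell_1-2)+1$ vertices. (Here one has to be slightly careful about whether the central vertex is itself at the far end of a tentacle-like arm, but in a starlike tree every edge at $c$ starts a tentacle, and we simply decline to clear one of them past the point adjacent to $c$.)

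For the inductive step, suppose the decomposition arises from first performing a starlike splitting $(S_1,T',v_s,v_t)$, so that $T$ is obtained by merging the leaf $v_s$ of the starlike tree $S_1$ (with $v_s$ adjacent to the central vertex of $S_1$) with a vertex $v_t$ of $T'$, and $\{S_i\}_{i=2}^k$ is an induced starlike decomposition of $T'$. Given a divisor $\delta$ on $T$, I would first work inside the $S_1$ side. The vertex $v_s$ lies on a tentacle of $S_1$ emanating from the central vertex $c_1$; by Remark~\ref{rem:borrowalongtentacle} (borrowing along that tentacle from the $v_s$ end toward $c_1$), I can move all chips off of the interior of that tentacle and off $v_s$ itself, pushing them toward $c_1$ and into $T'$. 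More precisely, borrowing at the vertices of the $c_1$-to-$v_s$ path in the order starting nearest $v_s$ clears chips at every vertex of $S_1$ strictly between $c_1$ and $v_s$ inclusive of $v_s$, at the cost of changing the chip count at $c_1$ and at the $T'$-neighbor of $v_t$. Then, as in the starlike base case, I clear the other tentacles of $S_1$ down to the vertices adjacent to $c_1$, leaving the $S_1$-part of the divisor supported on at most the $\ell_1-1$ vertices consisting of $c_1$ together with one vertex adjacent to $c_1$ on each of the other $\ell_1-1$ arms — but since $v_s$'s arm has been fully cleared, this is $(\ell_1-2)+1 = \ell_1-1$ vertices, none of which lie in $T'$.

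Now the divisor restricted to $T'$ (which has $v_t=v_s$ merged in, and $v_t$ possibly carrying chips) can be treated by the inductive hypothesis applied to $T'$ with its decomposition $\{S_i\}_{i=2}^k$: it is equivalent, using firings supported entirely within $T'$, to a divisor supported on at most $\sum_{i=2}^k(\ell_i-2)+1$ vertices of $T'$. Crucially, firings in $T'$ at vertices other than $v_t$ do not disturb the already-cleared part of $S_1$; and the induction on $T'$ can be arranged so that the single "surviving support vertex" guaranteed by the base/earlier steps can be taken to be $v_t$ itself (or, if not, this costs nothing since we are only counting support size). Combining, the total support is at most $(\ell_1-2) + \bigl(\sum_{i=2}^k(\ell_i-2)+1\bigr) + 1 = \sum_{i=1}^k(\ell_i-2)+1$ once we account for the fact that $v_t=v_s$ is counted once. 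I expect the main obstacle to be the bookkeeping at the merged vertex: one must verify that the tentacle-clearing in $S_1$ and the inductive clearing in $T'$ interact correctly at $v_s=v_t$, in particular that firing/borrowing operations used on one side are legitimate operations on $T$ and only affect the other side through the shared vertex in a controlled way. This is purely a matter of carefully ordering the borrow operations (clear $S_1$'s $v_s$-arm first, pushing into $T'$; then handle $T'$ by induction; then clear $S_1$'s remaining arms, which only touch $c_1$ and its neighbors, never $T'$), and invoking Remark~\ref{rem:borrowalongtentacle} and the definition of equivalence as differing by the image of the generalized Laplacian.
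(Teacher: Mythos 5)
Your overall strategy --- induct on $k$, peel off $S_1$, clear tentacles via Remark~\ref{rem:borrowalongtentacle}, and apply the inductive hypothesis to $T'$ --- is the same as the paper's, but the bookkeeping has a genuine gap that the arithmetic in your write-up papers over. In the base case your procedure leaves chips at the central vertex $c$ \emph{and} at $u_2,\dots,u_{\ell_1}$; that is $\ell_1$ vertices, not the $\ell_1-1$ you claim. In the inductive step your procedure leaves chips at $c_1$ and at one vertex on each of the $\ell_1-1$ arms of $S_1$ other than the one through $v_s$, so the $S_1$-side contributes $\ell_1$ support vertices rather than the $\ell_1-2$ the bound requires; the identity ``$(\ell_1-2)+1=\ell_1-1$'' does not describe the set you actually constructed, and the final ``subtract one because $v_s=v_t$ is counted once'' is spurious, since $v_s$ was cleared on the $S_1$ side and appears (if at all) only in the $T'$ tally. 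As written, your construction proves a bound of $\sum_{i=1}^k(\ell_i-2)+3$, not $\sum_{i=1}^k(\ell_i-2)+1$.

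The missing moves, which are exactly what the paper's proof supplies, are: (i) fully clear one tentacle of $S_1$ \emph{not} containing $v_s$ by borrowing inward all the way to $c_1$ (possible since $\ell_1\geq 3$), so that this arm contributes nothing to the support; and (ii) clear $c_1$ itself by borrowing outward along one of the remaining $\ell_1-2$ tentacles --- first borrow at the neighbor of $c_1$ on that arm enough times to empty $c_1$, then continue outward to push everything to that arm's leaf. Neither operation touches $T'$, since the vertices involved are adjacent only to $c_1$ and to each other, so both can be performed after the inductive step on $T'$ (whose firings at $v_t$ disturb only $c_1$ among the vertices of $S_1\setminus\{v_s\}$, and that disturbance is then swept up by~(ii)). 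With (i) and~(ii) added, the $S_1$-side support drops to $\ell_1-2$ leaves and your count closes to $\sum_{i=1}^k(\ell_i-2)+1$; the same fix (clearing the central vertex outward along a second arm) repairs the base case.
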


\begin{proof}
We proceed by induction on~$k$, the number of trees in the starlike decomposition. If $k=1$, then $T=S_1$ is a starlike tree or path graph with $\ell_1$ leaves. Given a divisor on~$S_1$, we can borrow in along one tentacle until there are no chips at any vertex of the tentacle other than the base, as described in Remark~\ref{rem:borrowalongtentacle}. Then, we can borrow out along all the other tentacles so that there are only chips at $\ell_1-1$ leaf vertices. 

We now prove the inductive step. Suppose $T$ has a starlike decomposition into $k$~trees, where $k>1$. Consider the splitting of~$T$ into the starlike tree~$S_1$ and a tree~$T'$, where $T'$ has a starlike decomposition $\{S_i\}_{i=2}^k$ into $k-1$ trees.

First borrow in along one tentacle of~$S_1$ that does not include $v_s$ so there are no chips along this tentacle except at the base. Consider $T'$, with the $\vd$-values inherited from~$T$. By the inductive hypothesis, we can do some sequence of firings on~$T'$ to move all chips to $\sum_{i=2}^k(\ell_i-2) + 1$ of its vertices. Do the same firings on~$T$. Since the merged vertex~$v$ was the leaf of a length-$1$ tentacle of~$S_1$, we can then borrow out along all other tentacles of~$S_1$ so that the only vertices of $V(S_1)\setminus\{v\}$ with chips are $\ell_{1}-2$ leaves. The number of vertices of~$T$ that have chips is then at most $\ell_{1} - 2 + \sum_{i=2}^k (\ell_i-2) + 1 = \sum_{i=1}^{k} (\ell_i-2) + 1$. 
\end{proof}

\begin{remark}\label{rem:extend}
The next theorem uses an extension operation on arithmetical structures. Given an arithmetical structure $(\vd,\vr)$ on a graph~$G$ and a fixed vertex $v \in V(G)$, we can introduce a new vertex~$y$, connect it to~$v$, and set $\vr(y)=\vr(v)$. It is straightforward to check that this produces an arithmetical structure on the resulting graph and that the critical group of the new arithmetical structure is the same as that of the original arithmetical structure (see \cite[Section~1.8]{L89}).
\end{remark}

\begin{theorem} \label{thm:invariantfactorsboundtrees}
Let $T$ be a finite tree with at least~$2$ vertices, let $\{S_i\}_{i=1}^k$ be a starlike decomposition of~$T$, and let $\ell_i=\ell(S_i)$. 
\begin{enumerate}[label=\textup{(\alph*)},ref=\textup{\alph*}]
\item The number of invariant factors of the critical group of any arithmetical structure on~$T$ is at most $\sum_{i=1}^k(\ell_i-2)$.\label{part:invariantfactorbound}
\item For every integer~$t$ satisfying $0 \leq t \leq \sum_{i=1}^k(\ell_i-2)$, there is an arithmetical structure on~$T$ whose critical group has $t$~invariant factors.\label{part:invariantfactorconstruction}
\end{enumerate}
\end{theorem}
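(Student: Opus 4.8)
The plan is to deduce part~(\ref{part:invariantfactorbound}) from Lemma~\ref{lem:divisors} together with Theorem~\ref{thm:invariantfactorbound}, and to prove part~(\ref{part:invariantfactorconstruction}) by assembling an arithmetical structure on~$T$ one starlike piece at a time, using the extension operation of Remark~\ref{rem:extend} and the merging results Proposition~\ref{prop:mergingconstruction} and Theorem~\ref{thm:mergingcriticalgroup}.

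For part~(\ref{part:invariantfactorbound}), fix an arithmetical structure $(\vd,\vr)$ on~$T$. Although Lemma~\ref{lem:divisors} only records a bound on the size of the support of the reduced divisor, its proof follows a fixed recipe (borrow in along one chosen tentacle of each piece, recurse on the fixed support set produced for~$T'$, then borrow out along the remaining chosen tentacles), so it in fact produces a single set $Z \subseteq V(T)$, independent of the divisor, with $\abs{Z} \le \sum_{i=1}^k(\ell_i-2)+1$, such that every divisor on~$T$ is equivalent with respect to~$\vd$ to one that is nonzero only on~$Z$; moreover $Z$ may be taken nonempty, since the cokernel of $L(T,\vd)$ has a free summand. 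This is exactly the hypothesis of Theorem~\ref{thm:invariantfactorbound}, which then gives that $\mathcal{K}(T;\vd,\vr)$ has at most $\abs{Z}-1 \le \sum_{i=1}^k(\ell_i-2)$ invariant factors.

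For part~(\ref{part:invariantfactorconstruction}), given $t$ with $0 \le t \le \sum_{i=1}^k(\ell_i-2)$, first pick integers $t_1,\dots,t_k$ with $0 \le t_i \le \ell_i-2$ and $t_1+\cdots+t_k = t$ (greedily). On each piece I will put an arithmetical structure with critical group $(\ZZ/2\ZZ)^{t_i}$. If $S_i$ is the path~$S_k$, then $t_i = 0$ is forced and any arithmetical structure works, since its critical group is trivial (Example~\ref{ex:pathgraph}). If $S_i$ is starlike, I build the structure first on the star $\Star_{\ell_i}$: designate the leaf corresponding to the merge vertex~$v_s$ as~$\lambda_0$, and assign leaf $\vd$-values equal to $2$ at~$\lambda_0$ and at $t_i+1$ further leaves (and $1$ at the rest) when $t_i$ is even, or equal to $4$ at~$\lambda_0$ and at one further leaf, $2$ at $t_i$ further leaves, and $1$ at the rest, when $t_i$ is odd; in both cases this uses at most $\ell_i$ leaves since $t_i \le \ell_i-2$. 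The center $\vd$-value is then forced to be the integer $\sum_j 1/\vd(\lambda_j)$, one computes $\vr(\lambda_0) = 1$, and a short Smith normal form computation (or \cite[Theorem~1]{A24}) shows the critical group is $(\ZZ/2\ZZ)^{t_i}$. Applying Remark~\ref{rem:extend} repeatedly to grow the branches other than the one at~$\lambda_0$ turns $\Star_{\ell_i}$ into~$S_i$ without changing the critical group or the label at~$\lambda_0$. Finally I reassemble~$T$ from the~$S_i$ along the splittings of the starlike decomposition, applying Proposition~\ref{prop:mergingconstruction} from the innermost split outward; at each merge the $\vr$-value on the $S_i$ side is~$1$, so the two merge labels are coprime and Theorem~\ref{thm:mergingcriticalgroup} applies, giving inductively $\mathcal{K}(T;\vd,\vr) \cong \bigoplus_{i=1}^k \mathcal{K}(S_i) \cong (\ZZ/2\ZZ)^{t_1+\cdots+t_k} = (\ZZ/2\ZZ)^t$, which has exactly $t$ invariant factors.

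The routine point in part~(\ref{part:invariantfactorbound}) is observing that the proof of Lemma~\ref{lem:divisors} yields a uniform support set rather than merely a size bound. The main obstacle is in part~(\ref{part:invariantfactorconstruction}): producing, for a prescribed leaf carrying label~$1$, a star arithmetical structure whose critical group is exactly the desired elementary abelian $2$-group (the parity of~$t_i$ forcing the two cases above), and — more conceptually — arranging the pieces so that $\bigoplus_i \mathcal{K}(S_i)$ has the correct number of invariant factors, which is why every piece is made an elementary abelian $2$-group rather than, say, cyclic (the number of invariant factors is not additive over direct sums in general).
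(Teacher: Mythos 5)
Your proposal is correct and follows essentially the same route as the paper: part (a) via Lemma~\ref{lem:divisors} and Theorem~\ref{thm:invariantfactorbound}, and part (b) by placing $(\ZZ/2\ZZ)^{t_i}$ structures on the starlike pieces (your leaf $\vd$-value prescription is exactly the paper's $\vr$-value prescription in disguise) and reassembling with Proposition~\ref{prop:mergingconstruction} and Theorem~\ref{thm:mergingcriticalgroup}. Your explicit observation that the proof of Lemma~\ref{lem:divisors} produces a \emph{uniform} support set $Z$, as required by the hypothesis of Theorem~\ref{thm:invariantfactorbound}, is a careful touch that the paper leaves implicit.
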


\begin{proof}
By Lemma~\ref{lem:divisors}, every divisor on~$T$ is equivalent under firing and borrowing to one that is nonzero at at most $\sum_{i=1}^k(\ell_i-2) + 1$ vertices. Using Theorem~\ref{thm:invariantfactorbound}, this implies that the critical group of any arithmetical structure on~$T$ has at most $\sum_{i=1}^k(\ell_i-2)$ nontrivial invariant factors, proving~(\ref{part:invariantfactorbound}).

Given an integer~$t$ satisfying $0 \leq t \leq \sum_{i=1}^k(\ell_i-2)$, we will construct an arithmetical structure on~$T$ whose critical group has exactly $t$~invariant factors. Let $\{t_i\}_{i=1}^k$ be integers satisfying $t = \sum_{i=1}^k$ and $0 \leq t_i \leq \ell_i-2$ for all $i \in [k]$. We first construct an arithmetical structure on a star graph with~$\ell_i$ leaves that has critical group $(\ZZ/2\ZZ)^{t_i}$. If $t_i=0$, take the Laplacian arithmetical structure, with each $\vr$-value equal to~$1$. If $t_i$ is even and positive, use the $\vr$-value~$2$ at the central vertex with~$1$ at $t_i+2$ of the leaves and $2$ at the remaining leaves. If $t_i$ is odd, use the $\vr$-value~$4$ at the central vertex with $1$ at $2$~of the leaves, $2$ at $t_i$~of the leaves, and $4$ at the remaining leaves. By \cite[Theorem~1]{A24}, each of these arithmetical structures has critical group $(\ZZ/2\ZZ)^{t_i}$. 

By extending at leaves, as described in Remark~\ref{rem:extend}, we can create tentacles of any desired lengths without changing the corresponding critical groups. Therefore, for all $i \in [k]$, we can obtain an arithmetical structure on~$S_i$ with critical group $(\ZZ/2\ZZ)^{t_i}$. Moreover, since each arithmetical structure we constructed on a star graph has $\vr$-value~$1$ at some leaf, we can arrange that for each $i<k$ the leaf of~$S_i$ that will be merged has $\vr$-value~$1$. 

Now we recompose $T$ from the~$S_i$. At each step, the $\vr$-value at the leaf of~$S_i$ to be merged is~$1$, so Theorem~\ref{thm:mergingcriticalgroup} gives that the construction of Proposition~\ref{prop:mergingconstruction} yields an arithmetical structure whose critical group is the direct sum of the previous critical groups. Thus we obtain an arithmetical structure on~$T$ with critical group $\bigoplus_{i=1}^k(\ZZ/2\ZZ)^{t_i}\cong(\ZZ/2\ZZ)^{t}$, proving~(\ref{part:invariantfactorconstruction}). 
\end{proof}

Using Theorem~\ref{thm:invariantfactorsboundtrees}, we can characterize which trees only admit arithmetical structures with cyclic critical groups and which only admit arithmetical structures with trivial critical groups.

\begin{corollary}\label{cor:alwayscyclic}
If $T$ is a finite tree, then the critical group of every arithmetical structure on~$T$ is cyclic if and only if 
\begin{enumerate}[label=\textup{(\alph*)},ref=\textup{\alph*}]
\item $T$ has does not have non-adjacent vertices of degree at least~$3$, and\label{part:noadjacentofdegree3}
\item $T$ has no vertices of degree at least~$4$.\label{part:nodegree4}
\end{enumerate}
Furthermore, the critical group of every arithmetical structure on~$T$ is trivial if and only if $T$ is a path graph.
\end{corollary}

\begin{proof}
By Theorem~\ref{thm:invariantfactorsboundtrees}(\ref{part:invariantfactorconstruction}), it suffices to show that $\sum_{i=1}^k(\ell_i-2) \leq 1$ if and only if both (\ref{part:noadjacentofdegree3}) and~(\ref{part:nodegree4}) hold. If (\ref{part:noadjacentofdegree3}) and~(\ref{part:nodegree4}) hold, then $T$ must be a path graph, be a starlike tree with three leaves, or have two adjacent vertices of degree~$3$ and no other vertices of degree at least~$3$. If $T$ is a path graph, then $\sum_{i=1}^k(\ell_i-2) = \ell_1-2 \leq 0$. If $T$ is a starlike tree with three leaves, then 
\[\sum_{i=1}^k(\ell_i-2) = 3-2 = 1.\] 
If $T$ has two adjacent vertices of degree~$3$ and no other vertices of degree at least~$3$, then it has a starlike decomposition into a starlike graph with three leaves and a path graph, so thus 
\[\sum_{i=1}^k(\ell_i-2) = (3-2)+(2-2) = 1.\] 

If (\ref{part:noadjacentofdegree3}) and~(\ref{part:nodegree4}) do not both hold, then $T$ has two non-adjacent vertices of degree at least~$3$ or has a vertex of degree at least~$4$. If $T$ has two non-adjacent vertices of degree at least~$3$, then in the initial splitting of~$T$ into $S_1$ and~$T'$ we have that $T'$ is not a path graph, so $\sum_{i=2}^k(\ell_i-2) \geq 1$. As $\ell_1 \geq 3$, this means
\[\sum_{i=1}^k(\ell_i-2) = (\ell_1-2) + \sum_{i=2}^k(\ell_i-2) \geq 1+1 = 2.\]
Now suppose $T$ has a vertex~$v$ of degree at least~$4$ and does not have non-adjacent vertices of degree at least~$3$. If $T$ is a starlike tree, then $k=1$, so $\sum_{i=1}^k(\ell_i-2) = \ell(T)-2 \geq 2$. If $T$ is not a starlike tree, then $v$ is adjacent to exactly one other vertex of degree at least~$3$. This means $T$ has a starlike decomposition $\{S_1, S_2\}$ with $\ell_1\geq4$ and $\ell_2\geq2$, so 
\[\sum_{i=1}^k(\ell_i-2) = (\ell_1-2) + (\ell_2-2) \geq 2+0 = 2.\]

To prove the last statement, by Theorem~\ref{thm:invariantfactorsboundtrees}(\ref{part:invariantfactorconstruction}) it suffices to show that $\sum_{i=1}^k(\ell_i-2) \leq 0$ if and only if $T$ is a path graph. If $T$ is a path graph, then $\sum_{i=1}^k(\ell_i-2) = \ell_1-2 \leq 0$. Conversely, if $\sum_{i=1}^k(\ell_i-2) \leq 0$, then $k=1$ and $\ell_1\leq2$; thus $T$ is a path graph.
\end{proof}

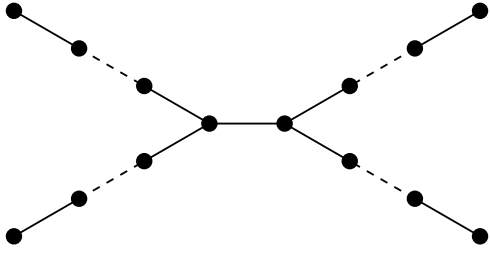
\begin{figure} 
\begin{center}
\begin{tikzpicture}[scale=1]
\node[main_node] (1) at (0,0) {};
\node[main_node] (2a) at (150:1) {};
\node[main_node] (2b) at (150:2) {};
\node[main_node] (2c) at (150:3) {};
\node[main_node] (3a) at (210:1) {};
\node[main_node] (3b) at (210:2) {};
\node[main_node] (3c) at (210:3) {};
\node[main_node] (4a) at ($(1,0)+(30:1)$) {};
\node[main_node] (4b) at ($(1,0)+(30:2)$) {};
\node[main_node] (4c) at ($(1,0)+(30:3)$) {};
\node[main_node] (5a) at ($(1,0)+(330:1)$) {};
\node[main_node] (5b) at ($(1,0)+(330:2)$) {};
\node[main_node] (5c) at ($(1,0)+(330:3)$) {};
\node[main_node] (6) at (1,0) {};
\draw (2a.center) -- (1.center) -- (3a.center);
\draw (4a.center) -- (6.center) -- (5a.center);
\draw (1.center) -- (6.center);
\draw (2c.center) -- (2b.center);
\draw (3c.center) -- (3b.center);
\draw (4c.center) -- (4b.center);
\draw (5c.center) -- (5b.center);
\draw[dashed] (2b.center) -- (2a.center);
\draw[dashed] (3b.center) -- (3a.center);
\draw[dashed] (4b.center) -- (4a.center);
\draw[dashed] (5b.center) -- (5a.center);
\end{tikzpicture}
\end{center}
\caption{Tree for which all arithmetical structures have cyclic critical group.}\label{fig:xwing}
\end{figure}

\begin{remark}
Corollary~\ref{cor:alwayscyclic} implies that the trees for which all arithmetical structures have cyclic critical group are exactly those shown in Figure~\ref{fig:xwing}, where the possibility that drawn tentacles are not present is allowed. 
\end{remark}

\section{Splitting irregularity numbers and \texorpdfstring{$2$}{2}-matching numbers}\label{sec:invariants}

In this section, we introduce an invariant of trees that we call the splitting irregularity number and show how it is related to the $2$-matching number. We then reinterpret Theorem~\ref{thm:invariantfactorsboundtrees} in terms of splitting irregularity numbers and $2$-matching numbers. 

\subsection{The splitting irregularity number} 
Suppose $(S,T',v_s,v_t)$ is a starlike splitting of a tree~$T$. If $v_t$ has degree~$1$ in~$T'$, we call this splitting \emph{regular}; if it has degree greater than~$1$, we call the splitting \emph{irregular}. In the first starlike decomposition in Example~\ref{ex:twosplittings}, the first splitting is irregular because $v_t$ has degree~$2$, while second splitting is regular because $v'_t$ has degree~$1$. We define the \emph{splitting irregularity number} of a tree~$T$ to be the number of irregular splittings in any starlike decomposition. In the next proposition, we show that this number does not depend on the sequence of starlike splittings that decompose~$T$ and is thus a well-defined invariant of~$T$.

\enlargethispage*{\baselineskip}

\begin{proposition}\label{prop:iotawelldefined}
Let $T$ be a finite tree, let $\{S_i\}_{i=1}^k$ be a starlike decomposition of~$T$ resulting from a sequence of starlike splittings of which $\iota$ are irregular, and let $\ell_i=\ell(S_i)$. We have that 
\begin{enumerate}[label=\textup{(\alph*)},ref=\textup{\alph*}]
\item $\ell(T)-2=\sum_{i=1}^k(\ell_i-2)+\iota$,\label{part:iotaandleaves} 
\item $\sum_{i=1}^k(\ell_i-2)$ is independent of the choice of starlike decomposition, and\label{part:leavessumindependence}
\item $\iota$ is independent of the sequence of starlike splittings.\label{part:iotawelldefined}
\end{enumerate} 
\end{proposition}

\begin{proof}
We first prove part~(\ref{part:iotaandleaves}). If $k=1$, then the statement is clearly true, so let $k \geq 2$. Suppose $(S,T',v_s,v_t)$ is a starlike splitting of~$T$. We have that 
\[\ell(T) - 2 = \ell(S) - 2 + \ell(T') - 2 + \begin{cases}
 1 & \text{if $(S,T',v_s,v_t)$ is irregular,} \\
 0 & \text{if $(S,T',v_s,v_t)$ is regular,}
\end{cases}\]
because $v_t$ is a leaf exactly when this splitting is regular. If $T$ has starlike decomposition $\{S_i\}_{i=1}^k$, then we can take $S=S_1$ so that $\ell(S)=\ell_1$ and take $T'$ to be a tree with induced starlike decomposition $\{S_i\}_{i=2}^k$. Since $\iota$ is the number of irregular splittings in the starlike decomposition, using the above equation $k-1$ times gives $\ell(T)-2=\sum_{i=1}^k(\ell_i-2)+\iota$.

Part~(\ref{part:leavessumindependence}) follows from Theorem~\ref{thm:invariantfactorsboundtrees}. More specifically, suppose a tree~$T$ has two starlike decompositions $\{S_{i}\}_{i=1}^{k}$ and $\{S'_{i}\}_{i=1}^{k'}$, with $\ell_i=\ell(S_i)$ and $\ell'_i=\ell(S_i')$. Theorem~\ref{thm:invariantfactorsboundtrees}(\ref{part:invariantfactorconstruction}) applied to $\{S_{i}\}_{i=1}^{k}$ gives that there is an arithmetical structure on~$T$ whose critical group has $\sum_{i=1}^k(\ell_i-2)$ invariant factors, while Theorem~\ref{thm:invariantfactorsboundtrees}(\ref{part:invariantfactorbound}) applied to $\{S'_{i}\}_{i=1}^{k'}$ gives that this critical group has at most $\sum_{i=1}^{k'}(\ell'_i-2)$ invariant factors. Thus $\sum_{i=1}^k(\ell_i-2) \leq \sum_{i=1}^{k'}(\ell'_i-2)$. Similarly, $\sum_{i=1}^{k'}(\ell'_i-2) \leq \sum_{i=1}^{k}(\ell_i-2)$, so therefore $\sum_{i=1}^k(\ell_i-2) = \sum_{i=1}^{k'}(\ell'_i-2)$.

Parts (\ref{part:iotaandleaves}) and~(\ref{part:leavessumindependence}) imply part~(\ref{part:iotawelldefined}). 
\end{proof}

Proposition~\ref{prop:iotawelldefined}(\ref{part:iotawelldefined}) shows that a tree~$T$ has a well-defined splitting irregularity number, which we henceforth denote by $\iota(T)$. For the tree~$T$ in Example~\ref{ex:twosplittings}, we have $\iota(T)=1$ since there is one irregular splitting in the given starlike decomposition. 

The next proposition determines the possible values of the splitting irregularity number in terms of the number of leaves of a tree. It also characterizes which trees have positive splitting irregularity number. 

\begin{proposition} 
The splitting irregularity number has the following properties.
\begin{enumerate}[label=\textup{(\alph*)},ref=\textup{\alph*}]
\item If $T$ is a tree with at least~$2$ vertices, then $0 \leq \iota(T) \leq \ell(T)/2-1$.\label{part:iotabounds}
\item Given integers $a$ and~$b$ with $0 \leq a \leq b/2-1$, there is a tree~$T$ with $\iota(T)=a$ and $\ell(T)=b$.\label{part:iotaconstruction}
\item If $T$ is a tree, then $\iota(T)$ is positive if and only if $T$ has adjacent vertices of degree at least~$3$.\label{part:iotapositivity}
\end{enumerate}
\end{proposition}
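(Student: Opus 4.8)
The plan is to prove the three parts of the proposition using the identity $\sum_{i=1}^k(\ell_i-2) = \ell(T)-2-\iota(T)$ from Proposition~\ref{prop:iotabasicproperties}(\ref{part:iotaandleaves}) together with the additivity of leaf counts under starlike splittings.

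For part~(\ref{part:iotabounds}), the lower bound $\iota(T)\geq 0$ is immediate since $\iota(T)$ counts irregular splittings. For the upper bound, I would rewrite the identity as $\iota(T) = \ell(T)-2-\sum_{i=1}^k(\ell_i-2)$ and observe that every tree~$S_i$ in a starlike decomposition (being a starlike tree or a path graph with at least~$2$ vertices) satisfies $\ell(S_i)\geq 2$, so $\ell_i-2\geq 0$; moreover, for $i<k$ the tree~$S_i$ is starlike with a leaf adjacent to its central vertex, hence has at least~$3$ leaves, giving $\ell_i-2\geq 1$. This is not quite enough by itself. Instead I would run induction on~$k$: each starlike splitting $(S,T',v_s,v_t)$ replaces~$T$ by $S$ and~$T'$, and one checks $\ell(S)\geq 3$ while the splitting is irregular only if $v_t$ has degree at least~$2$ in~$T'$, in which case $S$ contributes at least one ``extra'' leaf beyond the two endpoints of its merging tentacle; tracking this shows each irregular splitting consumes at least~$2$ leaves of~$T$, yielding $2\iota(T)\leq \ell(T)-2$, i.e.\ $\iota(T)\leq \ell(T)/2-1$. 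More carefully: by induction, if $(S,T',v_s,v_t)$ is the first (irregular) splitting, then $\ell(T)=\ell(S)+\ell(T')-1$ when irregular, and $\iota(T)=1+\iota(T')\leq 1+\ell(T')/2-1=\ell(T')/2\leq (\ell(T)-\ell(S)+1)/2\leq(\ell(T)-3+1)/2=\ell(T)/2-1$; the regular case is similar but easier.

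For part~(\ref{part:iotaconstruction}), given $0\leq a\leq b/2-1$, I would explicitly construct a tree. Take a ``caterpillar-like'' tree built from a path with $a$ designated internal vertices each of degree~$3$, arranged so that no two are adjacent (ensuring each of the $a$ splittings at these vertices is irregular), then attach pendant paths so the total leaf count becomes~$b$. Concretely, one can build a path $P$ long enough that $a$ of its internal vertices are pairwise non-adjacent, attach one pendant edge at each of those~$a$ vertices (each such attachment creating a degree-$3$ vertex whose removal-splitting is irregular since the split vertex still has degree~$\geq 2$ in the remainder), and then append enough extra pendant edges at the two ends to reach exactly $b$ leaves; the constraint $a\leq b/2-1$ is precisely what makes room for this since the $a$ degree-$3$ vertices plus the two path-ends already account for the minimum leaf budget. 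One then verifies via the identity that $\iota(T)=\ell(T)-2-\sum(\ell_i-2)$ equals~$a$, since the decomposition strips off~$a$ copies of a $3$-leaf star (each via an irregular splitting) leaving a path.

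For part~(\ref{part:iotapositivity}), this should follow quickly. If $T$ has no adjacent vertices of degree at least~$3$, then I claim every starlike splitting in any decomposition is regular: the vertex $v_t$ of~$T'$ that gets merged corresponds, before splitting, to a neighbor of the central vertex of~$S_1$ in~$T$, and if $v_t$ had degree $\geq 2$ in~$T'$ it would be a vertex of degree $\geq 3$ in~$T$ adjacent to the central vertex (also of degree $\geq 3$), contradicting the hypothesis; hence $\iota(T)=0$. Conversely, if $T$ has two adjacent vertices $u,w$ each of degree at least~$3$, perform a starlike decomposition that splits off a starlike tree~$S_1$ centered at~$u$ using the edge $uw$ as (part of) the merging tentacle; then $v_t=w$ has degree at least~$2$ in the remaining tree, so this splitting is irregular, giving $\iota(T)\geq 1$. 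Since $\iota(T)$ is well defined by Proposition~\ref{prop:iotabasicproperties}(\ref{part:iotawelldefined}), exhibiting one decomposition with an irregular splitting suffices.

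I expect part~(\ref{part:iotaconstruction}) to be the main obstacle: getting the explicit family right and double-checking that the non-adjacency of the degree-$3$ vertices forces all $a$ splittings to be irregular while the pendant-edge bookkeeping delivers exactly $b$ leaves requires care, whereas parts (\ref{part:iotabounds}) and~(\ref{part:iotapositivity}) are fairly direct consequences of the counting identity.
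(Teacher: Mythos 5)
Your part~(\ref{part:iotabounds}) is essentially the paper's proof: the same induction on the length of the decomposition, using $\ell(T)=\ell(S)+\ell(T')-1$ and $\ell(S)\geq 3$ in the irregular case. Your part~(\ref{part:iotapositivity}) is correct in the forward direction; in the converse there is a small gap, since the branch of~$T$ at~$u$ away from~$w$ need not be starlike (it may contain further vertices of degree at least~$3$), so the single splitting you describe may not exist as the first step of a decomposition. This is fixable — e.g., run any decomposition and observe that as long as $u$ and~$w$ remain together their degrees are preserved, so the splitting that eventually separates them (or that has one of them as~$v_t$) is forced to be irregular — which is essentially what the paper does.

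The real problem is part~(\ref{part:iotaconstruction}), where your construction is wrong because you have the adjacency/irregularity relationship backwards. A splitting $(S,T',v_s,v_t)$ is irregular precisely when the merged vertex has degree at least~$3$ in~$T$ \emph{and} is adjacent to the central vertex of~$S$, which also has degree at least~$3$; this is exactly why your part~(\ref{part:iotapositivity}) works. Your caterpillar has its $a$ degree-$3$ vertices pairwise non-adjacent, so by your own part~(\ref{part:iotapositivity}) it satisfies $\iota(T)=0$, not $\iota(T)=a$: every splitting peels off a starlike piece at a vertex of degree~$2$, hence is regular. To force $a$ irregular splittings you need to create $a$ adjacencies between high-degree vertices. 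The paper does this by starting with~$P_3$ and wedging $a$ copies of $\Star_3$ (each via a leaf adjacent to its center) onto \emph{non-leaf} vertices of the growing tree — each such attachment makes the merged vertex have degree at least~$3$ adjacent to the new star's center, so undoing it is irregular — and then wedging the remaining $b-2a-2$ copies of $\Star_3$ onto leaves, each of which is a regular splitting and adds one leaf, giving $\ell(T)=2+2a+(b-2a-2)=b$. Your appended pendant edges at the path ends would also need care (attaching several pendants at an end vertex creates a new high-degree vertex whose adjacencies affect~$\iota$), but that issue is moot given the main error.
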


\begin{proof}
Let $\{S_i\}_{i=1}^k$ be a starlike decomposition of~$T$. To show~(\ref{part:iotabounds}), we proceed by induction on~$k$. If $k=1$, then $\iota(T)=0$ and $\ell(T) \geq 2$, so the result holds. Now suppose $k \geq 2$. If the splitting of~$T$ into $S_1$ and~$T'$ is regular, then using the inductive hypothesis we have 
\[0 \leq \iota(T) = \iota(T') \leq \ell(T')/2-1 < \ell(T)/2-1.\]
If the splitting of~$T$ into $S$ and~$T'$ is irregular, then using the inductive hypothesis and $\ell(T) \geq \ell(T')+2$ we have 
\[0 \leq \iota(T) = \iota(T')+1 \leq \ell(T')/2 \leq \ell(T)/2-1.\]

To show~(\ref{part:iotaconstruction}), let $a$ and~$b$ be integers with $0 \leq a \leq b/2-1$, which implies $2a+2 \leq b$. Let $P_3$ denote the path graph with three vertices and $\Star_3$ the star graph with three leaves. To obtain $T$, take the wedge sum of $P_3$ at its center vertex with $\Star_3$ at one of its leaves, then take the wedge sum of the resulting tree at one of its non-leaf vertices with another $\Star_3$ at one of its leaves, repeating this process until $a$~copies of $\Star_3$ have been added. Then take the wedge sum of the resulting tree at one of its leaves with $\Star_3$ at one of its leaves, repeating this process until $b-2a-2$ more copies of $\Star_3$ have been added. Since this construction can be undone via a starlike decomposition with exactly $a$~irregular splittings, we have that $\iota(T)=a$. The number of leaves in the tree resulting from this construction is $\ell(T) = 2+2a+b-2a-2 = b$.

To show~(\ref{part:iotapositivity}), note that if $\iota(T)$ is positive then it is clear that $T$ has adjacent vertices of degree at least~$3$. In the other direction, suppose $T$ has a pair of adjacent vertices of degree at least~$3$. When doing a starlike decomposition, if all previous splittings were regular, the splitting that separates these vertices would be irregular, making $\iota(T)$ positive. 
\end{proof}

The value of $\iota(T)$ is generally not obvious without finding a starlike decomposition of~$T$. Consider the following example. 
\begin{center}
\begin{tikzpicture}[scale=.9]
\node at (-2.1,-1) {$T_1$};
\draw (-2.6,-1.5) -- (-2.6,1.5) -- (2.6,1.5) -- (2.6,-1.5) -- (-2.6,-1.5);
\node[main_node] (1) at (0,0) {};
\node[main_node] (2) at (-1, -0) {};
\node[main_node] (3) at (-1, -1) {};
\node[main_node] (4) at (-1,1) {};
\node[main_node] (5) at (-2, 0) {};
\node[main_node] (6) at (1,0) {};
\node[main_node] (7) at (2,0) {};
\node[main_node] (11) at (1,-1) {};
\node[main_node] (13) at (0,1){};

\draw (7.center) -- (6.center) -- (1.center) -- (2.center) -- (3.center);
\draw (2.center) -- (4.center);
\draw (2.center) -- (5.center);
\draw (6.center) -- (11.center);
\draw (1.center) -- (13.center);
\end{tikzpicture} \quad vs. \quad
\begin{tikzpicture}[scale=.9]
\node at (-2.1,-1) {$T_2$};
\draw (-2.6,-1.5) -- (-2.6,1.5) -- (2.6,1.5) -- (2.6,-1.5) -- (-2.6,-1.5);
\node[main_node] (1) at (0,0) {};
\node[main_node] (2) at (-1,0) {};
\node[main_node] (3) at (0,-1) {};
\node[main_node] (4) at (-1,1) {};
\node[main_node] (5) at (-2,0) {};
\node[main_node] (6) at (1,0) {};
\node[main_node] (7) at (2,0) {};
\node[main_node] (11) at (1,-1) {};
\node[main_node] (13) at (0,1){};

\draw (7.center) -- (6.center) -- (1.center) -- (2.center);
\draw (2.center) -- (4.center);
\draw (2.center) -- (5.center);
\draw (6.center) -- (11.center);
\draw (3.center) -- (1.center) -- (13.center);
\end{tikzpicture}
\end{center}
Here $T_1$ and~$T_2$ are trees with the same degree sequence: one vertex of degree~$4$, two vertices of degree~$3$, and six leaves. However, $\iota(T_1)=1$ while $\iota(T_2)=2$. Therefore, $\iota(T)$ depends on more than just the degree sequence of~$T$; one must consider the adjacencies between high-degree vertices and what happens to these vertices under starlike splittings.

\subsection{\texorpdfstring{$2$}{2}-matching numbers}
The splitting irregularity number of a tree can also be computed in terms of its $2$-matching number. In the context of critical ideals, $2$-matching numbers were previously considered in~\cite{CV15}. A \emph{$2$-matching} of a graph~$G$ is a subset of the edge set $E(G)$ with the property that, for each vertex~$v \in V(G)$, at most~$2$ edges in this subset are incident to~$v$. The \emph{$2$-matching number} $\nu_2(G)$ is the maximum cardinality of a $2$-matching of~$G$. In the special case of a starlike tree~$S$, a maximum $2$-matching is given by choosing all edges of~$S$ except for any $\ell(S)-2$ edges incident to the central vertex, meaning $\nu_2(S) = \abs{E(S)}-\ell(S)+2$. As another example, if $T$ is the tree in Figure~\ref{fig:2matchingexample}, then $\nu_2(T)=14$. 

For trees, the $2$-matching number is additive under starlike splittings.

\begin{figure}[t]
\centering
\begin{tikzpicture}[scale=.45]
\node[main_node] (1) at (0,0) {};
\node[main_node] (2) at (30:2) {};
\node[main_node] (3) at (150:2) {};
\node[main_node] (4) at (270:2) {};
\node[main_node] (5) at ($(2)+(-15:2)$) {};
\node[main_node] (6) at ($(2)+(75:2)$) {};
\node[main_node] (7) at ($(3)+(195:2)$) {};
\node[main_node] (8) at ($(3)+(105:2)$) {};
\node[main_node] (9) at ($(4)+(225:2)$) {};
\node[main_node] (10) at ($(4)+(-45:2)$) {}; 
\node[main_node] (11) at ($(5)+(30:2)$) {};
\node[main_node] (12) at ($(5)+(-60:2)$) {};
\node[main_node] (13) at ($(6)+(30:2)$) {};
\node[main_node] (14) at ($(6)+(120:2)$) {};
\node[main_node] (15) at ($(8)+(60:2)$) {};
\node[main_node] (16) at ($(8)+(150:2)$) {};
\node[main_node] (17) at ($(7)+(150:2)$) {};
\node[main_node] (18) at ($(7)+(240:2)$) {}; 
\node[main_node] (19) at ($(10)+(0:2)$) {};
\node[main_node] (20) at ($(10)+(270:2)$) {}; 
\node[main_node] (21) at ($(9)+(180:2)$) {};
\node[main_node] (22) at ($(9)+(270:2)$) {};
 
\draw[highlight] (1.center) -- (2.center);
\draw[highlight] (1.center) -- (3.center);
\draw (1.center) -- (4.center);
\draw (2.center) -- (5.center);
\draw (2.center) -- (6.center);
\draw (3.center) -- (7.center);
\draw (3.center) -- (8.center);
\draw (4.center) -- (9.center);
\draw (4.center) -- (10.center);
\draw[highlight] (5.center) -- (11.center);
\draw[highlight] (5.center) -- (12.center);
\draw[highlight] (6.center) -- (13.center);
\draw[highlight] (6.center) -- (14.center);
\draw[highlight] (8.center) -- (15.center);
\draw[highlight] (8.center) -- (16.center);
\draw[highlight] (7.center) -- (17.center);
\draw[highlight] (7.center) -- (18.center);
\draw[highlight] (10.center) -- (19.center);
\draw[highlight] (10.center) -- (20.center);
\draw[highlight] (9.center) -- (21.center);
\draw[highlight] (9.center) -- (22.center);

\node[main_node] at (1) {};
\node[main_node] at (2) {};
\node[main_node] at (3) {};
\node[main_node] at (4) {};
\node[main_node] at (5) {};
\node[main_node] at (6) {};
\node[main_node] at (7) {};
\node[main_node] at (8) {};
\node[main_node] at (9) {};
\node[main_node] at (10) {}; 
\node[main_node] at (11) {};
\node[main_node] at (12) {};
\node[main_node] at (13) {};
\node[main_node] at (14) {};
\node[main_node] at (15) {};
\node[main_node] at (16) {};
\node[main_node] at (17) {};
\node[main_node] at (18) {}; 
\node[main_node] at (19) {};
\node[main_node] at (20) {}; 
\node[main_node] at (21) {};
\node[main_node] at (22) {};
\end{tikzpicture}
\caption{The thick, red edges form a maximal $2$-matching of this tree.}\label{fig:2matchingexample}
\end{figure}

\begin{lemma}\label{lem:2matchingsplitting}
If $T$ is a finite tree and $(S,T',v_s,v_t)$ is a starlike splitting of~$T$, then $\nu_2(T) = \nu_2(S) + \nu_2(T')$.
\end{lemma}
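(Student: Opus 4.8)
The plan is to prove $\nu_2(T) = \nu_2(S) + \nu_2(T')$ by establishing both inequalities separately. Recall the setup of a starlike splitting: $S$ is a starlike tree with central vertex $c$, $v_s$ is a leaf of $S$ adjacent to $c$, $v_t$ is a vertex of $T'$, and $T$ is obtained by merging $v_s$ and $v_t$ into a single vertex $w$. The edge set $E(T)$ decomposes as the disjoint union of $E(S)$ and $E(T')$ under this identification, with the single edge $cv_s$ of $S$ becoming the edge $cw$ in $T$. The key structural fact I will use is that $v_s$ has degree $1$ in $S$, so in $T$ the vertex $w$ has degree $1 + \deg_{T'}(v_t)$, and only one $S$-edge (namely $cw$) is incident to $w$.

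For the inequality $\nu_2(T) \geq \nu_2(S) + \nu_2(T')$, I take a maximum $2$-matching $M_S$ of $S$ and a maximum $2$-matching $M_{T'}$ of $T'$ and claim $M = M_S \cup M_{T'}$ is a $2$-matching of $T$. The only vertex where incidences from both sides could accumulate is $w$. Here $M_{T'}$ contributes at most $2$ edges at $v_t$, and $M_S$ contributes at most one edge at $v_s$ (since $\deg_S(v_s) = 1$). So the degree of $w$ in $M$ could be as large as $3$, which is a problem. The fix: if $M_S$ uses the edge $cv_s$, I replace $M_S$ by $M_S \setminus \{cv_s\} \cup \{cv_s'\}$ for some other leaf $v_s'$ adjacent to $c$ with $cv_s' \notin M_S$ — such a leaf exists when $S$ has enough leaves, and when it does not (e.g. $S = P_3$ with $v_s$ an endpoint, or other small cases) I argue directly. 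Actually the cleaner approach: observe that in a starlike tree $S$ with central vertex $c$, there is always a maximum $2$-matching avoiding the edge $cv_s$ unless forced; more precisely, I will show that $\nu_2(S)$ is achieved by some $2$-matching in which $v_s$ has degree $0$, because we may route the ``slack'' at $c$ through the tentacle containing $v_s$. Then $M = M_S \cup M_{T'}$ has $w$-degree at most $0 + 2 = 2$, giving a valid $2$-matching of size $\nu_2(S) + \nu_2(T')$.

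For the reverse inequality $\nu_2(T) \leq \nu_2(S) + \nu_2(T')$, I take a maximum $2$-matching $M$ of $T$ and split it as $M_S = M \cap E(S)$ and $M_{T'} = M \cap E(T')$. Both are automatically $2$-matchings of $S$ and $T'$ respectively (restricting to a subgraph can only decrease incidence counts), so $|M_S| \leq \nu_2(S)$ and $|M_{T'}| \leq \nu_2(T')$, and $|M| = |M_S| + |M_{T'}| \leq \nu_2(S) + \nu_2(T')$. This direction is immediate and requires no case analysis.

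The main obstacle is the lower-bound direction, specifically verifying that a maximum $2$-matching of the starlike tree $S$ can always be chosen so that the designated leaf $v_s$ is incident to no chosen edge. I expect to handle this by the explicit description of maximum $2$-matchings of starlike trees already noted before Lemma~\ref{lem:2matchingsplitting}: along each tentacle from a leaf to the central vertex one takes all edges, and at the central vertex one drops $\ell(S) - 2$ of the incident edges; since $v_s$ is itself a leaf adjacent to $c$, its tentacle is just the single edge $cv_s$, and we are free to include $cv_s$ among the dropped edges as long as $\ell(S) - 2 \geq 1$, i.e. $\ell(S) \geq 3$, which holds for every starlike tree. This makes the degree of $v_s$ in the chosen matching zero, and the rest of the argument goes through cleanly. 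I should double-check the edge case where $S$ has exactly $3$ leaves and the combinatorics of which single edge at $c$ to drop, but this is a routine verification rather than a genuine difficulty.
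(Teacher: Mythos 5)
Your proposal is correct and follows essentially the same route as the paper: the upper bound by restricting a maximum $2$-matching of $T$ to $S$ and $T'$, and the lower bound by choosing a maximum $2$-matching of $S$ that drops the edge $cv_s$ as one of the $\ell(S)-2$ discarded edges at the central vertex (possible since $\ell(S)\geq 3$), so that the merged vertex receives at most two incidences from the $T'$-side alone. The intermediate hesitation about ``fixing'' $M_S$ is unnecessary, but your final argument is the paper's argument.
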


\begin{proof}
Given a $2$-matching of~$T$, restricting induces $2$-matchings of $S$ and~$T'$, so we immediately have that $\nu_2(T) \leq \nu_2(S) + \nu_2(T')$. A maximal $2$-matching on~$S$ is given by using all edges except the edge incident to the leaf where the merging takes place and $\ell(S)-3$ other edges incident to the central vertex. Taking the union of this with any maximal $2$-matching on~$T'$ gives a $2$-matching on~$T$, proving that $\nu_2(T) \geq \nu_2(S) + \nu_2(T')$ and hence that $\nu_2(T) = \nu_2(S) + \nu_2(T')$.
\end{proof}

For a tree, there is a relationship between its splitting irregularity number and $2$-matching number.

\begin{proposition}\label{prop:iotaand2matchingnumber}
If $T$ is a finite tree with at least~$2$ vertices, then 
\[\ell(T)-2-\iota(T) = \abs{E(T)}-\nu_2(T).\] 
\end{proposition}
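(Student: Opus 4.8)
The plan is to prove the identity by induction on the number of trees $k$ in a starlike decomposition $\{S_i\}_{i=1}^k$ of $T$, leveraging the additivity results already established: Lemma~\ref{lem:2matchingsplitting} for the $2$-matching number and Proposition~\ref{prop:iotabasicproperties}(\ref{part:iotaandleaves}) for the leaf count. Since both $\iota(T)$ and $\nu_2(T)$ are well-defined invariants of $T$ (independent of the choice of decomposition), it suffices to verify the identity for any one starlike decomposition.

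First I would dispatch the base case $k=1$, where $T$ is either a path graph or a starlike tree. For a path graph on $n \geq 2$ vertices, $\ell(T)=2$, $\iota(T)=0$, $\abs{E(T)}=n-1$, and $\nu_2(T)=n-1$ (every edge can be included since each vertex has degree at most~$2$), so both sides are~$0$. For a starlike tree~$S$, we have $\iota(S)=0$, and as noted in the text preceding Lemma~\ref{lem:2matchingsplitting}, $\nu_2(S)=\abs{E(S)}-\ell(S)+2$, so $\abs{E(S)}-\nu_2(S)=\ell(S)-2=\ell(S)-2-\iota(S)$, as desired.

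For the inductive step with $k \geq 2$, I would write $T$ as a starlike splitting $(S,T',v_s,v_t)$ where $S=S_1$ and $T'$ has the induced starlike decomposition $\{S_i\}_{i=2}^k$. Note $\abs{E(T)}=\abs{E(S)}+\abs{E(T')}$ since merging a single vertex adds no edges. By Lemma~\ref{lem:2matchingsplitting}, $\nu_2(T)=\nu_2(S)+\nu_2(T')$, so $\abs{E(T)}-\nu_2(T)=(\abs{E(S)}-\nu_2(S))+(\abs{E(T')}-\nu_2(T'))$. The first summand equals $\ell(S)-2$ by the base case applied to the starlike tree~$S$, and the second equals $\ell(T')-2-\iota(T')$ by the inductive hypothesis. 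Meanwhile, by Proposition~\ref{prop:iotabasicproperties}(\ref{part:iotaandleaves}) applied to the single splitting $(S,T',v_s,v_t)$, we have $\ell(S)-2+\ell(T')-2=\ell(T)-2-\varepsilon$ where $\varepsilon$ is $1$ if the splitting is irregular and $0$ if regular, and $\iota(T)=\iota(T')+\varepsilon$. Combining these yields $\abs{E(T)}-\nu_2(T)=(\ell(T)-2-\varepsilon)-\iota(T')=\ell(T)-2-\iota(T)$, completing the induction.

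I do not anticipate a serious obstacle here, since the heavy lifting is already done by Lemma~\ref{lem:2matchingsplitting} and Proposition~\ref{prop:iotabasicproperties}; the main thing to be careful about is the bookkeeping of the regular-versus-irregular case in the splitting equation and making sure the base-case formula $\nu_2(S)=\abs{E(S)}-\ell(S)+2$ for starlike trees is invoked correctly (including verifying it once for a path graph, which is the $\ell=2$ degenerate case). One could alternatively bypass the induction entirely: summing $\nu_2(S_i)=\abs{E(S_i)}-\ell_i+2$ over all $i$ via repeated application of Lemma~\ref{lem:2matchingsplitting} gives $\nu_2(T)=\sum_i\abs{E(S_i)}-\sum_i(\ell_i-2)=\abs{E(T)}-\sum_i(\ell_i-2)$, and then Proposition~\ref{prop:iotabasicproperties}(\ref{part:iotaandleaves}) identifies $\sum_i(\ell_i-2)$ with $\ell(T)-2-\iota(T)$, giving the result in one line; I would likely present this shorter argument instead.
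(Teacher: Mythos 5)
Your argument is correct, and the short version you say you would present is exactly the paper's proof: sum $\nu_2(S_i)=\abs{E(S_i)}-\ell_i+2$ over the pieces of a starlike decomposition, apply Lemma~\ref{lem:2matchingsplitting} to get $\abs{E(T)}-\nu_2(T)=\sum_i(\ell_i-2)$, and invoke Proposition~\ref{prop:iotabasicproperties}(\ref{part:iotaandleaves}). The inductive version is just an unrolled form of the same argument, so there is nothing substantively different here.
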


\begin{proof}
Let $\{S_i\}_{i=1}^k$ be a starlike decomposition of~$T$, and let $\ell_i=\ell(S_i)$. Since each~$S_i$ is a starlike tree or a path graph, we have that $\nu_2(S_i) = \abs{E(S_i)}-\ell_i+2$ and hence $\ell_i-2 = \abs{E(S_i)}-\nu_2(S_i)$. Summing over all~$i$ and using Lemma~\ref{lem:2matchingsplitting}, we obtain
\[\sum_{i=1}^k(\ell_i-2) = \sum_{i=1}^k(\abs{E(S_i)}-\nu_2(S_i)) = \abs{E(T)}-\nu_2(T).\] 
Combining this with Proposition~\ref{prop:iotawelldefined}, we get $\ell(T)-2-\iota(T) = \abs{E(T)}-\nu_2(T)$. 
\end{proof}

As a consequence of Propositions \ref{prop:iotawelldefined} and~\ref{prop:iotaand2matchingnumber}, we obtain the following reinterpretation of Theorem~\ref{thm:invariantfactorsboundtrees}, which recovers and extends \cite[Theorem~3.9]{CV15}.

\begin{theorem}\label{thm:invariantfactorsboundtrees2}
Let $T$ be a finite tree with at least~$2$ vertices. 
\begin{enumerate}[label=\textup{(\alph*)},ref=\textup{\alph*}]
\item The number of invariant factors of the critical group of any arithmetical structure on~$T$ is at most $\ell(T)-2-\iota(T)$, or equivalently at most $\abs{E(T)}-\nu_2(T)$.\label{part:invariantfactorbound2}
\item For any integer~$t$ satisfying $0 \leq t \leq \ell(T)-\iota(T)-2$ there is an arithmetical structure on~$T$ whose critical group has $t$~invariant factors.\label{part:invariantfactorconstruction2}
\end{enumerate}
\end{theorem}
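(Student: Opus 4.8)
The plan is to deduce this statement directly from Theorem~\ref{thm:invariantfactorsboundtrees} by rewriting the quantity $\sum_{i=1}^k(\ell_i-2)$ that appears there in the two equivalent forms $\ell(T)-2-\iota(T)$ and $\abs{E(T)}-\nu_2(T)$. First I would fix a starlike decomposition $\{S_i\}_{i=1}^k$ of~$T$ and set $\ell_i=\ell(S_i)$; since $T$ has at least~$2$ vertices, such a decomposition exists and all the invariants appearing in the statement are defined.

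The key identity is the chain
\[\sum_{i=1}^k(\ell_i-2) = \ell(T)-2-\iota(T) = \abs{E(T)}-\nu_2(T).\]
The first equality is Proposition~\ref{prop:iotabasicproperties}(\ref{part:iotaandleaves}), using Proposition~\ref{prop:iotabasicproperties}(\ref{part:iotawelldefined}) to identify the number of irregular splittings in the chosen decomposition with the well-defined invariant~$\iota(T)$; the second equality is exactly Proposition~\ref{prop:iotaand2matchingnumber}. With this in hand, part~(\ref{part:invariantfactorbound2}) is immediate from Theorem~\ref{thm:invariantfactorsboundtrees}(\ref{part:invariantfactorbound}), and part~(\ref{part:invariantfactorconstruction2}) is immediate from Theorem~\ref{thm:invariantfactorsboundtrees}(\ref{part:invariantfactorconstruction}), in each case simply substituting the displayed equalities for the bound $\sum_{i=1}^k(\ell_i-2)$.

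Since all of the substance has already been established, I do not expect a genuine obstacle; the proof is essentially bookkeeping. The only point requiring a word of care is that Theorem~\ref{thm:invariantfactorsboundtrees} is phrased relative to a fixed starlike decomposition, so I would note that its conclusions are independent of that choice --- which is exactly Proposition~\ref{prop:iotabasicproperties}(\ref{part:leavessumindependence}) --- and check that the boundary cases (for instance $T=P_2$, where $\iota(T)=0$ and $\abs{E(T)}=\nu_2(T)=1$) are consistent with the asserted formulas.
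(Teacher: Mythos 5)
Your proposal is correct and matches the paper's own proof: both deduce the theorem from Theorem~\ref{thm:invariantfactorsboundtrees} by rewriting $\sum_{i=1}^k(\ell_i-2)$ as $\ell(T)-2-\iota(T)$ via Proposition~\ref{prop:iotabasicproperties} and as $\abs{E(T)}-\nu_2(T)$ via Proposition~\ref{prop:iotaand2matchingnumber}. Your added remark about independence of the chosen decomposition is a reasonable extra precaution but not needed beyond what those propositions already guarantee.
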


\begin{proof}
The bound of $\ell(T)-2-\iota(T)$ in part~(\ref{part:invariantfactorbound2}) follows immediately from Theorem~\ref{thm:invariantfactorsboundtrees}(\ref{part:invariantfactorbound}) and Proposition~\ref{prop:iotawelldefined}(\ref{part:iotaandleaves}). Proposition~\ref{prop:iotaand2matchingnumber} gives that this is equivalent to $\abs{E(T)}-\nu_2(T)$. Part~(\ref{part:invariantfactorconstruction2}) follows from Theorem~\ref{thm:invariantfactorsboundtrees}(\ref{part:invariantfactorconstruction}). 
\end{proof}

We conclude this section by using Proposition~\ref{prop:iotaand2matchingnumber} to establish a result about how the splitting irregularity number changes under subdivision of an edge. This will be used later, in the proof of Theorem~\ref{thm:constructgroups}.

\begin{lemma}\label{lem:2matchingsubdivision}
If $T$ is a finite tree and $\widetilde{T}$ is obtained from~$T$ by subdividing a single edge, then either
\begin{enumerate}[label=\textup{(\alph*)},ref=\textup{\alph*}]
\item $\nu_2\bigl(\widetilde{T}\bigr) = \nu_2(T)$ and $\iota\bigl(\widetilde{T}\bigr) = \iota(T)-1$, or 
\item $\nu_2\bigl(\widetilde{T}\bigr) = \nu_2(T)+1$ and $\iota\bigl(\widetilde{T}\bigr) = \iota(T)$. 
\end{enumerate}
\end{lemma}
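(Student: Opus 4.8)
The plan is to use Proposition~\ref{prop:iotaand2matchingnumber} to translate the statement into a purely combinatorial claim about $2$-matching numbers, and then analyze the effect of a single edge subdivision on $\nu_2$ directly. Write $T$ for the original tree and $\widetilde{T}$ for the subdivision of the edge $e = \{u,w\}$, so that $\widetilde{T}$ has one new vertex $z$ of degree~$2$ and edges $\{u,z\},\{z,w\}$ in place of $e$. Since $\abs{E(\widetilde{T})} = \abs{E(T)}+1$ and $\ell(\widetilde{T}) = \ell(T)$ (subdividing an internal edge does not change the number of leaves; if $T$ has an edge that can be subdivided then $T$ has at least two vertices, and the subdivided edge is not incident to creating a new leaf), Proposition~\ref{prop:iotaand2matchingnumber} gives $\ell(T)-2-\iota(\widetilde{T}) = \abs{E(T)}+1-\nu_2(\widetilde{T})$, i.e.\ $\iota(\widetilde{T}) = \iota(T) - 1 + \bigl(\nu_2(\widetilde{T}) - \nu_2(T)\bigr)$. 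So the whole lemma reduces to showing that $\nu_2(\widetilde{T}) - \nu_2(T) \in \{0,1\}$, with case~(a) corresponding to the difference being~$0$ and case~(b) to the difference being~$1$.

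Next I would prove the two inequalities $\nu_2(T) \leq \nu_2(\widetilde{T}) \leq \nu_2(T)+1$. For the lower bound: given a maximum $2$-matching $F$ of $T$, if $e \notin F$ then $F$ is already a $2$-matching of $\widetilde{T}$ (the new vertex $z$ is incident to no edge of $F$); if $e \in F$, replace $e$ by one of the two new edges, say $\{u,z\}$ — this is still a $2$-matching of $\widetilde{T}$ of the same size, since $z$ now has one incident edge and the degrees at $u$ and $w$ are unchanged. Hence $\nu_2(\widetilde{T}) \geq \nu_2(T)$. For the upper bound: given a maximum $2$-matching $\widetilde{F}$ of $\widetilde{T}$, consider its intersection with $\{\{u,z\},\{z,w\}\}$. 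If $\widetilde{F}$ contains both, delete both and insert $e$: the result is a $2$-matching of $T$ of size $\abs{\widetilde{F}}-1$ (the degree at $u$ and at $w$ each drop by~$1$ from removing the two edges and rise by~$1$ from inserting $e$, so they are unchanged, and $z$ disappears). If $\widetilde{F}$ contains at most one of the two, delete whichever (if any) is present: we get a $2$-matching of $T$ of size $\geq \abs{\widetilde{F}} - 1$, and in fact we can do slightly better, but for the upper bound $\nu_2(T) \geq \abs{\widetilde{F}} - 1 = \nu_2(\widetilde{T}) - 1$ suffices. Combining, $\nu_2(\widetilde{T}) \in \{\nu_2(T), \nu_2(T)+1\}$, and plugging back into the displayed identity for $\iota(\widetilde{T})$ yields exactly the two alternatives (a) and (b).

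I expect the only real subtlety to be the bookkeeping on leaf counts and on which edges are "subdividable": one should note that if $T$ has exactly one vertex then there is no edge to subdivide, and the statement is vacuous, so we may assume $\abs{V(T)} \geq 2$ and Proposition~\ref{prop:iotaand2matchingnumber} applies to both $T$ and $\widetilde{T}$; and subdividing an edge never changes the leaf set because the new degree-$2$ vertex is not a leaf and the endpoints $u,w$ keep their degrees. The $2$-matching manipulations themselves are routine local degree checks. The main obstacle, such as it is, is simply being careful that each swap produces a valid $2$-matching (no vertex acquires a third incident edge) — in particular, when $e \in F$ and we replace $e$ by $\{u,z\}$, we must check $u$ had at most two incident edges in $F$, which holds since $F$ was a $2$-matching, and the count at $u$ is unchanged. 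Once these checks are in place, the conclusion is immediate from the algebraic identity relating $\iota$, $\ell$, $\abs{E}$, and $\nu_2$.
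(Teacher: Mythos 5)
Your proposal is correct and follows essentially the same route as the paper: establish $\nu_2(T) \leq \nu_2\bigl(\widetilde{T}\bigr) \leq \nu_2(T)+1$ by local modifications of $2$-matchings near the subdivided edge, then convert to the statement about $\iota$ via the identity of Proposition~\ref{prop:iotaand2matchingnumber}. The only cosmetic difference is in the lower bound, where you replace $e$ by a single new edge while the paper includes both new edges; both give valid $2$-matchings and the same conclusion.
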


\begin{proof}
Suppose $\widetilde{T}$ is obtained by subdividing an edge $e \in E(T)$ to get edges $e_1, e_2 \in E\bigl(\widetilde{T}\bigr)$. Given a $2$-matching on~$T$, define a $2$-matching on~$\widetilde{T}$ by including both $e_1$ and~$e_2$ if $e$ is included in the $2$-matching on~$T$, including neither $e_1$ nor~$e_2$ if $e$ is not included in the $2$-matching on~$T$, and including other edges if and only if they are in the $2$-matching on~$T$. Thus, to each $2$-matching on~$T$, there is a $2$-matching on~$\widetilde{T}$ that includes at least as many edges, so $\nu_2\bigl(\widetilde{T}\bigr) \geq \nu_2(T)$. Given a $2$-matching on~$\widetilde{T}$, define a $2$-matching on~$T$ by including $e$ if and only if $e_1$ and~$e_2$ are both included in the $2$-matching on~$\widetilde{T}$ and including other edges if and only if they are in the $2$-matching on~$\widetilde{T}$. Thus, to each $2$-matching on~$\widetilde{T}$ there is a $2$-matching on~$T$ that has no less than one fewer edge, so $\nu_2(T) \geq \nu_2\bigl(\widetilde{T}\bigr)-1$. It follows that $\nu_2\bigl(\widetilde{T}\bigr) = \nu_2(T)$ or $\nu_2\bigl(\widetilde{T}\bigr) = \nu_2(T)+1$. 

Proposition~\ref{prop:iotaand2matchingnumber} gives that $\iota(T) = \ell(T)-2-\abs{E(T)}+\nu_2(T)$. We know that $\ell\bigl(\widetilde{T}\bigr) = \ell(T)$ and $\abs[\big]{E\bigl(\widetilde{T}\bigr)} = \abs{E(T)}+1$. If $\nu_2\bigl(\widetilde{T}\bigr) = \nu_2(T)$, then $\iota\bigl(\widetilde{T}\bigr) = \iota(T)-1$. If $\nu_2\bigl(\widetilde{T}\bigr) = \nu_2(T)+1$, then $\iota\bigl(\widetilde{T}\bigr) = \iota(T)$. 
\end{proof}

\section{Arithmetical structures on trees with prescribed critical groups}\label{sec:constructgroups}

In the previous section, we showed that the critical group of each arithmetical structure on a tree~$T$ has at most $\ell(T)-2-\iota(T)$ invariant factors. In this section, we prove a partial converse: Given a tree~$T$ and a finite abelian group $\mathcal{G}$ that has at most $\ell(T)-2-\beta$ invariant factors with $0 \leq \beta \leq \iota(T)$, there is a subdivision $\widetilde{T}$ of~$T$ with $\iota\bigl(\widetilde{T}\bigr) = \beta$ and an arithmetical structure on~$T'$ that has $\mathcal{G}$ as critical group.

In order to provide a way of constructing arithmetical structures on trees with prescribed critical groups, we must understand critical groups of arithmetical structures on starlike trees. The next theorem follows immediately from \cite[Theorem~2.1]{L92} and shows how to compute the critical group of an arithmetical structure on a starlike graph. Before becoming aware of~\cite{L92}, we previously proved a special case of this result in \cite[Theorem~1]{A24}; the techniques used there can also be extended to prove this result.

\begin{theorem}\label{thm:criticalgroupstarlike}
Let $S$ be a starlike tree with central vertex~$v_0$ and $\ell$~leaves $v_1, v_2, \dots, v_\ell$. If $(\vd,\vr)$ is an arithmetical structure on~$S$, then
\[\mathcal{K}(S;\vd,\vr) \oplus (\ZZ/r_0\ZZ)^2 \cong \bigoplus_{i=1}^\ell \ZZ/d^*_i\ZZ,\]
where $r_i=\vr(v_i)$ and $d^*_i = r_0/r_i$. 
\end{theorem}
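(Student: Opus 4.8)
The plan is to reduce, via Lemma~\ref{lem:matrixreduction3}, to an explicit Smith normal form computation for the $(\ell+1)\times(\ell+1)$ matrix $L^*(S,\vd)$, and then to mirror the matrix manipulations in the proof of \cite[Theorem~1]{A24}, which treats the case where $S$ is a star graph. Since $L(S,\vd)$ is equivalent under integer row and column operations to $L^*(S,\vd)\oplus I_{n-\ell-1}$ by Lemma~\ref{lem:matrixreduction3}, the two matrices have the same cokernel, so $\mathbb{Z}^{V(S)}/\im(L(S,\vd))\cong\operatorname{coker}(L^*(S,\vd))$ and hence $\operatorname{coker}(L^*(S,\vd))\cong\mathbb{Z}\oplus\mathcal{K}(S;\vd,\vr)$. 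The desired statement is therefore equivalent to the assertion that the $(\ell+3)\times(\ell+3)$ block-diagonal matrix $L^*(S,\vd)\oplus\diag(r_0,r_0)$ has the same cokernel as $\diag(d^*_1,\dots,d^*_\ell,0)\oplus I_2$; equivalently, that the Smith normal forms of $L^*(S,\vd)\oplus\diag(r_0,r_0)$ and $\diag(d^*_1,\dots,d^*_\ell,0)$ agree up to a string of~$1$'s.

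Before manipulating matrices I would record the arithmetic facts about the $\vr$-values along a tentacle that are needed beyond the star-graph case. Let $r_{i,1}=\vr(v_{i,1})$ denote the $\vr$-value of the neighbor of $v_0$ on the $i$-th tentacle, so that $L^*(S,\vd)$ has $-e_i$ in the last column of row~$i$, where $e_i=r_{i,1}/r_i$. The divisibility condition of the arithmetical structure at the degree-$2$ vertices along the tentacle shows that the greatest common divisor of two consecutive $\vr$-values is independent of the position along the tentacle, and the condition at the leaf~$v_i$ shows this common value is $r_i$. Consequently $r_i\mid r_0$ (so $d^*_i=r_0/r_i$ is a positive integer), $r_i\mid r_{i,1}$ (so $e_i$ is a positive integer), and $\gcd(d^*_i,e_i)=1$. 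Moreover $r_i$ divides every $\vr$-value on the $i$-th tentacle, and every vertex of~$S$ lies on some tentacle, so the condition $\gcd_{v\in V(S)}\vr(v)=1$ is equivalent to $\gcd(r_1,\dots,r_\ell)=1$, which gives $\lcm(d^*_1,\dots,d^*_\ell)=r_0/\gcd(r_1,\dots,r_\ell)=r_0$. When $S$ is a star graph every tentacle has length~$1$, each $e_i$ equals~$1$, and these facts are immediate; supplying them for general tentacles is essentially the only new ingredient.

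With these facts in hand I would run the reduction as in \cite[Theorem~1]{A24}: use the $-1$ entries in the row of $L^*(S,\vd)$ indexed by~$v_0$ to clear the leaf columns, and then use the two $\diag(r_0,r_0)$ blocks to decouple the $v_0$-row and $v_0$-column. The one place the starlike case differs from the star case is the presence of the entries $-e_i$ (rather than $-1$) in the last column of $L^*(S,\vd)$; these are handled using $\gcd(d^*_i,e_i)=1$ together with $d^*_i\mid r_0$, which is precisely what lets the $i$-th leaf column and an $r_0$-column absorb $-e_i$. The two extra $\mathbb{Z}/r_0\mathbb{Z}$ summands are exactly what is consumed in making $v_0$ contribute no torsion, and what survives is $\diag(d^*_1,\dots,d^*_\ell)$ together with an identity block of size~$2$ and a single zero row and column, giving $\operatorname{coker}(L^*(S,\vd))\oplus(\mathbb{Z}/r_0\mathbb{Z})^2\cong\mathbb{Z}\oplus\bigoplus_{i=1}^\ell\mathbb{Z}/d^*_i\mathbb{Z}$, which is the theorem.

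I expect the only real obstacle to be the bookkeeping around the $-e_i$ entries and the way the two $\mathbb{Z}/r_0\mathbb{Z}$ blocks interact with the $v_0$-row and column: in the star case the relevant column is the all-$(-1)$ vector and the reduction is essentially immediate, whereas here one must verify that after all the operations exactly $\diag(d^*_1,\dots,d^*_\ell)$ --- and not some coarser diagonal --- remains. As a consistency check, Lorenzini's formula gives $|\mathcal{K}(S;\vd,\vr)|=\prod_{v}\vr(v)^{\deg(v)-2}=r_0^{\ell-2}/\prod_i r_i$, so $|\mathcal{K}(S;\vd,\vr)|\cdot r_0^2=\prod_i d^*_i$, confirming that the orders on the two sides match and hence that the reduction must terminate as claimed.
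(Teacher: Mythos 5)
Your setup matches the paper's through the first two steps: you reduce to $L^*(S,\vd)$ via Lemma~\ref{lem:matrixreduction3}, and your derivation of the arithmetic facts along a tentacle (the gcd of consecutive $\vr$-values is constant along the tentacle and equals $r_i$, hence $d^*_i$ and $e_i$ are integers with $\gcd(d^*_i,e_i)=1$) is correct; this is indeed the new input beyond the star-graph case, and you actually prove it where the paper only asserts it. The gap is in everything after that. The heart of the theorem is identifying the Smith normal form of the $(\ell+3)\times(\ell+3)$ matrix $L^*(S,\vd)\oplus\diag(r_0,r_0)$, and you assert this follows from a row/column reduction that is ``essentially immediate'' in the star case. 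It is not: the $-1$ entries in the $v_0$-row cannot simply ``clear the leaf columns'' (adding $d^*_i$ times the $v_0$-row to row~$i$ fills in the entire row rather than isolating $d^*_i$, and the $-e_i$ cannot be cleared against $d^*_i$ since $d^*_i\nmid e_i$ in general), and \cite[Theorem~1]{A24} is not proved by such a reduction. It is proved by computing determinantal divisors, i.e., by showing that the gcd $D_k$ of the $k\times k$ minors of this matrix equals the gcd of all products of $k-2$ of the $d^*_i$, for every $k$. That prime-by-prime computation is the actual content of the proof, and your proposal never engages with it.

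Concretely, one ingredient you cannot avoid is missing: for each prime power $p^a$ exactly dividing $r_0$, at least \emph{two} of the $d^*_i$ must be divisible by $p^a$ (an extension of \cite[Lemma~2]{A24}, obtained from the relation $d_0r_0=\sum_i r_{i,1}$ at the central vertex). Your observation $\lcm(d^*_1,\dots,d^*_\ell)=r_0$ supplies only one such index; without a second, $(\ZZ/r_0\ZZ)^2$ could not even be a direct summand of $\bigoplus_i\ZZ/d^*_i\ZZ$ and the theorem would be false, and this fact is exactly what controls the top determinantal divisors $D_{\ell+1}$ and $D_{\ell+2}$. Finally, the order count via Lorenzini's formula confirms $\abs{\mathcal{K}(S;\vd,\vr)}\cdot r_0^2=\prod_i d^*_i$, but it cannot ``confirm that the reduction must terminate as claimed'': groups of equal order need not be isomorphic ($\ZZ/4\ZZ$ versus $(\ZZ/2\ZZ)^2$), so matching orders does not certify the invariant factors.
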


\begin{example}
Consider the starlike tree~$T$ and arithmetical structure $(\vd,\vr)$ given in Figure~\ref{fig:broomgraph}. This has $r_0=324$ and 
\[(d_i^*)_{i=1}^4 = (324/108, 324/18, 324/1,324/1) = (3,18,324,324).\] 
Theorem~\ref{thm:criticalgroupstarlike} gives 
\[\mathcal{K}(T;\vd,\vr) \oplus (\ZZ/324\ZZ)^2 \cong \ZZ/3\ZZ \oplus \ZZ/18\ZZ \oplus \ZZ/324\ZZ \oplus \ZZ/324\ZZ,\] 
so $\mathcal{K}(T;\vd,\vr) \cong \ZZ/3\ZZ \oplus \ZZ/18\ZZ$.
\end{example}

We next show how to construct an arithmetical structure with a prescribed critical group on a broom graph. A \emph{broom graph} with $j$~prongs is a starlike tree with $j+1$ tentacles of which at most one has length greater than~$1$.

\begin{proposition}\label{prop:broomgraphs}
Let $m$ be a positive integer, and let $\mathcal{G}$ be a finite abelian group with at most~$m$ invariant factors. There is an arithmetical structure on a broom graph with $m+1$ prongs whose critical group is~$\mathcal{G}$.
\end{proposition}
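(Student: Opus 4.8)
The plan is to construct the arithmetical structure explicitly and then read off its critical group from Theorem~\ref{thm:criticalgroupstarlike}. Label the central vertex of the broom $v_0$, its $m+1$ prongs, and its handle $v_0 \sim u_1 \sim u_2 \sim \dots \sim u_k$ (the handle may degenerate to $k=1$), so that the $\ell = m+2$ leaves are the prongs together with $u_k$. For any arithmetical structure $(\vd,\vr)$ on this tree, writing $r_0 = \vr(v_0)$, Theorem~\ref{thm:criticalgroupstarlike} gives $\mathcal{K} \oplus (\ZZ/r_0\ZZ)^2 \cong \bigoplus_{i=1}^{m+2}\ZZ/d^*_i\ZZ$ with $d^*_i = r_0/\vr(\text{leaf}_i)$. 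Since the intermediate handle values do not appear here, the leaf values alone determine the critical group, and the handle stays available to make the remaining structure conditions hold.

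Assume first that $\mathcal{G}$ is nontrivial, and write $\mathcal{G} \cong \bigoplus_{i=1}^s \ZZ/g_i\ZZ$ with $1 < g_1 \mid g_2 \mid \dots \mid g_s$ and $s \le m$. Set $r_0 = g_s^2$. Then $\mathcal{G} \oplus (\ZZ/r_0\ZZ)^2 = \bigl(\bigoplus_{i=1}^s \ZZ/g_i\ZZ\bigr) \oplus (\ZZ/r_0\ZZ)^2$ is a direct sum of $s+2 \le m+2$ cyclic groups, each of the form $\ZZ/d\ZZ$ with $d \mid r_0$. Assign leaf values to match: let $s$ prongs carry the values $r_0/g_1,\dots,r_0/g_s$, let one prong carry the value $1$, let the remaining $m-s$ prongs carry the value $r_0$, and arrange for $u_k$ to carry the value $1$. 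The corresponding $d^*$-values are $g_1,\dots,g_s$, then $r_0$, then $1$ with multiplicity $m-s$, then $r_0$, so $\bigoplus_{i=1}^{m+2}\ZZ/d^*_i\ZZ \cong \mathcal{G}\oplus(\ZZ/r_0\ZZ)^2$. Once we know this arises from an honest arithmetical structure, Theorem~\ref{thm:criticalgroupstarlike} together with cancellation of the finite group $(\ZZ/r_0\ZZ)^2$ yields $\mathcal{K} \cong \mathcal{G}$.

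The remaining conditions are at $v_0$ and along the handle. Each prong value divides $r_0$, so a prong gets the positive integer $\vd$-value $r_0/\vr(\text{prong})$; and since some prong has value $1$, the $\vr$-values have no common factor. Let $A$ be the sum of the prong values. The condition at $v_0$ is $r_0 \mid A + \vr(u_1)$, forcing $\vr(u_1) \equiv -A \pmod{r_0}$; on the other hand, since the tentacle recurrence $\vd(u_j)\vr(u_j) = \vr(u_{j-1}) + \vr(u_{j+1})$ preserves the greatest common divisor of consecutive values, terminating the handle at a leaf of value $1$ requires $\gcd(\vr(u_1), r_0) = 1$, i.e.\ $\gcd(A, r_0) = 1$. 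Reconciling these is the heart of the proof, and is exactly why we took $r_0 = g_s^2$: a direct computation gives $A = g_s\bigl(\sum_{i=1}^s g_s/g_i + (m-s)g_s\bigr) + 1$, so $A \equiv 1$ modulo every prime dividing $g_s$, whence $\gcd(A, g_s^2) = 1$. We therefore take $\vr(u_1)$ to be the representative of $-A$ in $\{1,\dots,r_0-1\}$, which is automatically coprime to $r_0$.

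Finally, realize the handle by a Euclidean-style descent: starting from the coprime pair $(r_0, \vr(u_1))$, replace a coprime pair $(a,b)$ with $b>1$ by $\bigl(b,\ \lceil a/b\rceil b - a\bigr)$, which is again coprime with strictly smaller second entry, using the positive $\vd$-value $\lceil a/b\rceil$ at that vertex; when the second entry reaches $1$, make that vertex a leaf, with $\vd$-value equal to the preceding $\vr$-value. This builds a valid tentacle from $v_0$ with first value $\vr(u_1)$ and terminal value $1$, and $\vd(v_0) = (A+\vr(u_1))/r_0$ is a positive integer, so we obtain an arithmetical structure on a broom with $m+1$ prongs (which is the star $\Star_{m+2}$ when $k=1$) whose critical group is $\mathcal{G}$. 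When $\mathcal{G}$ is trivial, the Laplacian arithmetical structure on any broom with $m+1$ prongs already has trivial critical group, so nothing more is needed. The one genuine obstacle throughout is the tension between the divisibility at the central vertex and the coprimality needed to close off the handle, which the choice $r_0 = g_s^2$ removes.
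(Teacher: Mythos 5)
Your proposal is correct and follows essentially the same route as the paper: set $r_0=g_s^2$, assign prong values $r_0/g_i$ together with a prong of value $1$, observe that the sum of prong values is $\equiv 1 \pmod{g_s}$ so the first handle value is coprime to $r_0$, and then run the descending recurrence $\vr(u_{j+1}) = -\vr(u_{j-1}) \bmod \vr(u_j)$ (your $\lceil a/b\rceil b - a$ is exactly this) until the handle terminates at a leaf of value $1$, finishing with Theorem~\ref{thm:criticalgroupstarlike} and cancellation of $(\ZZ/r_0\ZZ)^2$. The only differences are cosmetic, e.g.\ the paper pads the invariant factor list with initial $1$'s rather than singling out $m-s$ prongs of value $r_0$.
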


\begin{proof}
If $\mathcal{G}$ is the trivial group, we can take the Laplacian arithmetical structure on any broom graph with $m+1$ prongs, so assume $\mathcal{G}$ is nontrivial. Using the invariant factor decomposition, we have $\mathcal{G} \cong \ZZ/\alpha_1\ZZ \oplus \ZZ/\alpha_2\ZZ \oplus \cdots \oplus \ZZ/\alpha_m \ZZ$ for some positive integers~$\alpha_i$ with $\alpha_i\mid\alpha_{i+1}$ for all $i \in [m-1]$ and $\alpha_m \neq 1$. (If $\mathcal{G}$ has fewer than $m$~invariant factors, take some initial $\alpha_i$'s to be~$1$.) Let $T$ be a broom graph with $m+1$ prongs with central vertex~$v_0$, prongs $v_1, v_2, \dots, v_{m+1}$, and vertices $v_{m+2}, v_{m+3}, \dots, v_{n}$ along a tentacle, for some $n \geq m+2$ to be determined. Define $\vr(v_0) = \alpha_m^2$, $\vr(v_i) = \alpha_m^2/\alpha_i$ for each $i \in [m]$, and $\vr(v_{m+1})=1$. It is clear that each of $\vr(v_1), \vr(v_2), \dots, \vr(v_m)$ divides $\vr(v_0)$. Let $\vr(v_{m+2}) = -\sum_{i=1}^{m+1}\vr(v_i) \bmod{\vr(v_0)}$, which is nonzero because $\vr(v_i)$ is divisible by~$\alpha_m$ for all $i \in [m]$ but $\vr(v_{m+1})$ is not divisible by~$\alpha_m$. We can extend a tentacle in this direction by letting $\vr(v_{m+3}) = -\vr(v_{0}) \bmod{\vr(v_{m+2})}$ and successively letting $\vr(v_{j}) = -\vr(v_{j-2}) \bmod{\vr(v_{j-1})}$ for each $j \geq m+4$ as long as $\vr(v_{j-1}) \geq 2$. Since the $\vr$-values along the tentacle necessarily decrease, this process terminates and determines the length of the tentacle. Note that $\vr(v_{m+2})$ is one less than a multiple of~$\alpha_m$ and hence coprime to $\vr(v_0)$. This implies that $\vr(v_n)=1$; indeed, since the $\vr$-value at each vertex along this tentacle must be divisible by $\vr(v_n)$, if $\vr(v_n)$ were not~$1$ then $\vr(v_0)$ and $\vr(v_{m+2})$ would not be relatively prime. Theorem~\ref{thm:criticalgroupstarlike} gives that the critical group of this arithmetical structure is~$\mathcal{G}$.
\end{proof}

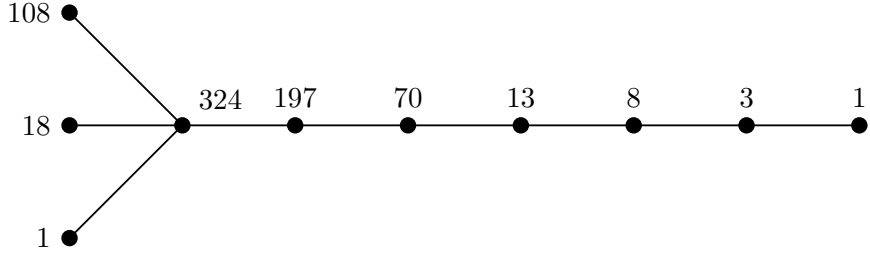
\begin{figure}
\centering
\begin{tikzpicture}[scale=1.5]
\node[main_node, label=above right:{$324$}] (c) at (0,0) {};
\node[main_node, label=left:{$108$}] (1) at (-1,1) {};
\node[main_node, label=left:{$18$}] (2) at (-1,0) {};
\node[main_node, label=left:{$1$}] (3) at (-1,-1) {};
\node[main_node, label=above:{$197$}] (4) at (1,0) {};
\node[main_node, label=above:{$70$}] (5) at (2,0) {};
\node[main_node, label=above:{$13$}] (6) at (3,0) {};
\node[main_node, label=above:{$8$}] (7) at (4,0) {};
\node[main_node, label=above:{$3$}] (8) at (5,0) {};
\node[main_node, label=above:{$1$}] (9) at (6,0) {};

\draw (1.center) -- (c.center) -- (2.center);
\draw (3.center) -- (c.center) -- (4.center);
\draw (4.center) -- (5.center) -- (6.center) -- (7.center) -- (8.center) -- (9.center);
\end{tikzpicture}
\caption{A broom graph with three prongs and the $\vr$-values of an arithmetical structure. The associated critical group is $\ZZ/3\ZZ \oplus \ZZ/18\ZZ$.}
\label{fig:broomgraph}
\end{figure}

As an example, to find an arithmetical structure on a broom graph with three prongs with critical group isomorphic to $\ZZ/3\ZZ \oplus \ZZ/18\ZZ$, the construction from this proof gives the graph and arithmetical structure in Figure~\ref{fig:broomgraph}. Indeed, we have $m=2$ with $\alpha_1=3$ and $\alpha_2=18$. Thus $\vr(v_0) = 18^2 = 324$, $\vr(v_1) = 18^2/3 = 108$, $\vr(v_2) = 18^2/18 = 18$, and $\vr(v_3)=1$, so $\vr(v_4) = -(108+18+1) \bmod{364} = 197$. The remaining tentacle is extended until it terminates at a vertex with an $\vr$-value of~$1$, resulting in the arithmetical structure shown. This type of extension along tentacles has previously been used in several papers, including \cite{L89,A20}.

Proposition~\ref{prop:broomgraphs} implies that every finite abelian group arises as the critical group of an arithmetical structure on a tree, and more specifically a broom graph. As another way to find an arithmetical structure on a tree with prescribed critical group, one could use Proposition~\ref{prop:broomgraphs} to build arithmetical structures on broom graphs with two prongs (i.e., bidents) whose critical groups are the invariant factors of the desired group. Since each of these arithmetical structures have $\vr$-values of~$1$ at two leaves, we can successively merge leaves with $\vr$-values of~$1$ to obtain an arithmetical structure on a tree that has no vertices of degree greater than~$3$. By Theorem~\ref{thm:mergingcriticalgroup}, this arithmetical structure has the desired critical group. While directly using Proposition~\ref{prop:broomgraphs} results in a tree with only one vertex of degree greater than~$2$, this second approach gives a tree with no vertex of degree greater than~$3$.

We can also try to produce arithmetical structures with prescribed critical groups on graphs with more specific properties, such as a given number of leaves and a given number of vertices of degree~$i$ for $i \geq 3$. The following theorem gives conditions under which this can be done, but it does not control the number of vertices of degree~$2$.

\begin{theorem}\label{thm:constructgroups}
Let $T$ be a finite tree, and let $\beta$ be an integer satisfying $0 \leq \beta \leq \iota(T)$. If $\mathcal{G}$ is a finite abelian group with $\ell(T)-2-\beta$ invariant factors, then there is a subdivision~$\widetilde{T}$ of~$T$ with $\iota\bigl(\widetilde{T}\bigr) = \beta$ and an arithmetical structure on~$\widetilde{T}$ whose critical group is~$\mathcal{G}$.
\end{theorem}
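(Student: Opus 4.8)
The plan is to combine the starlike decomposition machinery of Section~\ref{sec:boundinginvariantfactors} with the broom-graph construction of Proposition~\ref{prop:broomgraphs} and the edge-subdivision behavior of the splitting irregularity number from Lemma~\ref{lem:2matchingsubdivision}. Start by fixing a starlike decomposition $\{S_i\}_{i=1}^k$ of~$T$, with $\ell_i = \ell(S_i)$, arising from a sequence of $\iota(T)$ irregular splittings. By Proposition~\ref{prop:iotabasicproperties}(\ref{part:iotaandleaves}), $\sum_{i=1}^k(\ell_i-2) = \ell(T)-2-\iota(T)$. The hypothesis is that $\mathcal{G}$ has $\ell(T)-2-\beta$ invariant factors with $\beta \leq \iota(T)$, so $\mathcal{G}$ has $\sum_{i=1}^k(\ell_i-2) + (\iota(T)-\beta)$ invariant factors; we need to ``fatten'' the bound by exactly $\iota(T)-\beta$, and subdividing edges is the tool. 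By Lemma~\ref{lem:2matchingsubdivision}, each edge subdivision either leaves $\iota$ unchanged or decreases it by~$1$; iterating, we can find a subdivision~$\widetilde{T}$ with $\iota\bigl(\widetilde{T}\bigr) = \beta$. (One must check that the needed subdivisions exist: subdividing an edge incident to a degree-$\geq 3$ vertex that lies on a path between two high-degree vertices is the kind of move that converts an irregular splitting into a regular one. It suffices to argue that as long as $\iota > \beta$ there is still an irregular splitting available, and subdividing an appropriate edge of the corresponding piece reduces $\iota$ by one while preserving $\ell$.) Since $\ell\bigl(\widetilde{T}\bigr) = \ell(T)$, we get $\ell\bigl(\widetilde{T}\bigr) - 2 - \iota\bigl(\widetilde{T}\bigr) = \ell(T)-2-\beta$, which is exactly the number of invariant factors of~$\mathcal{G}$.

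Next, take a starlike decomposition $\{\widetilde{S}_i\}_{i=1}^{\widetilde{k}}$ of~$\widetilde{T}$ realizing $\iota\bigl(\widetilde{T}\bigr) = \beta$, and let $\widetilde{\ell}_i = \ell\bigl(\widetilde{S}_i\bigr)$, so $\sum_{i=1}^{\widetilde{k}}(\widetilde{\ell}_i - 2) = \ell(T)-2-\beta$ equals the number of invariant factors of~$\mathcal{G}$. Write $\mathcal{G} \cong \bigoplus_{j} \ZZ/\alpha_j\ZZ$ in invariant-factor form and partition the $\alpha_j$'s into blocks $\mathcal{G}_i$ with $\mathcal{G}_i$ having at most $\widetilde{\ell}_i - 2$ invariant factors (possible since the block sizes sum to the total number of invariant factors). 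On each piece $\widetilde{S}_i$, which is a starlike tree or path graph, we want an arithmetical structure with critical group $\mathcal{G}_i$; for the path-graph piece $\widetilde{S}_{\widetilde{k}}$ (if it is a path) we must have $\mathcal{G}_{\widetilde{k}}$ trivial, which we arrange. For starlike pieces, Proposition~\ref{prop:broomgraphs} builds an arithmetical structure on a broom graph with the prescribed critical group; using the extension operation of Remark~\ref{rem:extend}, we lengthen tentacles (keeping $\vr$-value~$1$ at the leaf where the next merge happens) so the piece becomes the correct $\widetilde{S}_i$ without changing the critical group. A subtlety: Proposition~\ref{prop:broomgraphs} produces a specific broom shape, so we need the piece $\widetilde{S}_i$ to be realizable as a broom with the right number of prongs after extension — this should be fine because a starlike tree with $\widetilde{\ell}_i$ leaves can always have all but one of its tentacles extended from a broom with $\widetilde{\ell}_i$ prongs, and $\mathcal{G}_i$ having at most $\widetilde{\ell}_i - 2$ invariant factors is exactly the hypothesis Proposition~\ref{prop:broomgraphs} needs with $m = \widetilde{\ell}_i - 1$.

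Finally, recompose $\widetilde{T}$ from $\{\widetilde{S}_i\}_{i=1}^{\widetilde{k}}$ by the merging construction of Proposition~\ref{prop:mergingconstruction}, always merging at the leaf of $\widetilde{S}_i$ (for $i < \widetilde{k}$) that we arranged to have $\vr$-value~$1$. At each merge step the $\vr$-values at the two merged vertices are coprime (one of them is~$1$), so Theorem~\ref{thm:mergingcriticalgroup} applies and the critical group of the composite is the direct sum of the two critical groups. Inductively, the arithmetical structure on~$\widetilde{T}$ has critical group $\bigoplus_i \mathcal{G}_i \cong \mathcal{G}$. Since $\iota\bigl(\widetilde{T}\bigr) = \beta$ by construction, this completes the proof.

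I expect the main obstacle to be the first paragraph: verifying that we can always perform the right sequence of edge subdivisions to bring $\iota(T)$ down to exactly $\beta$ (no more, no less), and doing so in a way compatible with the subsequent decomposition — i.e., that the subdivided edges can be chosen so that each use of Lemma~\ref{lem:2matchingsubdivision} lands in the ``$\iota$ drops by one'' case precisely $\iota(T)-\beta$ times and in the ``$\iota$ unchanged'' case for any other subdivisions we might be forced to make. One clean way to handle this is to identify, in a starlike decomposition with an irregular splitting $(S, T', v_s, v_t)$, an edge of $T'$ incident to $v_t$ whose subdivision makes that splitting regular without creating new irregularities, and to argue that this is always possible; then induct on $\iota(T) - \beta$.
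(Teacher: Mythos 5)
Your proposal follows essentially the same route as the paper: reduce $\iota$ to $\beta$ by subdividing edges, decompose into starlike pieces, realize each block of invariant factors on a broom graph via Proposition~\ref{prop:broomgraphs}, extend tentacles as in Remark~\ref{rem:extend}, and reassemble with Proposition~\ref{prop:mergingconstruction} and Theorem~\ref{thm:mergingcriticalgroup}. (The paper organizes this as an induction on the number of pieces in the $\beta=\iota(T)$ case and performs the subdivision step last, but that is cosmetic.)

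The one genuine gap is the step you yourself flag: showing that subdivisions can be chosen so that $\iota$ lands exactly on $\beta$. Your sketch --- locate an irregular splitting and subdivide an edge incident to $v_t$ so that this splitting becomes regular ``without creating new irregularities'' --- is not obviously airtight, since one would have to track how that subdivision interacts with the rest of the decomposition. The paper closes this more cleanly with a discrete intermediate-value argument: subdividing \emph{every} edge of~$T$ yields a tree $\overline{T}$ with no adjacent vertices of degree at least~$3$, hence $\iota\bigl(\overline{T}\bigr)=0$ by Proposition~5.3(\ref{part:iotapositivity}); since each single subdivision changes $\iota$ by $0$ or $-1$ (Lemma~\ref{lem:2matchingsubdivision}), performing the subdivisions one at a time must pass through every value between $\iota(T)$ and~$0$, so some partial subdivision $\widehat{T}$ has $\iota\bigl(\widehat{T}\bigr)=\beta$. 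With that substitution your argument is complete; the only other point worth making explicit is that the broom graphs from Proposition~\ref{prop:broomgraphs} may force one tentacle of a piece to be longer than in~$T$, so the final tree is a further subdivision along tentacles --- harmless, since such subdivisions change neither $\ell$ nor~$\iota$.
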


\begin{proof}
If $T$ is a path graph, then $\mathcal{G}$ can only be the trivial group, and any arithmetical structure on~$T$ has trivial critical group. Therefore we assume that $T$ is not a path graph. First suppose $\beta=\iota(T)$, and let $\{S_i\}_{i=1}^k$ be a starlike decomposition of~$T$. We proceed by induction on~$k$.

In the base case $k=1$, the tree~$T$ is starlike and $\iota(T)=0$, so $\beta=0$. Let $a_1, a_2, \dots, a_{\ell(T)}$ be the lengths of the tentacles of~$T$, and let $\mathcal{G}$ be a finite abelian group with at most $\ell(T)-2$ invariant factors. By Proposition~\ref{prop:broomgraphs}, there is an arithmetical structure on a broom graph with $\ell(T)-1$ prongs (and thus $\ell(T)$ leaves) with critical group~$\mathcal{G}$. Let $b$ be the length of the longest tentacle of this broom graph. By extending the tentacles of this broom graph and using the same $\vr$-values at the new vertices as described in Remark~\ref{rem:extend}, we obtain an arithmetical structure on a starlike tree~$\widetilde{T}$ that has tentacles of length $a_1, a_2, \dots, a_{\ell(T)-1}, \max\{a_{\ell(T)},b\}$ and where the critical group is still~$\mathcal{G}$. This~$\widetilde{T}$ can also be obtained from~$T$ by subdividing the $\ell(T)$-th tentacle of~$T$ (when $b>a_{\ell(T)}$) and satisfies $\iota\bigl(\widetilde{T}\bigr) = 0$, so the base case holds.

For the inductive step, suppose $k \geq 2$. Do a starlike splitting of~$T$ into $S_1$ with $\ell_1=\ell(S_1)$ and a tree~$T'$ with starlike decomposition $\{S_i\}_{i=2}^k$. Let $a_1, a_2, \dots, a_{\ell_1}$ be the lengths of the tentacles of~$S_1$, with $a_1=1$. If the splitting is regular, then $\iota(T)=\iota(T')$ and $\ell(T) = \ell_1+\ell(T')-2$; if the splitting is irregular, then $\iota(T)=\iota(T')+1$ and $\ell(T) = \ell_1+\ell(T')-1$. In either case, $\ell(T)-\ell_1-\iota(T) = \ell(T')-2-\iota(T')$. Let $\mathcal{G}$ be a finite abelian group with at most $\ell(T)-2-\iota(T)$ invariant factors. Write $\mathcal{G} \cong \mathcal{G}_1 \oplus \mathcal{G}_2$, where $\mathcal{G}_1$ has at most $\ell_1-2$ invariant factors and $\mathcal{G}_2$ has at most 
\[\ell(T)-2-\iota(T)-(\ell_1-2) = \ell(T)-\ell_1-\iota(T) = \ell(T')-2-\iota(T')\] 
invariant factors. By Proposition~\ref{prop:broomgraphs}, there is an arithmetical structure on a broom graph with $\ell_1-1$ prongs whose critical group is~$\mathcal{G}_1$. Let $b$ be the length of the longest tentacle of this broom graph. Moreover, this arithmetical structure has $\vr$-value~$1$ at one prong. Fixing this prong, extending the other tentacles of the broom graph, and using the same $\vr$-values at the new vertices as described in Remark~\ref{rem:extend}, we obtain an arithmetical structure on a graph~$\widetilde{S}_1$ that has tentacles of length $1, a_2, a_3, \dots, a_{\ell_{1}-1}, \max\{a_{\ell_1},b\}$, where the $\vr$-value at the leaf of the first tentacle is~$1$ and the critical group is~$\mathcal{G}_1$. This $\widetilde{S}_1$ can also be obtained by subdividing the $\ell(T)$-th tentacle of~$S_1$ (when $b>a_{\ell(T)}$). 

Since $T'$ has a starlike decomposition $\{S_i\}_{i=2}^k$ into $k-1$ trees, the inductive hypothesis gives that there is a subdivision~$\widetilde{T}'$ of~$T'$ on which there is an arithmetical structure with critical group~$\mathcal{G}_2$ and where $\iota\bigl(\widetilde{T}'\bigr)=\iota(T')$. By merging the leaf of the first tentacle of~$\widetilde{S}_1$ (where the $\vr$-value is~$1$) with the vertex of~$\widetilde{T}'$ that corresponds to the vertex of~$T'$ where the original splitting occurred, we get a graph~$\widetilde{T}$. Since $\widetilde{S}_1$ is a subdivision of~$S_1$ and $\widetilde{T}'$ is a subdivision of~$T'$, we have that $\widetilde{T}$ is a subdivision of~$T$. Since $a_1=1$, the starlike splitting of~$\widetilde{T}$ into $\widetilde{S}_1$ and~$\widetilde{T}'$ is regular if and only if the starlike splitting of~$T$ into $S_1$ and~$T'$ is regular. Since $\iota\bigl(\widetilde{T}'\bigr) = \iota(T')$, this means $\iota\bigl(\widetilde{T}\bigr) = \iota(T)$. By scaling the arithmetical structure on~$\widetilde{S}_1$ and merging it with the arithmetical structure on~$\widetilde{T}'$ as in Proposition~\ref{prop:mergingconstruction}, we get an arithmetical structure on~$\widetilde{T}$. By Theorem~\ref{thm:mergingcriticalgroup}, the critical group of this arithmetical structure on $\widetilde{T}$ is $\mathcal{G}_1 \oplus \mathcal{G}_2$, completing the proof in the case when $\beta=\iota(T)$.

Now suppose $\beta<\iota(T)$. If we subdivide all edges of~$T$ to obtain a tree $\overline{T}$, we will have $\iota\bigl(\overline{T}\bigr)=0$ because $\overline{T}$ has no adjacent vertices of degree at least~$3$. By Lemma~\ref{lem:2matchingsubdivision}, each of these subdivisions will decrease the splitting irregularity number by at most~$1$, so by doing only some subdivisions we can obtain a tree~$\widehat{T}$ with $\iota\bigl(\widehat{T}\bigr)=\beta$. Applying the above argument to~$\widehat{T}$ yields the result.
\end{proof}

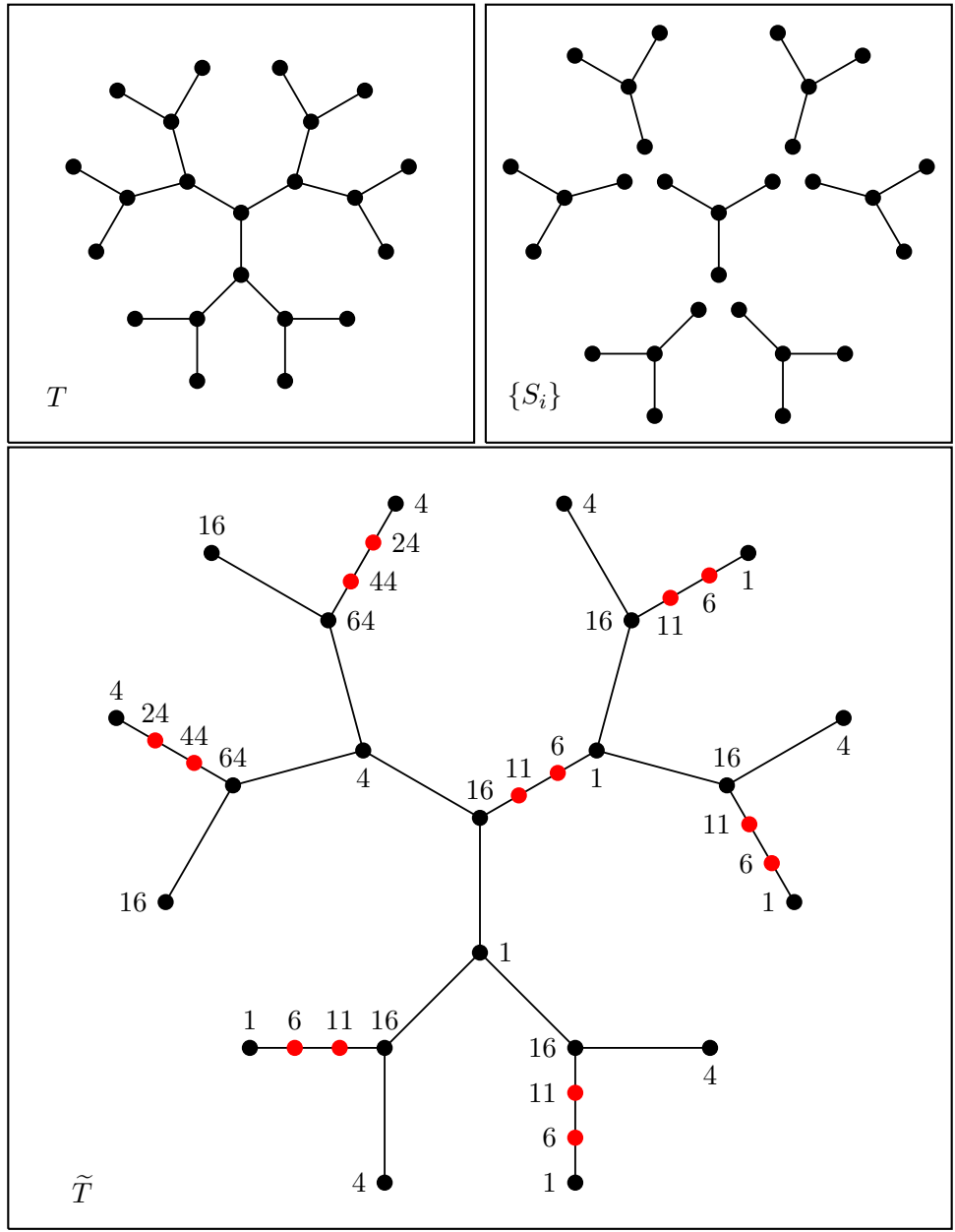
\begin{figure}
\centering
\begin{tikzpicture}[scale=.42]
\node at (-4.25,-4,5) {$T$};
\draw (-7.5,-7.4) -- (-7.5,6.7) -- (7.5,6.7) -- (7.5,-7.4) -- (-7.5,-7.4);
\node[main_node] (1) at (0,0) {};
\node[main_node] (2) at (30:2) {};
\node[main_node] (3) at (150:2) {};
\node[main_node] (4) at (270:2) {};
\node[main_node] (5) at ($(2)+(-15:2)$) {};
\node[main_node] (6) at ($(2)+(75:2)$) {};
\node[main_node] (7) at ($(3)+(195:2)$) {};
\node[main_node] (8) at ($(3)+(105:2)$) {};
\node[main_node] (9) at ($(4)+(225:2)$) {};
\node[main_node] (10) at ($(4)+(-45:2)$) {}; 
\node[main_node] (11) at ($(5)+(30:2)$) {};
\node[main_node] (12) at ($(5)+(-60:2)$) {};
\node[main_node] (13) at ($(6)+(30:2)$) {};
\node[main_node] (14) at ($(6)+(120:2)$) {};
\node[main_node] (15) at ($(8)+(60:2)$) {};
\node[main_node] (16) at ($(8)+(150:2)$) {};
\node[main_node] (17) at ($(7)+(150:2)$) {};
\node[main_node] (18) at ($(7)+(240:2)$) {}; 
\node[main_node] (19) at ($(10)+(0:2)$) {};
\node[main_node] (20) at ($(10)+(270:2)$) {}; 
\node[main_node] (21) at ($(9)+(180:2)$) {};
\node[main_node] (22) at ($(9)+(270:2)$) {};

\draw (1.center) -- (2.center);
\draw (1.center) -- (3.center);
\draw (1.center) -- (4.center);
\draw (2.center) -- (5.center);
\draw (2.center) -- (6.center);
\draw (3.center) -- (7.center);
\draw (3.center) -- (8.center);
\draw (4.center) -- (9.center);
\draw (4.center) -- (10.center);
\draw (5.center) -- (11.center);
\draw (5.center) -- (12.center);
\draw (6.center) -- (13.center);
\draw (6.center) -- (14.center);
\draw (8.center) -- (15.center);
\draw (8.center) -- (16.center);
\draw (7.center) -- (17.center);
\draw (7.center) -- (18.center);
\draw (10.center) -- (19.center);
\draw (10.center) -- (20.center);
\draw (9.center) -- (21.center);
\draw (9.center) -- (22.center);
\end{tikzpicture}
\begin{tikzpicture}[scale=.42]
\node at (-3.4,-4,6) {$\{S_i\}_{i=1}^7$};
\draw (-7.5,-7.4) -- (-7.5,6.7) -- (7.5,6.7) -- (7.5,-7.4) -- (-7.5,-7.4);
\node[main_node] (1) at (0,0) {};
\node (2) at (30:2.75) {};
\node[main_node] (2a) at ($(2)+(210:0.75)$) {};
\node[main_node] (2b) at ($(2)+(90:0.75)$) {};
\node[main_node] (2c) at ($(2)+(-30:0.75)$) {};
\node (3) at (150:2.75) {};
\node[main_node] (3a) at ($(3)+(210:0.75)$) {};
\node[main_node] (3b) at ($(3)+(90:0.75)$) {};
\node[main_node] (3c) at ($(3)+(-30:0.75)$) {};
\node (4) at (270:2.75) {};
\node[main_node] (4a) at ($(4)+(210:0.75)$) {};
\node[main_node] (4b) at ($(4)+(90:0.75)$) {};
\node[main_node] (4c) at ($(4)+(-30:0.75)$) {};
\node[main_node] (5) at ($(2c)+(-15:2)$) {};
\node[main_node] (6) at ($(2b)+(75:2)$) {};
\node[main_node] (7) at ($(3a)+(195:2)$) {};
\node[main_node] (8) at ($(3b)+(105:2)$) {};
\node[main_node] (9) at ($(4a)+(225:2)$) {};
\node[main_node] (10) at ($(4c)+(-45:2)$) {}; 
\node[main_node] (11) at ($(5)+(30:2)$) {};
\node[main_node] (12) at ($(5)+(-60:2)$) {};
\node[main_node] (13) at ($(6)+(30:2)$) {};
\node[main_node] (14) at ($(6)+(120:2)$) {};
\node[main_node] (15) at ($(8)+(60:2)$) {};
\node[main_node] (16) at ($(8)+(150:2)$) {};
\node[main_node] (17) at ($(7)+(150:2)$) {};
\node[main_node] (18) at ($(7)+(240:2)$) {}; 
\node[main_node] (19) at ($(10)+(0:2)$) {};
\node[main_node] (20) at ($(10)+(270:2)$) {}; 
\node[main_node] (21) at ($(9)+(180:2)$) {};
\node[main_node] (22) at ($(9)+(270:2)$) {};

\draw (1.center) -- (2a.center);
\draw (1.center) -- (3c.center);
\draw (1.center) -- (4b.center);
\draw (2c.center) -- (5.center);
\draw (2b.center) -- (6.center);
\draw (3a.center) -- (7.center);
\draw (3b.center) -- (8.center);
\draw (4a.center) -- (9.center);
\draw (4c.center) -- (10.center);
\draw (5.center) -- (11.center);
\draw (5.center) -- (12.center);
\draw (6.center) -- (13.center);
\draw (6.center) -- (14.center);
\draw (8.center) -- (15.center);
\draw (8.center) -- (16.center);
\draw (7.center) -- (17.center);
\draw (7.center) -- (18.center);
\draw (10.center) -- (19.center);
\draw (10.center) -- (20.center);
\draw (9.center) -- (21.center);
\draw (9.center) -- (22.center);
\end{tikzpicture}

\begin{tikzpicture}[scale=.911]
\node at (-6.25,-5.25) {$\widetilde{T}$};
\draw (-7,-6.1) -- (-7,5.5) -- (7,5.5) -- (7,-6.1) -- (-7,-6.1);
\node[main_node, label=above:{$16$}] (1) at (0,0) {};
\node[main_node, label=below:{$1$}] (2) at (30:2) {};
\node[main_node, label=below:{$4$}] (3) at (150:2) {};
\node[main_node, label=right:{$1$}] (4) at (270:2) {};
\node[main_node, label=above:{$16$}] (5) at ($(2)+(-15:2)$) {};
\node[main_node, label=left:{$16$}] (6) at ($(2)+(75:2)$) {};
\node[main_node, label=above:{$64$}] (7) at ($(3)+(195:2)$) {};
\node[main_node, label=right:{$64$}] (8) at ($(3)+(105:2)$) {};
\node[main_node, label=above:{$16$}] (9) at ($(4)+(225:2)$) {};
\node[main_node, label=left:{$16$}] (10) at ($(4)+(-45:2)$) {}; 
\node[main_node, label=below:{$4$}] (11) at ($(5)+(30:2)$) {};
\node[main_node, label=left:{$1$}] (12) at ($(5)+(-60:2)$) {};
\node[main_node, label=below:{$1$}] (13) at ($(6)+(30:2)$) {};
\node[main_node, label=right:{$4$}] (14) at ($(6)+(120:2)$) {};
\node[main_node, label=right:{$4$}] (15) at ($(8)+(60:2)$) {};
\node[main_node, label=above:{$16$}] (16) at ($(8)+(150:2)$) {};
\node[main_node, label=above:{$4$}] (17) at ($(7)+(150:2)$) {};
\node[main_node, label=left:{$16$}] (18) at ($(7)+(240:2)$) {}; 
\node[main_node, label=below:{$4$}] (19) at ($(10)+(0:2)$) {}; 
\node[main_node, label=left:{$1$}] (20) at ($(10)+(270:2)$) {}; 
\node[main_node, label=above:{$1$}] (21) at ($(9)+(180:2)$) {};
\node[main_node, label=left:{$4$}] (22) at ($(9)+(270:2)$) {};

\draw (1.center) -- (2.center);
\draw (1.center) -- (3.center);
\draw (1.center) -- (4.center);
\draw (2.center) -- (5.center);
\draw (2.center) -- (6.center);
\draw (3.center) -- (7.center);
\draw (3.center) -- (8.center);
\draw (4.center) -- (9.center);
\draw (4.center) -- (10.center);
\draw (5.center) -- (11.center);
\draw (5.center) -- (12.center);
\draw (6.center) -- (13.center);
\draw (6.center) -- (14.center);
\draw (8.center) -- (15.center);
\draw (8.center) -- (16.center);
\draw (7.center) -- (17.center);
\draw (7.center) -- (18.center);
\draw (10.center) -- (19.center);
\draw (10.center) -- (20.center);
\draw (9.center) -- (21.center);
\draw (9.center) -- (22.center);
\node[main_node, red, label=above:{$11$}] (2a) at (30:2/3) {};
\node[main_node, red, label=above:{$6$}] (2b) at (30:2*2/3) {};
\node[main_node, red, label=left:{$11$}] (12a) at ($(5)+(-60:2/3)$) {};
\node[main_node, red, label=left:{$6$}] (12b) at ($(5)+(-60:2*2/3)$) {};
\node[main_node, red, label=below:{$11$}] (13a) at ($(6)+(30:2/3)$) {};
\node[main_node, red, label=below:{$6$}] (13b) at ($(6)+(30:2*2/3)$) {};
\node[main_node, red, label=right:{$44$}] (15a) at ($(8)+(60:2/3)$) {};
\node[main_node, red, label=right:{$24$}] (15b) at ($(8)+(60:2*2/3)$) {};
\node[main_node, red, label=above:{$11$}] (21a) at ($(9)+(180:2/3)$) {};
\node[main_node, red, label=above:{$6$}] (21b) at ($(9)+(180:2*2/3)$) {};
\node[main_node, red, label=left:{$11$}] (18a) at ($(10)+(-90:2/3)$) {};
\node[main_node, red, label=left:{$6$}] (18b) at ($(10)+(-90:2*2/3)$) {}; 
\node[main_node, red, label=above:{$44$}] (17a) at ($(7)+(150:2/3)$) {};
\node[main_node, red, label=above:{$24$}] (17b) at ($(7)+(150:2*2/3)$) {};
\end{tikzpicture}
\caption{A tree~$T$, a starlike decomposition $\{S_i\}_{i=1}^7$ of~$T$, and the $\vr$-values of an arithmetical structure on~$\widetilde{T}$, a subdivision of~$T$, that has critical group $(\ZZ/4\ZZ)^7$.\label{fig:constructgroupsexample}}
\end{figure}

We also record a special case of Theorem~\ref{thm:constructgroups} that follows when $\beta=0$.
\begin{corollary}\label{cor:constructgroups}
Let $T$ be a finite tree. If $\mathcal{G}$ is a finite abelian group with at most $\ell(T)-2$ invariant factors, then there is an arithmetical structure on a subdivision of~$T$ whose critical group is~$\mathcal{G}$. 
\end{corollary}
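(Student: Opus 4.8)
The plan is to derive this as the special case $\beta=0$ of Theorem~\ref{thm:constructgroups}. For any finite tree~$T$ we have $\iota(T)\geq0$, so the hypothesis $0\leq\beta\leq\iota(T)$ of that theorem holds with $\beta=0$. The only point that needs attention is the phrasing: Theorem~\ref{thm:constructgroups} is stated for a group with exactly $\ell(T)-2-\beta$ invariant factors, whereas here $\mathcal{G}$ is allowed at most $\ell(T)-2$ invariant factors. Following the convention used in Proposition~\ref{prop:broomgraphs}, I would observe that this is not a genuine restriction: if $\mathcal{G}$ has $j<\ell(T)-2$ nontrivial invariant factors, its invariant factor decomposition can be written as $\ZZ/\alpha_1\ZZ\oplus\cdots\oplus\ZZ/\alpha_{\ell(T)-2}\ZZ$ with the first $\ell(T)-2-j$ of the $\alpha_i$ equal to~$1$, so that $\mathcal{G}$ counts as having $\ell(T)-2$ invariant factors in the relevant sense.

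With this understood, applying Theorem~\ref{thm:constructgroups} with $\beta=0$ yields a subdivision~$\widetilde{T}$ of~$T$ — in fact one with $\iota\bigl(\widetilde{T}\bigr)=0$, although that refinement is not needed here — together with an arithmetical structure on~$\widetilde{T}$ whose critical group is~$\mathcal{G}$. That is precisely the statement of the corollary, so nothing further is required.

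I do not anticipate any real obstacle: all of the substantive work lies in Theorem~\ref{thm:constructgroups} itself, and hence in Proposition~\ref{prop:broomgraphs} and Theorem~\ref{thm:mergingcriticalgroup}. One could instead try to argue directly from Proposition~\ref{prop:broomgraphs} and the merging construction sketched in the discussion preceding the corollary, but that route produces \emph{some} tree realizing~$\mathcal{G}$ rather than a subdivision of the prescribed tree~$T$, so it would not prove the corollary as stated.
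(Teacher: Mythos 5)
Your proposal is correct and matches the paper exactly: the paper records this corollary as the special case $\beta=0$ of Theorem~\ref{thm:constructgroups}, which is precisely your argument. Your remark about padding with trivial invariant factors to reconcile ``at most'' with the theorem's phrasing is consistent with how the theorem's proof itself treats the invariant factor count, so no further justification is needed.
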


The following example, shown in Figure~\ref{fig:constructgroupsexample}, illustrates Theorem~\ref{thm:constructgroups}.

\begin{example}
Let $T$ be the tree shown in the top left of Figure~\ref{fig:constructgroupsexample}, which has $\iota(T)=3$ and $\ell(T)=12$. By Theorem~\ref{thm:constructgroups} with $\beta=\iota(T)$, for any group with at most $7$~invariant factors, there is a subdivision~$\widetilde{T}$ of~$T$ with $\iota\bigl(\widetilde{T}\bigr)=\iota(T)$ and an arithmetical structure on~$\widetilde{T}$ that has this group as critical group. Let $\mathcal{G} = (\ZZ/4\ZZ)^7$. Following the proof of Theorem~\ref{thm:constructgroups}, we find a starlike decomposition $\{S_i\}_{i=1}^7$ of~$T$, shown in the top right of Figure~\ref{fig:constructgroupsexample}. In this particular example, every starlike decomposition will be the same, up to reordering. We take each~$S_i$, which in this example are star graphs with three leaves, and subdivide them to obtain graphs having arithmetical structures with critical group $\ZZ/4\ZZ$. In this case, each $3$-star becomes a broom graph with two prongs and six vertices, with a center $\vr$-value of~$16$, $\vr$-values of $4$ and~$1$ at the prongs, and $\vr$-values of $11$, $6$, and~$1$ along the remaining tentacle. Merging these gives the tree~$\widetilde{T}$ and arithmetical structure shown at the bottom of Figure~\ref{fig:constructgroupsexample}, with $\vr$-values scaled in accordance with Proposition~\ref{prop:mergingconstruction} when necessary. The resulting tree~$\widetilde{T}$ is a subdivision of~$T$ with $\iota\bigl(\widetilde{T}\bigr)=3$, and the resulting arithmetical structure has critical group $\mathcal{G}=(\ZZ/4\ZZ)^7$. 
\end{example}

In this example, and in general, one cannot subdivide arbitrary edges of the tree while preserving the splitting irregularity number, so it is not immediately clear where to subdivide without first finding a starlike decomposition of the tree.

\begin{remark}
The results of this section imply that for any finite abelian group~$\mathcal{G}$ there is an arithmetical structure on a tree with critical group~$\mathcal{G}$, but the same thing is also true for some other classes of graphs. Recall that a graph~$G$ is unicyclic if $\abs{E(G)} = \abs{V(G)}$, and more generally is $c$-cyclic if $\abs{E(G)} = \abs{V(G)}+c-1$. Since there is an arithmetical structure on a cyclic graph with trivial critical group and an $\vr$-value of~$1$ at one of its vertices, Proposition~\ref{prop:broomgraphs} and Theorem~\ref{thm:mergingcriticalgroup} allow us to construct an arithmetical structure on a $c$-cyclic graph whose critical group is~$\mathcal{G}$, for any positive integer~$c$.
\end{remark}

\section*{Acknowledgments}

We would like to thank Dino Lorenzini for helpful comments on a previous version of this paper and ICERM for their support through the Collaborate@ICERM program. Alexander Diaz-Lopez’s research is supported by National Science Foundation grant DMS-2211379. Joel Louwsma was partially supported by two Niagara University Summer Research Awards.

\bibliographystyle{amsplain}
\bibliography{ChipFiringTrees}

\end{document}